\newcommand{\R}{{\mathbb R}}
\newcommand{\N}{{\mathbb N}}
\newcommand{\T}{{\mathbb T}}
\newcommand{\Z}{{\mathbb Z}}
\newcommand{\C}{{\mathbb C}}
\newcommand{\K}{{\mathcal{K}}}
\newcommand{\W}{{\mathcal{W}}}
\newcommand{\Or}{\mathcal{O}}
\newcommand{\bmi}{\bm{i}}
\def\ee{\mathrm{e}}
\def\deltalambda{{\Delta\lambda}}
\newcommand{\err}{\mathop{\rm err}\nolimits}
\theoremstyle{definition}
\newtheorem{Def}{Definition}[section]
\newtheorem{remark}[Def]{Remark}
\theoremstyle{plain}
\newtheorem{lemma}{Lemma}[section]
\theoremstyle{remark}
\newtheorem{algo}{Algorithm}[subsection]
\begin{document}

\title[Flow map methods for quasi-periodic systems]{
Flow map parameterization methods for invariant tori in Quasi-Periodic Hamiltonian systems
}
\author{Álvaro Férnandez}
\address{Departament de Matem\`atiques i Inform\`atica, Universitat de Barcelona,
Gran Via 585, 08007 Barcelona, Spain.}
\email[Corresponding author]{alvaro.fernandez@ub.edu}
\author{Àlex Haro}
\address{Departament de Matem\`atiques i Inform\`atica
Universitat de Barcelona, Gran Via 585, 08007 Barcelona, Spain \& Centre de Recerca Matemàtica, Edifici C, Campus Bellaterra, 08193 Bellaterra (Barcelona), Spain.}
\email{alex@maia.ub.es}
\author[J.M. Mondelo]{J.M. Mondelo}
\address{Departament de Matemàtiques \& CERES-IEEC, Universitat Autònoma de Barcelona, Av.~de l'Eix Central, Edifici C, 08193 Bellaterra (Barcelona), Spain.}
\email{jmm@mat.uab.cat}

\thanks{This work has been supported by the Spanish grants MCINN-AEI PID2020-118281GB-C31 and PID2021-125535NB-I00,
the Catalan grant 2017 SGR 1374, by the Spanish State Research Agency, through the
Severo Ochoa and María de Maeztu Program for Centers and Units of
Excellence in R\&D (CEX2020-001084-M), the European Union's Horizon 2020 research and innovation program
under the Marie Sk\l{}odowska-Curie grant agreement No 734557, the Secretariat for Universities and Research
of the Ministry of Business and Knowledge of the Government of Catalonia, and by the European Social Fund.
}

\date{}

\begin{abstract}
The purpose of this paper is to present a method to compute parameterizations of invariant tori and bundles in non-autonomous quasi-periodic Hamiltonian systems. We generalize flow map parameterization methods to the quasi-periodic setting. To this end, we introduce the notion of fiberwise isotropic tori and sketch definitions and results on fiberwise symplectic deformations and their moment maps. These constructs are vital to work in a suitable setting and lead to the proofs of ``magic cancellations" that guarantee the existence of solutions of cohomological equations.

We apply our algorithms in the Elliptic Restricted Three Body Problem and compute non-resonant $3$-dimensional invariant tori and their invariant bundles around the $L_1$ point. 
\smallskip

\noindent{{\bf Keywords}. Invariant tori; quasi-periodic Hamiltonian systems; parameterization method; ERTBP; KAM theory.} 
\end{abstract}

\maketitle

\section{Introduction}
The study of invariant manifolds constitutes the center piece in understanding dynamical systems. It is a rather natural first approach ---and often the only hope--- to unveil the qualitative behavior of a time-evolving system. Besides the intrinsic interest of invariant manifolds, such structures have found their ``real-world'' analogues in celestial mechanics, astrodynamics and mission design, plasma physics, semi-classical quantum theory, magnetohydrodynamics, neuroscience, and the list goes on. In particular, celestial mechanics has a long-held tradition in considering such objects, especially periodic orbits and invariant tori carrying quasi-periodic motion, and is in fact one of the main fields that promoted their rigorous and numerical study. Astronomers have used perturbative techniques for centuries in the form of formal series of dubious convergence due to the existence of the so-called small divisors problems. Progress had to wait until the pioneering works \cite{Kolmogorov54,Arnold63a,Moser62} that gave birth to the celebrated KAM theory and the subsequent proofs on the persistence of invariant tori under small enough perturbations of integrable systems.

In later works, KAM theory was carried out far from the perturbative regime, without the need of action-angle variables \cite{GonzalezJLV05}, and rigorous results and algorithms were developed for hyperbolic invariant tori and their whiskers \cite{HaroL06b,HaroL06a,HaroL07}. The methodology is based on the solution of functional equations in the spirit of the parametrization method introduced in \cite{CabreFL03a,CabreFL03b,CabreFL05} for invariant manifolds of fixed points. For the solution of the functional equations, the parameterization method constructs a Newton-like sequence of functions in a scale of Banach spaces that converges to the solution starting from an initial approximation. The results are stated following a posteriori formulation: if there is an approximate solution of the invariance equation satisfying some non-degeneracy conditions, then there exist a true solution nearby. Rather rapidly, the parameterization method lead to a plethora of rigorous results \cite{CallejaCL13a,CanadellH17a,FontichLS09,GonzalezHL13,HuguetLS12,LuqueV11} and numerical explorations \cite{CallejaCGLl22,callejanontwist,CanadellH17b,gonzalez2022,HM21,KumarALl22}, in different contexts, to name a few. See \cite{book_alex} for a  survey.

Our objective is to design a flow map parameterization method in the spirit of \cite{HM21} to compute non-resonant partially hyperbolic invariant tori and their invariant bundles in quasi-periodic Hamiltonian systems. Time-dependent Hamiltonians appear naturally in astrodynamics and celestial mechanics as improvements of the Circular Restricted Three Body Problem (CRTBP). There is a hierarchy of models of increasing complexity that provide a closer behavior to the real solar system dynamics, which is generally accepted to be given by the Newtonian attraction of the celestial bodies described according to the JPL ephemeris \cite{dei2017advantages}. The improvements of the CRTBP include the Elliptic Restricted Three Body Problem, the Quasi Bicircular problem in the thesis of \cite{andreu1998quasi}, and the frequency models of \cite{gomez2002solar}, to mention a few. These improved models enable the consideration of advanced space missions concepts and can serve as a seed to compute bounded motion for several decades in the JPL ephemeris model \cite{andreu1999,andreu2003}. The application of the parameterization method ideas to non-autonomous complex models in astrodynamics and celestial mechanics is our main motivation.

Flow maps methods allow the reduction of the torus dimension to be computed by one  \cite{GomezM01,HM21}. The operation count to manipulate functions grows exponentially with the number of variables of the parameterization. Therefore, the reduction allowed by flow map methods is computationally advantageous. This comes at the expense of numerical integration, which can be easily parallelized. A similar argument can be made for using the parameterization method instead of following a normal form approach. Normal forms require to manipulate functions of the same number of variables as the dimension of the phase space. Instead, the parameterization method requires to manipulate functions with as many variables as the dimension of the invariant manifold.

The parameterization method leads to very efficient algorithms. If the parameterization is approximated with either $N$ sample values in a regular grid or $N$ Fourier coefficients, the Newton-like step requires $\Or(N)$ storage and $\Or(N\log N)$ operations as opposed to $\Or(N^2)$ storage and $\Or(N^3)$ operations of classical Newton methods applied directly to discretized versions of the functional equations. The gain in efficiency comes from the geometrical properties of the phase space (i.e.~symplectic geometry), the systems (i.e.~exact symplecticity) and the tori (i.e.~isotropicity, Lagrangianity). These properties lead to a Newton step that is decomposed into substeps that require $\Or(N)$ operations either in grid space or Fourier space where the $\Or(N\log N)$ cost comes from the FFT performed in order to switch representation spaces. See e.g. \cite{book_alex} and references therein.

An important ingredient in the design of algorithms based on the parameterization method is the presence of ``magic cancellations''.  These cancellations come as well from the geometrical properties and they allow the solution of the so-called small divisors equations. Such equations appear naturally in the algorithm and are the hallmark of KAM theory. Even though in this paper we will not provide a convergence proof of the algorithm, we will provide all the elements to produce such a proof with KAM techniques. See e.g. \cite{GonzalezJLV05, FiguerasHL17, FontichLS09, book_alex, LuqueV11}. We emphasize that these works, as well as our prequel \cite{HM21}, hold for autonomous Hamiltonian systems. A standard practice when working with non-autonomous systems is to consider an extended phase space by defining extra angle variables and conjugated fictitious variables to make the system autonomous. Although this is mathematically equivalent, this incurs in increasing the dimension of the phase space which leads to less efficient algorithms. We avoid this practice by considering appropriate functional equations. Another standard practice to
deal with non-autonomous systems, in particular periodic, is to use flow map methods for a time-$T$ map that make the discrete system autonomous, i.e., choosing for $T$ the period of the system. Instead, we take $T$ as one of the internal periods of the torus sought for. This enables continuation from an autonomous approximation of quasi-periodic models starting directly from tori of the autonomous approximation computed via flow map methods.




In order to generalize the parameterization method to the quasi-periodic Hamiltonian setting, we do not make use of an autonomous reformulation. Instead, we introduce new geometrical machinery that generalizes the ideas of \cite{HM21} and allow us to prove the ``magic cancellations'' in the quasi-periodic context. In geometrical jargon, we consider the extended phase space as a symplectic bundle, the invariant tori are fiberwise isotropic, and we sketch definitions and results on fiberwise symplectic deformations. We also construct the corresponding so-called moment maps which can be seen as generating Hamiltonians of the deformations.

The geometrical constructs are crucial for our method and could be of independent interest. They are also crucial for the eventual proofs on convergence.  In spite of their importance, we present them in the appendices for ease of exposition of the method.



\section{Setting}
\label{sec:setting}
We assume all objects to be sufficiently smooth, even real analytic.

\subsection{Hamiltonian systems}
Let us assume we have an {\em exact symplectic form} $\boldsymbol{\omega}$ on an open set $U\subset\R^{2n}$ endowing $U$ with an exact symplectic structure and let $\Omega:U\to\R^{2n\times2n}$ be the matrix representation of $\boldsymbol{\omega}$. Let us also assume we have a smooth function $H:U\times\R\to\R$ on the extended phase space that depends explicitly on time. Since the symplectic form is bilinear and non-degenerate, $\boldsymbol{\omega}$ sets a fiberwise linear isomorphism between 1-forms and vector fields. Therefore, there is a unique vector field $X_H:U\times\R\to \R^{2n}$ obtained from the differential of the Hamiltonian as
\begin{equation}\label{eq:hamsys}
   \dot z=X_H(z,t):=\Omega(z)^{-1} {\rm D}_z H(z,t)^\top,
\end{equation}
where $X_H$ is the {\em Hamiltonian vector field} of the {\em non-autonomous Hamiltonian system} generated by $H$.

In the present work, we focus on a subset of time-dependent Hamiltonians that frequently arise in physical models such as those in celestial mechanics. We will consider the quasi-periodic case where the time-dependent Hamiltonian is a quasi-periodic function with frequencies $\hat{\alpha}\in\R^\ell$. Let $\T^m=\R^m/\Z^m$ be the standard $m$-torus. We can define the angle variables $\varphi:=\hat\alpha t\in\T^\ell$, and with a slight abuse of notation, we consider the Hamiltonian as a quasi-periodic smooth function $H:U\times\T^\ell\to\R$. Analogously, we consider the corresponding Hamiltonain vector field $X_H:U\times\T^\ell\to\R^{2n}$. Then, on the extended phase space, we have the following vector field $\tilde X_H:U\times \T^\ell \to \R^{2n}\times\R^\ell $ constructed as
\begin{equation}
\label{eq:diff_eq}
\begin{pmatrix}
\dot{z} \\
\dot{\varphi}
\end{pmatrix} = \tilde X_H(z,\varphi) = \begin{pmatrix}
X_H(z,\varphi)\\
\hat{\alpha}
\end{pmatrix}.
\end{equation}
Note that when $\varphi\in\T$, our system reduces to the periodic case. We look at $\tilde X_H$ as a quasi-periodic vector field defined on the total space $U\times\T^\ell$ of a bundle with base $\T^\ell$. On each fiber $\mathcal{F}_\varphi = \pi^{-1}(\varphi)$, where $\pi$ is the bundle projection, we have a symplectic vector structure and a Hamiltonian vector field $X_H(\cdot,\varphi):U\to\R^{2n}$. Note that all these vector fields are coupled by the phase equation $\dot\varphi=\hat\alpha$ according to \eqref{eq:diff_eq}. The vector field $\tilde X_H$ is then a fiberwise Hamiltonian vector field---we will see our objects inherit this fiberwise structure in different contexts.

We denote the flow associated to $\tilde X_H$ by $\tilde\phi:D\subset \R \times U\times\T^\ell\to U\times\T^\ell$, where $D$ is the open set domain of definition of the flow. Then, for every $(z,\varphi)$, we have the maximal interval of existence $I_{z,\varphi}$ such that the domain of the flow can be expressed as
\[
D=\{(t,z,\varphi)\in \R\times U\times\T^\ell\mid t\in I_{z,\varphi}  \}.
\]
The flow adopts the form
\begin{equation}
\label{eq:ext_flow}
  \tilde{\phi}(t,z,\varphi)=\begin{pmatrix}\phi(t,z,\varphi)\\\varphi + \hat{\alpha}t \end{pmatrix},
\end{equation}
where the evolution operator $\phi$ satisfies
\begin{gather*}
    \frac{d}{dt}  \phi(t,z,\varphi) = X_H\big(\phi(t,z,\varphi),\varphi + \hat \alpha t\big),\\
    \phi(0,z,\varphi)=z.
\end{gather*}
From now on, we will adopt the standard notations 
\begin{equation*}
    \begin{gathered}
    \tilde\phi(t,z,\varphi) = \tilde\phi_t(z,\varphi),\\
    \phi(t,z,\varphi) = \phi_t(z,\varphi).
    \end{gathered}
\end{equation*}
Since $\boldsymbol{\omega}$ is exact, the matrix representation of the 2-form is given by
\begin{equation}
\label{eq:exactness}
\Omega(z)={\rm D} a(z)^\top-{\rm D} a(z),
\end{equation}
where $a:U\to\R^{2n}$ and $a(z)^\top$ is the matrix representation at $z\in U$ of the {\em action form} $\bm{\alpha}$ defined on $U$. For fixed $t$, $\phi_t$ is fiberwise exact symplectic: for each $\varphi\in\T^\ell$, $\phi_t$ satisfies symplecticity,
\begin{equation}
\label{eq:symplecticity}
  	{\rm D}_z\phi_t(z,\varphi)^\top \Omega\bigl(\phi_t(z,\varphi)\bigr) {\rm D}_z\phi_t(z,\varphi)= \Omega(z),  
\end{equation}
 and exactness,
\[
	a\bigl(\phi_t(z,\varphi)\bigr)^\top {\rm D}_z\phi_t(z,\varphi) - a(z)^\top = {\rm D}_z p_t(z,\varphi),
\]
for some {\em primitive function} $p_t:U\times\T^\ell\to \R$, see Appendix \ref{ap:primitive} for a explicit form of $p_t$. The existence of the primitive function for the fiberwise exact symplectomorphism $\phi_t$ allows certain cancellations that are crucial in our iterative scheme for the computation of parameterizations of invariant tori and bundles.  

We will also assume we have an almost-complex structure $\bm J$ on $U$ compatible with the symplectic structure, i.e., we have a matrix map $J:U\to\R^{2n\times2n}$ that is anti-involutive and symplectic; that is
\begin{gather*}
    J(z)^2=-I_{2n},\\
    J(z)^\top\Omega(z)J(z)=\Omega(z).
\end{gather*}
The almost-complex structure induces a {\em Riemannian metric} $\bm g$ with a matrix representation $G:U\to\R^{2n\times2n}$ defined as $G(z):=-\Omega(z)J(z)$ at each $z\in U$.

Note that we assume the symplectic form to be independent of $\varphi$ but not constant in $U$. In the standard case, the symplectic structure is given by
\[
\Omega_0(z)=\begin{pmatrix} O_n & -I_n\\
I_n & O_n \end{pmatrix}
\]
and the action form, the almost-complex structure, and the metric adopt the matrix representations
\[
	a_0(z)= \tfrac12 \begin{pmatrix} O_n & I_n \\ -I_n & O_n \end{pmatrix} z,\ 
	J_0(z)=    \begin{pmatrix} O_n & -I_n \\ I_n & O_n \end{pmatrix},\ 
	G_0(z)=  \begin{pmatrix} I_n & O_n \\ O_n & I_n \end{pmatrix}.
\]



\subsection{Invariance equations for invariant tori}\label{sec:invK}
We are interested in quasi-periodic solutions for the system given by \eqref{eq:diff_eq}, with frequencies $\hat{\omega}\in\R^{d}$ and $\hat{\alpha}\in\R^\ell$---which correspond to the frequencies of the Hamiltonian. We will refer to $\hat\omega$ and to $\hat\alpha$ as internal and external frequencies, respectively. Geometrically speaking, quasi-periodic solutions lie in $(d+\ell)$-dimensional tori $\tilde{\K}\subset U\times\T^\ell$. In the light of the parameterization method, we consider suitable parameterizations $\tilde{K}:\T^d\times\T^\ell\to U\times \T^\ell$ that conjugate the dynamics in $\tilde \K$ to a linear flow in $\T^d\times\T^\ell$
with frequency $(\hat\omega,\hat\alpha)$. In particular, the frequencies $\hat\omega$ and $\hat\alpha$ need to be, at least, non-resonant or ergodic. That is, 
\[k\cdot \hat \omega + j \cdot \hat \alpha \neq 0 \quad \text{for}\quad (k,j)\in \Z ^{d}\times\Z^\ell\setminus \{0\},
\] 
where $\cdot$ is the standard scalar product. Then, for the range of $\tilde K$ to be an invariant torus, the parameterization is required to satisfy the functional equation
\begin{equation} \label{eq:inv_ext}
\tilde\phi_t\circ \tilde K(\hat\theta,\varphi) - \tilde K (\hat\theta + \hat\omega t, \varphi + \hat\alpha t) = 0,
\end{equation}
where $\hat\theta\in\T^d, \varphi \in \T^\ell$, and $t\in\R$.
\begin{remark}
 By differentiating \eqref{eq:inv_ext} with respect to $t$, we obtain the following vector field version of the invariance equation 
\[
\tilde X_H\circ \tilde K(\hat\theta,\varphi)={\rm D}_{\hat\theta} \tilde K(\hat\theta,\varphi)\hat\omega + {\rm D}_\varphi \tilde K(\hat\theta,\varphi)\hat \alpha.
\]
\end{remark}
Recall that the extended phase space, $U\times\T^\ell$, has a bundle structure with $\T^\ell$ as the base space. Because of this bundle structure, we consider parameterizations for $\tilde \K$ of the form
\begin{equation}
\tilde K(\hat\theta,\varphi) = \begin{pmatrix}
\hat K(\hat\theta,\varphi)\\
\varphi
\end{pmatrix},
\end{equation}
where $\hat K : \T^d\times\T^\ell\to U$ parameterizes a $(d+\ell)-$dimensional invariant torus $\hat\K\subset U$. Then, for $\tilde K$ to satisfy \eqref{eq:inv_ext}, it suffices that $\hat K$ satisfies the invariance equation
\begin{equation}
    \label{eq:inv_cont}
    \phi_t\bigl(\hat{K}(\hat\theta,\varphi),\varphi\bigr) - \hat{K}(\hat\theta + \hat{\omega}t,\varphi + \hat{\alpha}t)=0.
\end{equation}

From a computational point of view, the cost to compute parameterizations rapidly increases with the dimension of the torus. We therefore follow the trick from \cite{GomezM01,HM21} and look for a parameterization  $K:\T^{d-1}\times\T^\ell\to U$ of a $(d+\ell-1)$-dimensional torus $\K\subset\hat\K$, invariant under time-$T$ maps where $T$ is the period associated to one
of the internal frequencies of $\hat \K$.

Let us first define $\hat{\omega}=:\tfrac{1}{T}(\omega,1)$ and $\hat{\alpha}=:\tfrac{1}{T}\alpha$, where $\omega\in\R^{d-1}$ and $\alpha\in\R^\ell$. In what follows, we will require of $(\omega,\alpha)$ stronger non-resonance conditions. We will assume $(\omega,\alpha)$ to be Diophantine, meaning that there exists $\gamma>0$ and $\tau\geq d + \ell-1$ such that for all $ n\in\Z$ 
\[
|k\cdot\omega+j\cdot\alpha-n|\geq \frac{\gamma}{\left(|k|_1+|j|_1\right)^\tau}\quad \text{for}\quad (k,j) \in \Z^{d-1}\times\Z^\ell \setminus \{ 0\}, 
\]
where $|\cdot|_1$ is the $\ell^1$-norm. See Remark \ref{remark:diophantine}.

We can then assume $\hat\theta=(\theta,\theta_d)$, for some fixed $\theta_d\in\T$ and with $\theta\in\T^{d-1}$, and consider a parameterization for $\K$ of the form $K(\theta,\varphi)=\hat K(\theta,\theta_d,\varphi)$. If we look at the invariance equation \eqref{eq:inv_cont} for $t=T$ and some fixed $\theta_d$, we observe that 
\[
\phi_T\big(\hat K(\theta,\theta_d,\varphi),\varphi\big) - \hat K(\theta+\omega,\theta_d+1,\varphi+\alpha)=0,
\]
from where we obtain the following invariance equation for $K
$
\begin{equation}\label{eq:inv_flowmap}
\phi_T\big(K(\theta,\varphi),\varphi\big) - K(\theta+\omega,\varphi+\alpha) = 0.
\end{equation}
We refer to $\omega$ and $\alpha$ as the internal and external rotation vectors, respectively. We can recover a parameterization for $\hat{\K}$, and consequently for $\tilde \K$, from the parameterization of $\K$ as
\[
\hat K(\hat\theta,\varphi)=\phi_{\theta_d T}\big( K(\theta-\theta_d\omega,\varphi-\theta_d\alpha),\varphi-\theta_d\alpha\big)
\]
We refer to $\mathcal{K}$ as the generator of $\hat{\K}$ and to $T$ as the flying time of $\mathcal{K}$. We emphasize that, although we began by considering a $(d+\ell)-$dimensional invariant torus $\tilde \K$ living in the extended phase space $U\times\T^\ell$, this torus is completely determined by $(d+\ell-1)-$dimensional tori $\K$ living in $U$. Consequently, with this formulation, we not only manage to reduce the dimension of the phase space but also of the invariant tori to be computed.

\begin{remark}\label{remark:underK}
For invariant tori $\hat{\mathcal{K}}$ and $\mathcal{K}$, the parameterizations $\hat K$ and $K$ are not unique. For $\hat a \in \R^d$, if $\hat K$ satisfies Eq. \eqref{eq:inv_cont}, $\hat K(\hat\theta + \hat a,\varphi)$ is also a solution parameterizing the same torus. Similarly, for $a \in \R^{d-1}$ and $\tau\in\R$, if $K$ satisfies Eq. \eqref{eq:inv_flowmap}, $\phi_\tau \big(K(\theta + a,\varphi),\varphi\big)$ is also a solution generating the same torus as $K$. Consequently, Eq. \eqref{eq:inv_cont} determines $\hat K$ up to a $d-$dimensional phase shift and Eq. \eqref{eq:inv_flowmap} determines $K$ up to a $(d-1)$-dimensional phase shift and up to a translation within the torus $\hat{\mathcal{K}}$. Hence, for both $\hat {\mathcal{K}}$ and $\mathcal K$, we have $d$ degrees of freedom in their parameterizations.
\end{remark}
 
\begin{remark} \label{remark:freq}
The frequency and the rotation vectors $\hat\omega$ and $\omega$, respectively, are defined up to unimodular matrices. To the frequency vector $\hat A\hat\omega$, with $\hat A\in\Z^{d\times d}$, corresponds the reparameterization $(\hat\theta,\varphi)\mapsto\hat K(\hat A^{-1}\hat\theta,\varphi)$. Similarly, to the rotation vector $A\omega$, with $A\in\Z^{(d-1)\times (d-1)}$,
corresponds the reparameterization $(\theta,\varphi)\mapsto K(A^{-1}\theta,\varphi)$.
\end{remark}

\begin{remark}
When the Hamiltonian is periodic with period $T_H$, it is somewhat standard to look for invariant tori under time-$T_H$ maps, see e.g. \cite{CastellaJ00}. Let us illustrate our motivation for taking time$-T$ maps, where $T$ is the period associated to one of the internal frequencies, with a simple example commonly found in applications. Assume we have a $T_H-$periodic Hamiltonian of the following form,
\begin{equation*}
H(z,\varphi)=H_0(z) + \varepsilon H_1(z,\varphi),
\end{equation*}
where $\varepsilon\in\R$ is some parameter not necessarily small. Also assume that we have computed a parameterization $K_0:\T^{d-1}\to U$ of a $(d-1)$-dimensional generating torus for the autonomous system given by $H_0$. It is then natural to consider a parameterization $K_\varepsilon:\T^{d-1}\times\T\to U$ of a $d$-dimensional generating torus for the system given by $H$ that can be obtained from $K_0$ by continuation methods in $\varepsilon$. If we take time-$T$ maps where $T$ is the period associated to one of the frequencies of the torus parameterized by $K_0$, we can construct $K_\varepsilon$ at $\varepsilon=0$ as
\begin{equation}\label{eq:K0}
K_\varepsilon(\theta,\varphi) = K_0(\theta),
\end{equation} 
from where we can directly start the continuation in $\varepsilon$. See Section \ref{sec:cnte}. The same argument also holds for the quasi-periodic case.
\end{remark}

\subsection{Invariant bundles of rank 1}\label{sec:invW}
In the present work, we focus on $(d+\ell)$-dimensional partially hyperbolic invariant tori with $d=n-1$. Our choice is motivated by applications such as quasi-periodic perturbations of the Restricted Three Body Problem---this will be the test case explored in Section \ref{sec:application}. The results can easily be adapted for Lagrangian tori $(d=n),$ and for the lower dimensional case $(d<n-1)$. In the later case, a complication is that partially hyperbolic invariant tori are not necessarily reducible. Nonetheless, there exist strategies \cite{HaroL06b,HaroL07,HuguetLS12}. Following the results from Section \ref{sec:invK}, we will directly consider rank 1 bundles of $\hat\K$ and $\K$ instead of bundles of $\tilde\K$.

Let us first consider bundles of rank 1, $\hat{\mathcal{W}}$, with base $\hat{\mathcal{K}}$, invariant under the linearization of $\phi_t$ on $\hat{\mathcal{K}}$, where the dynamics on the fibers is contracting or expanding. We then look for a parameterization $\hat{W}:\T^{d}\times\T^\ell\to\R^{2n}$ satisfying \begin{equation}
    {\rm D}_z\phi_t\bigl(\hat{K}(\hat\theta,\varphi),\varphi\bigr)\hat{W}(\hat\theta,\varphi)=
	e^{t\chi}\hat{W}(\hat\theta+\hat{\omega}t,\varphi+\hat{\alpha}t),
\end{equation}
with $\chi\in \R$. If $\chi<0$, $\hat{\mathcal{W}}=\hat{\mathcal{W}}^s$ is the stable bundle and if $\chi>0$, $\hat{\mathcal{W}}=\hat{\mathcal{W}}^u$ is the unstable bundle. 

We again reduce the dimension of the object by using time$-T$ maps and look for a parameterization $W:\T^{d-1}\times\T^\ell\to\R^{2n}$ of a bundle $\mathcal{W}$ with base $\mathcal{K}$ satisfying
\begin{equation}\label{eq:invW}
   {\rm D}_z\phi_T\bigl(K(\theta,\varphi),\varphi\bigr)W(\theta,\varphi)
   =
  W(\theta+\omega,\varphi+\alpha)\lambda,
\end{equation}
with $\lambda=e^{T \chi}$. We can recover the parameterization of the bundle $\mathcal{\hat W}$ as
\begin{align*}
\hat W(&\hat\theta,\varphi) =\\ 
&e^{-\theta_d T\chi}{\rm D}_z\phi_{\theta_d T}(K(\theta-\theta_d\omega,\varphi-\theta_d\alpha),\varphi-\theta_d\alpha)W(\theta-\theta_d\omega,\varphi-\theta_d\alpha).
\end{align*}
Note that because $\hat{\mathcal{W}}$ and $\mathcal{W}$ have rank 1, the dynamics on the fibers is a uniform contraction when $\lambda<1$ and a uniform expansion when $\lambda>1$. We will also refer to $\mathcal{W}$ as the generator of $\hat{\mathcal{W}}$, to $\lambda$ as the Floquet multiplier of $\mathcal{K}$, and to $\chi$ as the Floquet exponent of $\mathcal{\hat{K}}$.
\begin{remark}
Rank 1 stable and unstable bundles are reducible (to constant $\lambda$). In general, one could consider rank $m$ stable and unstable bundles that might not be reducible \cite{HaroL06b,HaroL07,HuguetLS12}.
\end{remark}
\begin{remark}
We are implicitly assuming the bundles are trivial or that can be trivialized (e.g.,  by using the double covering trick \cite{HaroL07}).
\end{remark}
\begin{remark}
The invariant bundle $\hat{\mathcal{W}}$, and analogously $\mathcal{W}$, is the linearization on $\hat{\mathcal{K}}$ of certain manifolds $\widehat {\mathcal{W}}$ defined on the annulus: the whiskers. If $\widehat W:\T^{d}\times\T^{\ell}\times\R\to U$ is a parameterization of a whisker, then it satisfies the invariance equation
\begin{equation*}
  \phi_t\big(\widehat W(\hat\theta,\varphi,s),\varphi\big) = \widehat W(\hat\theta + \hat\omega t,\varphi+\hat\alpha t,e^{t\chi} s),
\end{equation*}
where $s\in\R$. Also note that at $s=0$, $\widehat W(\hat\theta,\varphi,0) = \hat K(\hat\theta,\varphi)$.
\end{remark}

\subsection{Geometric properties of invariant tori}
Until now, we have only considered the geometric properties of the phase space. Nevertheless, invariant tori of exact symplectic maps under non-resonance conditions carry certain geometric properties that are of vital importance in our constructions.

For each $\varphi$, let us consider tori $\mathcal{K}_\varphi\subset\mathcal{K}$ with parameterizations $K_\varphi:\T^{d-1}\to U$ constructed as $K_\varphi(\theta) = K(\theta,\varphi)$. If we think of $U\times\T^\ell$ as the total space of a bundle with base $\T^\ell$ and projection $\pi$, each fiber $\mathcal{F}_\varphi = \pi^{-1}(\varphi)$ contains the torus $\mathcal{K}_\varphi$, see Fig. \ref{fig:fiberwise} for a sketch. 

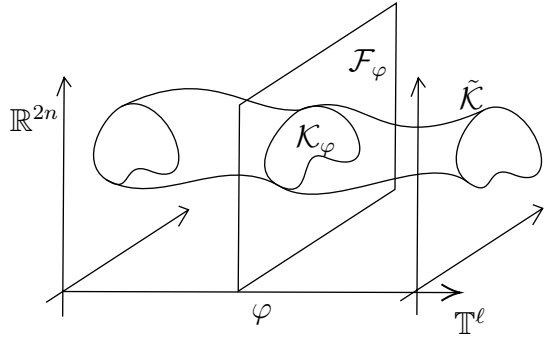
\begin{figure}[h]
\centering
\tikzset{every picture/.style={line width=0.5pt}} 

\begin{tikzpicture}[x=0.75pt,y=0.75pt,yscale=-0.85,xscale=0.85]

\draw [color={rgb, 255:red, 0; green, 0; blue, 0 }  ,draw opacity=1 ][line width=0.5]  (61.89,198.55) -- (145.54,143.75)(72.52,65.38) -- (71.63,206.98) (139.72,141.58) -- (145.54,143.75) -- (139.65,153.59) (68.28,76.52) -- (72.52,65.38) -- (76.64,71.04)  ;
\draw    (71.72,192.12) -- (303,192.74) ;
\draw [shift={(305,192.75)}, rotate = 180.16] [color={rgb, 255:red, 0; green, 0; blue, 0 }  ][line width=0.75]    (10.93,-4.9) .. controls (6.95,-2.3) and (3.31,-0.67) .. (0,0) .. controls (3.31,0.67) and (6.95,2.3) .. (10.93,4.9)   ;
\draw   (97,125.75) .. controls (81,114.75) and (95,88.75) .. (106,82.75) .. controls (117,76.75) and (131,86.75) .. (137,100.75) .. controls (143,114.75) and (140,127.75) .. (123,120.75) .. controls (106,113.75) and (113,136.75) .. (97,125.75) -- cycle ;
\draw  (269.89,197.69) -- (353.54,142.89)(280.51,65.69) -- (279.63,205.98) (347.72,140.72) -- (353.54,142.89) -- (347.65,152.73) (276.27,76.84) -- (280.51,65.69) -- (284.64,71.36)  ;
\draw   (310,127.75) .. controls (294,116.75) and (308,90.75) .. (319,84.75) .. controls (330,78.75) and (344,88.75) .. (350,102.75) .. controls (356,116.75) and (353,129.75) .. (336,122.75) .. controls (319,115.75) and (326,138.75) .. (310,127.75) -- cycle ;
\draw   (199,129.75) .. controls (178,119.75) and (200,90.75) .. (211,84.75) .. controls (222,78.75) and (238,84.75) .. (244,98.75) .. controls (250,112.75) and (243,119.75) .. (226,112.75) .. controls (209,105.75) and (220,139.75) .. (199,129.75) -- cycle ;
\draw    (106,82.75) .. controls (158,56.75) and (179,86.75) .. (211,84.75) ;
\draw    (211,84.75) .. controls (251,69.75) and (255,113.75) .. (319,84.75) ;
\draw    (97,125.75) .. controls (129,143.75) and (165,109.75) .. (199,129.75) ;
\draw    (199,129.75) .. controls (227,147.75) and (285,107.75) .. (310,127.75) ;
\draw   (175.49,81.74) -- (267.56,21.49) -- (266.79,131.69) -- (174.72,191.94) -- cycle ;

\draw (39,80.4) node [anchor=north west][inner sep=0.75pt]    {$\mathbb{R}^{2n}$};
\draw (300,200.4) node [anchor=north west][inner sep=0.75pt]    {$\mathbb{T}^{\ell }$};
\draw (181,198.4) node [anchor=north west][inner sep=0.75pt]    {$\varphi $};
\draw (207,92.4) node [anchor=north west][inner sep=0.75pt]    {$\mathcal{K}_{\varphi }$};
\draw (303,65.4) node [anchor=north west][inner sep=0.75pt]    {$\tilde{\mathcal{K}}$};
\draw (238,47.4) node [anchor=north west][inner sep=0.75pt]    {$\mathcal{F}_{\varphi }$};
\end{tikzpicture}

\caption{Sketch of the torus $\K_\varphi$ within the torus $\tilde{\mathcal{K}}$. }
\label{fig:fiberwise}
\end{figure}

Notice that, by differentiating Eq. \eqref{eq:inv_flowmap} with respect to $\theta$, 
we obtain 
\begin{equation}\label{eq:invDK}
   {\rm D}_z\phi_T\bigl(K(\theta,\varphi),\varphi\bigr){\rm D}_\theta K(\theta,\varphi)
   =
  {\rm D}_\theta K(\theta+\omega,\varphi+\alpha).
\end{equation}
We rephrase \eqref{eq:invDK} by saying that the tangent bundle of $\mathcal{K}_\varphi$, 
parameterized by the column vectors of ${\rm D}_\theta K_\varphi$, is transported by the differential ${\rm D}_z \phi_T$ as:
\[
   {\rm D}_z\phi_T\bigl(K_\varphi(\theta),\varphi\bigr){\rm D}_\theta K_\varphi(\theta)
   =
  {\rm D}_\theta K_{\varphi+\alpha} (\theta+\omega).
\]
We show in Appendix \ref{ap:isotropy} that, from this invariance property, $\mathcal{K}_\varphi$ is an isotropic torus for each $\varphi$---that is, 
$K_\varphi^*\boldsymbol{\omega}=0$. In coordinates, this property reads 
\begin{equation*}
{\rm D}_\theta K_\varphi(\theta)^\top\Omega\big(K_\varphi(\theta)\big){\rm D}_\theta K_\varphi(\theta) = 0.
\end{equation*}
Hence, we think of $\mathcal{K}$ as a $\varphi$-parameterized family of $(d-1)$-dimensional isotropic tori and we say that $\mathcal{K}$ is fiberwise isotropic.

As we will see in Section \ref{sec:frm}, because of the invariance of $\K$ and $\W$ and the fiberwise isotropy of $\K$ there exists a set of coordinates given by a symplectic frame $P: \T^{d-1}\times \T^\ell \to \R^{2n\times 2n}$. This frame is partly generated by ${\rm D}_\theta K$ and $W$ and that reduces the linearized dynamics to upper triangular form, constant along the diagonal, as  
\[
P(\theta+ \omega,\varphi +\alpha)^{-1}{\rm D}_z\phi_T\bigl(K(\theta,\varphi),\varphi\bigr) P(\theta, \varphi) = \hat \Lambda, 
\]
with
\[
	\hat \Lambda=    \begin{pmatrix}
            \Lambda & S(\theta,\varphi) \\
                O_n    & \Lambda^{-\top}
         \end{pmatrix}, \quad
       \Lambda= \begin{pmatrix} I_{n-1} & 0 \\ 0 & \lambda \end{pmatrix}
   ,\quad
   S(\theta,\varphi)=
   \begin{pmatrix} S^1(\theta,\varphi) & 0 \\ 0 & 0 \end{pmatrix}.
\]
Where the zero blocks correspond to zero matrices of suitable dimensions and $S^1$ is a $(n-1)\times(n-1)$ symmetric matrix. With some extra work, $S^1$ can be further reduced to a constant matrix. Hence, the linearized dynamics is automatically reduced to a block triangular matrix $\hat \Lambda$. We will use this form of the linearized dynamics at each iteration step to efficiently compute corrections to the parameterizations of $\mathcal{K}$ and $\mathcal{W}$, see Section \ref{sec:method} for the details.

The reducibilty of the linearized dynamics is commonly known as automatic reducibility and it is  an important property both in theory and applications. See e.g. 
\cite{CallejaCL13a,CanadellH14,GonzalezJLV05,FontichLS09,GonzalezHL13,HaroL06b,HaroL06a,HuguetLS12,LuqueV11} for several references on the parameterization method and reducibility in different contexts relatively close to the one of the present paper and \cite{CallejaCGLl22,HM21,KumarALl22} for recent applications to Celestial Mechanics and Astrodynamics.

\subsection{Cohomological equations} \label{sec:coho}
In our iterative scheme we will encounter cohomological equations. We dedicate this section to such equations that frequently appear in KAM theory. The material presented here is rather standard, see e.g. \cite{Llave01,Russmann76a}, but we adapted the formulation to be better suited to our purposes.

In what follows, for functions $\zeta:\T^{d-1}\times\T^\ell\to\R$, that are 1-periodic in each variable, we will often consider their Fourier series
\[
\zeta(\theta,\varphi)=\sum_{k\in\Z^{d-1}}\sum_{j\in\Z^\ell} \hat\zeta_{kj} \ee^{\bmi 2\pi (k \theta + j \varphi)},
\]
where $k\theta:= \sum_{u= 1}^{d-1} k_{u} \theta_{u}$, $j\varphi:= \sum_{v= 1}^\ell j_{v} \varphi_{v}$, and $\bmi$ is the
imaginary unit. Their average is the zero term 
\[
	\hat\zeta_{00} = \langle\zeta\rangle:=\iint_{\T^{d-1}\T^\ell} \zeta(\theta,\varphi)\, d\theta\, d\varphi.
\]

\begin{remark} \label{remark:decay}
Note that, for analytic $\zeta$, the Fourier coefficients go to zero exponentially fast when
\[|(k,j)|_1:=\sum_{u= 1}^{d-1} |k_u| + \sum_{v=1}^\ell|j_v|\] goes to infinity.
\end{remark}

\subsubsection{Non-small divisors cohomological equations} 
Let $\xi,\eta:\T^{d-1}\times\T^\ell\to\R$ be analytic functions and let $\omega\in\R^{d-1}$ and $\alpha\in\R^\ell$ be fixed rotation vectors. For ease of notation, let us define $\bar\theta:=\theta+\omega$ and $\bar\varphi:=\varphi+\alpha$.

We first consider functional equations of the form 
\begin{equation}
\label{eq:nsdcoho}
\lambda  \xi(\theta,\varphi)-\mu\xi(\bar\theta,\bar\varphi)=\eta(\theta,\varphi),
\end{equation} 
with $\lambda,\mu \in \R$ such that $|\lambda|\neq|\mu|$ and where $\eta$, $\omega$, and $\alpha$ are known and $\xi$ is to be found. If we express $\xi$ and $\eta$ as Fourier series
\begin{equation*}
\label{eq:series}
\xi(\theta,\varphi) = \sum_{k\in\Z^{d-1}}\sum_{j\in\Z^\ell}\hat{\xi}_{kj}\ee^{\bmi2\pi(k\theta + j\varphi)}, \quad \eta(\theta,\varphi) = \sum_{k\in\Z^{d-1}}\sum_{j\in\Z^\ell}\hat{\eta}_{kj}\ee^{\bmi2\pi(k\theta + j\varphi)}
\end{equation*}
the solution of \eqref{eq:nsdcoho} is formally given by
\begin{equation*}
\hat{\xi}_{kj} = \frac{\hat\eta_{kj}}{\lambda-\mu \ee^{\bmi 2\pi  (k\omega+j\alpha)}}\mbox{\quad $\forall k,j$}.
\end{equation*}

\subsubsection{Small-divisors cohomological equations}
We will also consider functional equations of the form
\begin{equation}
\label{eq:sdcoho}
\xi(\theta,\varphi)-\xi(\bar\theta,\bar\varphi)=\eta(\theta,\varphi),
\end{equation}
where $\eta$, $\omega$, and $\alpha$ are known and $\xi$ is to be found. If $\langle\eta\rangle=0$, the solution of \eqref{eq:sdcoho} is formally given by 
\[
\hat{\xi}_{00} \in \R \quad \text{free},
\]
\[\hat{\xi}_{kj} = \frac{\hat\eta_{kj}}{1-\ee^{\bmi 2\pi  (k\omega+j\alpha)}}\quad \mbox{ for $k,j\neq 0$}.
\]
\begin{remark} \label{remark:diophantine}
Note that $1-\ee^{\bmi 2\pi  (k\omega+j\alpha)}$ can become arbitrarily small even if $\omega$ and $\alpha$ are non-resonant---this is the so called {\em small divisors problem}. For analytic $\eta$, the convergence of the series for $\xi$ is guaranteed by requiring stronger non-resonant conditions---that is, the Diophantine condition. This is standard in KAM theory
\end{remark}


\section{Flow map parameterization methods}
\label{sec:method}
In this section we develop the methodology for computing parameterizations of generating tori $\K$ and generating bundles $\W$ for $(d+\ell)$-dimensional partially hyperbolic invariant tori $\hat{\K}$ and their invariant bundles $\hat{\W}$, respectively, where $d=n-1$. 

\subsection{Adapted frames}
\label{sec:frm}
We proceed to construct symplectic frames $P:\T^{d-1}\times\T^\ell\to\R^{2n\times2n}$ in order to leverage the automatic reducibility of the tori. We look for a vector bundle map over the identity such that, in suitable coordinates, the linear dynamics reduce to an upper triangular matrix as
\begin{equation}
    \label{eq:reducibility}
    P(\bar{\theta},\bar{\varphi})^{-1}{\rm D}_z\phi_T\big(K(\theta,\varphi),\varphi\big)P(\theta,\varphi) =       \left(
         \begin{array}{c|c}
            \Lambda & S(\theta,\varphi) \\
            \hline
              O_n      & \raisebox{0pt}[2.2ex][0ex]{$\Lambda^{-\top}$}
         \end{array}
      \right),
\end{equation}
with
\begin{equation}\label{eq:defLmb}
   \Lambda= \begin{pmatrix} I_{n-1} & 0 \\ 0 & \lambda \end{pmatrix}
   ,\quad
   S(\theta,\varphi)=
   \begin{pmatrix} S^1(\theta,\varphi) & 0 \\ 0 & 0 \end{pmatrix},
\end{equation}
where each $0$ block corresponds to a zero matrix of suitable dimensions and $S^1$ is the $(n-1)\times(n-1)$ symmetric matrix known as the torsion matrix. 

The construction of the frame $P$ follows from first constructing a subframe $L:\T^{d-1}\times\T^{\ell}\to \R^{2n\times n}$ that is invariant under the differential of time-$T$ maps on $\K$; that is, $L$ is required to satisfy  
\begin{equation}
\label{eq:L_inv}
    {\rm D}_z\phi_T\big(K(\theta, \varphi),\varphi\big)L(\theta,\varphi)=L(\bar{\theta},\bar{\varphi})\Lambda.
\end{equation}
This is a necessary condition for the frame $P$ to satisfy \eqref{eq:reducibility}.

Note that according to \eqref{eq:invDK} and \eqref{eq:invW}, ${\rm D}_\theta K$ and $W$ are invariant under the differential of time-$T$ maps and they are therefore suitable to partly generate the subframe $L$. For autonomous Hamiltonian systems, the Hamiltonian vector field is invariant under the differential of time-$T$ maps and, consequently, this suffices to construct the subframe $L$, see \cite{HM21}. For periodic and quasi-periodic Hamiltonians, this is no longer the case due to the time dependency. We need to construct a new object invariant under ${\rm D}_z\phi_T$---this is the key element to apply the same ideas of flow map parameterization methods for autonomous systems to our setting.

Let us first derive Eq. \eqref{eq:inv_flowmap} with respect to $\varphi$ to obtain
\begin{equation}
    \label{eq:D_varphi}
    {\rm D}_{z}\phi_T\big(K(\theta,\varphi),\varphi\big){\rm  D}_\varphi K(\theta,\varphi) + {\rm D}_\varphi \phi_T\big(K(\theta,\varphi),\varphi\big) = {\rm D}_{\varphi} K(\bar{\theta},\bar{\varphi}).
\end{equation}
For the flow $\tilde\phi$, let us consider  
\begin{equation}
\label{eq:lie}
  \tfrac{d}{dt}\tilde\phi_T\circ\tilde\phi_t(z,\varphi)
\end{equation}
at $t=0$, from where we obtain the identity
\begin{equation}
\label{eq:transport}
    \begin{pmatrix}
            {\rm D}_z\phi_T(z,\varphi) & {\rm D}_\varphi\phi_T(z,\varphi) \\
            O_{\ell\times 2n} & I_\ell 
    \end{pmatrix}
    \begin{pmatrix}
            X_H(z,\varphi) \\ \hat\alpha
            \end{pmatrix} = 
            \begin{pmatrix}
            X_H\big(\phi_T(z,\varphi),\bar\varphi\big)\\ \hat\alpha
            \end{pmatrix}.
\end{equation} 
Then, using \eqref{eq:D_varphi} and evaluating \eqref{eq:transport} on the torus, i.e., at $z=K(\theta,\varphi)$, it is easy to show that
\begin{equation}
\label{eq:inv_zgeo}
{\rm D}_z\phi_T\big(K(\theta,\varphi),\varphi\big)\mathcal{X}_H(\theta,\varphi)=\mathcal{X}_H(\bar{\theta},\bar{\varphi}),
\end{equation}
where
\begin{equation}
\label{eq:defXH}
\mathcal{X}_H(\theta,\varphi):=X_H\big(K(\theta,\varphi),\varphi\big) - {\rm D}_\varphi  K(\theta,\varphi)\hat{\alpha}    
\end{equation}
is a geometric object defined on the torus that is invariant under ${\rm D}_z\phi_T$. From \eqref{eq:invW},\eqref{eq:invDK}, and \eqref{eq:inv_zgeo} it is immediate to see that 
\begin{equation}
\label{eq:L_def}
     L(\theta, \varphi):=\Big({\rm D}_{\theta}K(\theta,\varphi) \quad \mathcal{X}_H(\theta,\varphi) \quad W(\theta,\varphi) \Big)
\end{equation}
satisfies \eqref{eq:L_inv}. 

Since we require for $P$ to be a symplectic frame, this adds another layer of structure to the subframe $L$. More specifically, the subframe $L$ needs to be fiberwise Lagrangian. That is, for each $\varphi\in\T^\ell$, $L_\varphi(\theta):=L(\theta,\varphi)$ is required to generate a Lagrangian subspace on $T_{\mathcal{K}_\varphi} U$. We prove in Appendix \ref{ap:fiberwise_lag} that the subframe $L$, constructed as in \eqref{eq:L_def}, is a fiberwise Lagrangian subframe.


We now proceed to complement the subframe $L$ with a subframe $N:\T^{d-1}\times\T^\ell\to \R^{2n\times n}$ such that the frame $P$ is symplectic with respect to the standard symplectic form. Note that the symplecticity of $P$ implies the subframe $N$ also needs to be fiberwise Lagrangian. There exists several ways to construct the subframe $N$, see e.g. \cite{book_alex} for details. We will use the almost complex structure and the Riemannian metric to construct 
\begin{align}\label{eq:defN}
   \hat N(\theta,\varphi)&:= J\bigl(K(\theta,\varphi)\bigr)   L(\theta,\varphi)G_K(\theta,\varphi)^{-1},
   \\
   G_K(\theta,\varphi)&:= L(\theta,\varphi)^\top G\big(K(\theta,\varphi)\big) L(\theta,\varphi)
   .
\end{align}
Then, it follows that the frame $\hat{P}(\theta,\varphi)=\left(L(\theta,\varphi)\vert \hat{N}(\theta,\varphi)\right)$ is symplectic and satisfies
\begin{equation}
\label{eq:first_reduction}
   \hat P(\bar{\theta},\bar{\varphi})^{-1} {\rm D}_z\phi_{T}\big(K(\theta,\varphi),\varphi\big)\hat P(\theta,\varphi)
   =
   \left(
   \begin{array}{c|c}
   \Lambda & \hat S(\theta,\varphi) \\
   \hline
   	  O_n        & \raisebox{0pt}[2.2ex][0pt]{$\Lambda^{-\top}$}
   \end{array}
   \right),
\end{equation}
where 
\begin{equation}\label{eq:defSi}
   \hat S(\theta,\varphi)
   =
   \hat N(\bar{\theta},\bar{\varphi})^\top
   \Omega\big(K(\bar{\theta},\bar{\varphi})\big)
   {\rm D}_z\phi_{T}\big(K(\theta,\varphi),\varphi\big)
   \hat N(\theta,\varphi),
\end{equation}
due to symplecticity, satisfies $\hat S(\theta,\varphi)\Lambda^\top = \Lambda \hat S(\theta,\varphi)^\top$.

Note that the frame $\hat{P}$ does not yet satisfy \eqref{eq:reducibility} as the torsion matrix given by $\hat{S}$ needs to be transformed to adopt the reduced form given in \eqref{eq:defLmb}.
In doing so, we get another invariant bundle generated by the last column of $P$. We construct a new symplectic frame by considering symplectic transformations
\begin{equation}\label{eq:defQ}
   Q(\theta,\varphi)=
   \left(
      \begin{array}{c|c}
         I_n & B(\theta,\varphi) \\
         \hline
          O_n   & I_n
      \end{array}
   \right)
      ,
 \end{equation}
with $B$ symmetric, such that the frame
\begin{equation}
\label{eq:defP}
    P(\theta,\varphi):=\hat{P}(\theta,\varphi)Q(\theta,\varphi)
\end{equation}
satisfies \eqref{eq:reducibility}. This translates into the matrix 
\begin{equation}\label{eq:S}
   S(\theta,\varphi)=
               \Lambda B(\theta,\varphi)
               +\hat S(\theta,\varphi)
               -B(\bar{\theta},\bar{\varphi})\Lambda^{-\top}
\end{equation}
adopting the required form which, in turn, determines the matrix $B$. Let us define splittings in blocks of sizes $(n-1)\times(n-1), (n-1)\times 1, 1\times(n-1)$ and $1\times1$ for $\hat{S}$ as 
\begin{equation}\label{eq:splittinghatS}
\hat S(\theta,\varphi)
   =
   \begin{pmatrix}
      \hat S^1(\theta,\varphi) & \hat S^2(\theta,\varphi) \\
      \hat S^3(\theta,\varphi) & \hat S^4(\theta,\varphi)
   \end{pmatrix},
\end{equation}
and define analogous splittings for the matrices $S$ and $B$. Then, expression \eqref{eq:S} reads 
\begin{align}
   S^1(\theta,\varphi) &= \hat S^1(\theta,\varphi) +B^1(\theta,\varphi)\phantom{\lambda}- B^1(\bar\theta,\bar\varphi) ,\label{eq:S1}\\
   S^2(\theta,\varphi) &= \hat S^2(\theta,\varphi)
                     + B^2(\theta,\varphi)\phantom{\lambda}-B^2(\bar \theta,\bar\varphi) \lambda^{-1},
                     \label{eq:S2}\\
   S^3(\theta,\varphi) &= \hat S^3(\theta,\varphi)
                        +B^3(\theta,\varphi)\lambda-B^3(\bar\theta, \bar\varphi),
                     \label{eq:S3}\\                
   S^4(\theta,\varphi) &= \hat S^4(\theta,\varphi)
                     + B^4(\theta,\varphi)\lambda-B^4(\bar\theta,\bar\varphi) \lambda^{-1}.
                     \label{eq:S4}
\end{align}
We require that $S^3(\theta,\varphi)^\top= S^2(\theta,\varphi)= 0$ and $S^4(\theta,\varphi)= 0$, whereas no restriction is applied to $S^1$. Consequently, our requirement on the frame $P$ translates into   
\begin{align}
   B^2(\theta,\varphi)\phantom{\lambda}-B^2(\bar\theta,\bar\varphi) \lambda^{-1}
      &= -\hat S^2(\theta,\varphi),
      \label{eq:b}
      \\
    B^4(\theta,\varphi)\lambda-B^4(\bar\theta,\bar\varphi)\lambda^{-1}
      &= -\hat S^4(\theta,\varphi),
      \label{eq:D}
\end{align}
and $B^3(\theta,\varphi)^\top=
B^2(\theta,\varphi)$. We then choose $B^1(\theta,\varphi)= I_{n-1}$, which results in $S^1(\theta,\varphi)= \hat S^1(\theta,\varphi)$. Equations \eqref{eq:b} and \eqref{eq:D} are non-small divisors cohomological equations that can be solved as detailed in Section \ref{sec:coho}. 

The construction of adapted frames is summarized with the following algorithm:
\begin{algo}\label{algo:frame}
Given $(K,W,\lambda)$ satisfying
\eqref{eq:inv_flowmap} and \eqref{eq:invW}, compute the adapted frame
$P$ and the reduced dynamics by following these steps:
\begin{enumerate}
\item Compute $\mathcal{X}_H(\theta,\varphi)$ from \eqref{eq:defXH}.
\item
   Compute $L(\theta,\varphi)$ from \eqref{eq:L_def}.
\item Compute 
   $\hat N(\theta,\varphi)$ from \eqref{eq:defN}.
\item\label{en:frnumi}
   Compute $\hat S(\theta,\varphi)$ from \eqref{eq:defSi}, let $B^1(\theta,\varphi)=I_{n-1}$, and $S^1(\theta,\varphi)=\hat S^1(\theta,\varphi)$.
\item
   Compute $B^2(\theta,\varphi)$ from  \eqref{eq:b} and let $B^3(\theta,\varphi)= B^2(\theta,\varphi)^\top$.
\item 
   Compute $B^4(\theta,\varphi)$ from \eqref{eq:D}. 
\item
   Compute $P(\theta,\varphi)$ from \eqref{eq:defP} with $Q(\theta,\varphi)$ given by \eqref{eq:defQ}.
\end{enumerate}
\end{algo}

\subsection{Description of a Newton step}
\label{sec:nw}
Given $(K,W,\lambda)$ that approximately satisfy equations \eqref{eq:inv_flowmap} and \eqref{eq:invW}, our aim is to improve such approximations with an iterative scheme. Let us define the error in the invariance of $\mathcal{K}$ and $\mathcal{W}$ as the functions $E^K,E^W:\T^{d-1}\times\T^\ell\to\R^{2n}$ given by 
\begin{gather}
   \label{eq:EK}
E^K(\theta,\varphi):=\phi_T\big(K(\theta,\varphi),\varphi\big)-K(\bar\theta,\bar\varphi),  \\
\label{eq:EW}
E^W(\theta,\varphi):= {\rm D}_z\phi_T\big(K(\theta,\varphi),\varphi\big)W(\theta,\varphi)-W(\bar\theta,\bar\varphi)\lambda.
\end{gather}
Since $\mathcal{K}$ and $\mathcal{W}$ are approximately invariant, $\mathcal{K}$ is approximately reducible. That is, there is an error in the reducibility of the linearized dynamics controlled by $E^K$ and $E^W$ as
\begin{equation*}
    P(\bar{\theta},\bar{\varphi})^{-1}{\rm D}_z\phi_T\big(K(\theta,\varphi),\varphi\big)P(\theta,\varphi) =
    \begin{pmatrix} 
            \Lambda & S(\theta,\varphi) \\
            O_n & \Lambda^{-\top}\\
    \end{pmatrix} + \mathcal{O}(\|E^K\|,\|E^W\|)
\end{equation*}
for some suitable norms. Also, the matrix $P$ is approximately symplectic. Therefore, instead of computing its inverse, we can use that
\[
P(\bar{\theta},\bar{\varphi})^{-1} =- \Omega_0 P(\bar{\theta},\bar{\varphi})^\top\Omega\big(K(\bar{\theta},\bar{\varphi})\big) +\mathcal{O}(\|E^K\|,\|E^W\|)
\]
to compute an approximate inverse. In order to improve the parameterizations for $\mathcal{K}$ and $\mathcal{W}$, we add corrections to their parameterizations such that the linearized invariance equations vanish at first order. Therefore, we can neglect the error in the reducibility of the linearized dynamics and in the inverse of $P$ as long as their contributions is of second order or higher.

\subsubsection{A Newton step on the torus}
\label{sec:nwttr}
We proceed by adding a correction $\Delta K:\T^{d-1}\times\T^\ell\to\R^{2n}$ to the parameterization $K$. The invariance equation on the corrected torus then reads 
\begin{equation}
\label{eq:E_lin1}
\phi_T\big(K(\theta,\varphi)+\Delta K(\theta,\varphi),\varphi\big) - K(\bar\theta,\bar\varphi)- \Delta K(\bar\theta,\bar\varphi)=0.
\end{equation}
We linearize the equation around the approximate torus in order to find the correction $\Delta K$ that makes equation \eqref{eq:E_lin1} vanish at first order. Let us use the frame $P$ and write the correction in coordinates such that $\Delta K(\theta,\varphi)=P(\theta,\varphi)\xi(\theta,\varphi)$. Expanding around the approximated torus and retaining only terms up to first order results in
\begin{equation}
\label{eq:E_lin2}
{\rm D}_z\phi_T\big(K(\theta,\varphi),\varphi\big)P(\theta,\varphi)\xi(\theta,\varphi) - P(\bar\theta,\bar\varphi)\xi(\bar\theta,\bar\varphi)=-E^K(\theta,\varphi).
\end{equation}
We then left-multiply by $P(\bar\theta,\bar\varphi)^{-1}$ and neglect higher order terms to obtain
\begin{equation}
\label{eq:eq_nwK}
\begin{pmatrix} \Lambda & S(\theta,\varphi) \\ 0 & \Lambda^{-\top} \end{pmatrix}\xi(\theta,\varphi)- \xi(\bar\theta,\bar\varphi)=\eta(\theta,\varphi),
\end{equation}
where 
\begin{equation}
\label{eq:defeta}
\eta(\theta,\varphi)=\Omega_0 P(\bar\theta,\bar\varphi)^\top\Omega\big(K(\bar\theta,\bar\varphi)\big)E^K(\theta,\varphi).    
\end{equation}
Let us write $\xi$ and $\eta$ into  $(n-1)\times1\times(n-1)\times1$ components as
\begin{equation*}
   \xi(\theta,\varphi)=
      \begin{pmatrix}
         \xi^1(\theta,\varphi)\\ \xi^2(\theta,\varphi)\\ \xi^3(\theta,\varphi)\\ \xi^4(\theta,\varphi)
      \end{pmatrix}
   ,\quad
   \eta(\theta,\varphi)=
      \begin{pmatrix}
         \eta^1(\theta,\varphi)\\ \eta^2(\theta,\varphi)\\ \eta^3(\theta,\varphi)\\
         \eta^4(\theta,\varphi)
      \end{pmatrix}
   .
\end{equation*}
Observing that $\langle\eta^3\rangle$ is quadratically small---see Appendix \ref{ap:quadratiically small averages}---we neglect again higher order terms to write \eqref{eq:eq_nwK} as
\begin{align}
   \xi^1(\theta,\varphi)+S^1(\theta,\varphi)\xi^3(\theta,\varphi)
         -\xi^1(\bar\theta,\bar\varphi) &= \eta^1(\theta,\varphi),\label{eq:corrtr1} \\
   \lambda\xi^2(\theta,\varphi)-\xi^2(\bar\theta,\bar\varphi) 
   \phantom{-\xi^1(\bar\theta,\bar\varphi)}
      &= \eta^2(\theta,\varphi),\label{eq:corrtr2} \\
   \xi^3(\theta,\varphi)-\xi^3(\bar\theta,\bar\varphi)
    \phantom{-\xi^1(\bar\theta,\bar\varphi)}
      &= \eta^3(\theta,\varphi)-\langle\eta^3\rangle,\label{eq:corrtr3} \\
   \lambda^{-1}\xi^4(\theta,\varphi)-\xi^4(\bar\theta,\bar\varphi)
    \phantom{-\xi^1(\bar\theta,\bar\varphi)}
      &= \eta^4(\theta,\varphi),\label{eq:corrtr4}
\end{align}
Note that equations \eqref{eq:corrtr2} and \eqref{eq:corrtr4} are non-small divisors cohomological equations whereas equation \eqref{eq:corrtr3} is a small divisors cohomological equation. Once $\xi^3$ is solved for, equation \eqref{eq:corrtr1} is also a small divisors cohomological equations. Since \eqref{eq:corrtr3} is solvable (assuming Diophantine conditions), with $\langle\xi^3\rangle$ free, we adjust its value in order to adjust averages in \eqref{eq:corrtr1}. That is, we will use this freedom to solve equation \eqref{eq:corrtr1} as a small divisors cohomological equation. See Section \ref{sec:coho}. Let us consider
\[
\xi^3(\theta,\varphi)=\xi^3_0 + \tilde{\xi}^3(\theta,\varphi),
\]
where $\tilde\xi^3$, solves \eqref{eq:corrtr3} with $\langle\tilde\xi^3\rangle=0$. Then, equation \eqref{eq:corrtr1} becomes
\begin{equation*}
\xi^1(\theta,\varphi)-\xi^1(\bar\theta,\bar\varphi)=\eta^1(\theta,\varphi)-S^1(\theta,\varphi)\tilde\xi^3(\theta,\varphi)-S^1(\theta,\varphi)\xi^3_0.
\end{equation*}
We now choose $\xi^3_0$ such that
\begin{equation}
\label{eq:xi3sys}
\langle S^1 \rangle\xi^3_0 = \langle \eta^1 - S^1\tilde\xi^3 \rangle.
\end{equation}  
Hence, we can now solve equation \eqref{eq:corrtr1} as a small divisors cohomological equation with $\langle \xi^1 \rangle$ free; a simple choice is to take $\langle \xi^1 \rangle=0$. This underdeterminacy reflects the underdeterminacy of the parameterization of the generating torus $\mathcal{K}$ under phase shifts in $\T^{d-1}$ and translations within $\hat{\mathcal K}$, see Remark \ref{remark:underK}.

The Newton step on the torus is summarized with the following algorithm:
\begin{algo}\label{algo:nwttr}
Let $(K,W,\lambda)$ satisfy equations
\eqref{eq:inv_flowmap} and \eqref{eq:invW} approximately. Obtain the corrected generating
torus by following these steps:
\begin{enumerate}
\item
   Compute $P(\theta,\varphi)$ and $S^1(\theta,\varphi)$ by following
   Algorithm \ref{algo:frame}.
\item
   Compute the error $E^K(\theta,\varphi)$ from \eqref{eq:EK}.
\item
   Compute $\eta(\theta,\varphi)$, the  right-hand side of the cohomological equations given in \eqref{eq:eq_nwK}, from \eqref{eq:defeta}.
\item
   Solve \eqref{eq:corrtr2} and \eqref{eq:corrtr4} as non-small divisors
   cohomological equations in order to obtain $\xi^2(\theta,\varphi)$ and
   $\xi^4(\theta,\varphi)$.
\item
   Solve \eqref{eq:corrtr3} as small divisors cohomological equations
   in order to obtain its zero-average solution $\tilde\xi^3(\theta,\varphi)$.
\item
   Compute $\langle S^1\rangle$
   and the right-hand side of the linear system \eqref{eq:xi3sys}
    and solve it in order to obtain $\xi^3_0$.
\item
   Solve \eqref{eq:corrtr1} as small divisors cohomological equations in order to  obtain $\xi^1(\theta,\varphi)$ with $\langle \xi^1\rangle=0$.
\item
   Set $K(\theta,\varphi)\leftarrow K(\theta,\varphi) +P(\theta,\varphi)\xi(\theta,\varphi)$. 
\end{enumerate}
\end{algo}

\subsubsection{A Newton step on the bundle}
\label{sec:nwtbd}
Once we have corrected the parameterization of the generating torus in the previous step, we proceed in an analogous manner and add corrections $\Delta W:\T^{d-1}\times\T^\ell\to\R^{2n}$ and $\Delta \lambda\in\R$ to the parameterization of the generating bundle and to the Floquet multiplier, respectively. Then, on the corrected bundle, we obtain from  \eqref{eq:invW} 
\begin{equation*}
\begin{split}
{\rm D}_z\phi_T\big(K(\theta,\varphi),\varphi \big)\big(W(\theta,\varphi) &+ \Delta W(\theta,\varphi)\big) \\
 	&-\left(W(\bar\theta,\bar\varphi) + \Delta 				W(\bar\theta,\bar\varphi)\right)(\lambda + \Delta\lambda)=0.
\end{split}
\end{equation*}
We choose the corrections such that the previous equation vanishes at first order. Again, we write the correction term for the bundle in coordinates such that $\Delta W(\theta,\varphi)=P(\theta,\varphi)\xi(\theta,\varphi)$ and expand the previous expression retaining terms up to first order to obtain 
\begin{equation*}
\begin{split}
{\rm D}_z\phi_T\bigl(K(\theta,\varphi),\varphi\bigr)P(\theta,\varphi)\xi(\theta,\varphi) &- P(\bar\theta,\bar\varphi)\xi(\bar\theta,\bar\varphi)\lambda \\
 &- W(\bar\theta,\bar\varphi)\Delta \lambda = -E^W(\theta,\varphi).
\end{split}
\end{equation*}
We then left-multiply by $P(\bar\theta,\bar\varphi)^{-1}$, use equation \eqref{eq:reducibility}, and neglect second order terms to obtain 
\begin{equation}
\label{eq:eq_nwW}
\begin{pmatrix} \Lambda & S(\theta,\varphi) \\ O_n & \Lambda^{-\top} \end{pmatrix}\xi(\theta,\varphi)- \lambda\xi(\bar\theta,\bar\varphi)-e_n\Delta \lambda=\eta(\theta,\varphi),\quad 
\end{equation}  
with 
\[
   e_n = \begin{pmatrix} 0_{n-1}\\ 1 \\ 0_{n-1} \\ 0 \end{pmatrix}
   ,
\]
and 
\begin{equation}
\label{eq:defetabd}
    \eta(\theta,\varphi)=\Omega_0 P(\bar\theta,\bar\varphi)^\top\Omega\big(K(\bar\theta,\bar\varphi)\big)E^W(\theta,\varphi).
\end{equation}

We can split $\xi$ and $\eta$ as in the previous section and rewrite \eqref{eq:eq_nwW} as
\begin{align}
   \xi^1(\theta,\varphi)+S^1(\theta,\varphi)\xi^3(\theta,\varphi)
         -\lambda\xi^1(\bar\theta,\bar\varphi)
         \phantom{\ \,-\deltalambda}
      &= \eta^1(\theta,\varphi),\label{eq:corrbd1} \\
   \lambda\xi^2(\theta,\varphi)-\lambda\xi^2(\bar\theta,\bar\varphi)
   -\deltalambda
      &= \eta^2(\theta,\varphi),\label{eq:corrbd2} \\
   \xi^3(\theta,\varphi)-\lambda\xi^3(\bar\theta,\bar\varphi)
      \phantom{\ \,-\deltalambda}
      &= \eta^3(\theta,\varphi),\label{eq:corrbd3} \\
   \lambda^{-1}\xi^4(\theta,\varphi)-\lambda\xi^4(\bar\theta,\bar\varphi)
      \phantom{\ \,-\deltalambda}
      &= \eta^4(\theta,\varphi).\label{eq:corrbd4}
\end{align}
Equations \eqref{eq:corrbd3} and \eqref{eq:corrbd4} can be solved as non-small divisors cohomological equations and, once $\xi^3$ is known, equation \eqref{eq:corrbd1} can also be solved as a non-small divisors cohomological equation. On the contrary, equation \eqref{eq:corrbd2} can be solved as a small divisors cohomological equation with $\langle \xi^2 \rangle$ free once we adjust the average of the right hand side by taking $\Delta \lambda= -\langle \eta^2\rangle$. The freedom in $\langle \xi^2 \rangle$ is related to the freedom in the length of $W$, analogous to the underdeterminacy of the lengths of eigenvectors in an eigenvalue problem. We then take the simplest choice for this average, i.e., $\langle \xi^2 \rangle = 0$.

The Newton step on the bundle is summarized with the following algorithm:
\begin{algo}\label{algo:nwtbd}
Let $(K,W,\lambda)$ satisfy equations \eqref{eq:inv_flowmap} and \eqref{eq:invW} approximately. Obtain the corrected generating
bundle and Floquet multiplier by following these steps:
\begin{enumerate}
\item
   Compute $P(\theta,\varphi)$ and $S^1(\theta,\varphi)$ by following
   Algorithm \ref{algo:frame}.
\item
   Compute the error $E^W(\theta,\varphi)$ from \eqref{eq:EW}.
\item
   Compute $\eta(\theta,\varphi)$, the right-hand side of the cohomological equations given in \eqref{eq:eq_nwW}, from \eqref{eq:defetabd}.
\item
   Solve \eqref{eq:corrbd3}, \eqref{eq:corrbd4}, and \eqref{eq:corrbd1}
    as non-small divisors cohomological equations in order to obtain $\xi^3(\theta,\varphi)$, $\xi^4(\theta,\varphi)$, and $\xi^1(\theta,\varphi)$,
   respectively.
\item
   Take $\Delta\lambda=-\langle\eta^2\rangle$.
\item
   Solve \eqref{eq:corrbd2} as a small divisors cohomological equations in order to obtain $\xi^2(\theta,\varphi)$ with $\langle\xi^2\rangle=0$,
\item
   Set $W(\theta,\varphi)\leftarrow W(\theta,\varphi)+P(\theta,\varphi)\xi(\theta,\varphi)$ and $\lambda\leftarrow\lambda+\Delta\lambda$.
\end{enumerate}
\end{algo}

\subsection{Continuation with respect to external parameters}
\label{sec:cnte}
Let us assume we have a family of Hamiltonians $H_\varepsilon$ that depends analytically on some parameter $\varepsilon\in\R$. Given $(K,W,\lambda)_\varepsilon$ for certain value of $\varepsilon$ that satisfies equations \eqref{eq:inv_flowmap} and \eqref{eq:invW}, we want to compute parameterizations of a generating torus and generating bundle (with the corresponding Floquet multiplier) for a different Hamiltonian $H_{\varepsilon'}$. As commonly done in continuation methods, see e.g. \cite{allgower-georg}, we will provide a methodology to compute the tangent to the continuation curve with respect to $\varepsilon$ from where we obtain a first order approximation of the invariant objects for $H_{\varepsilon'}$. 

Let us assume equations \eqref{eq:inv_flowmap} and \eqref{eq:invW} define implicitly $(K,W,\lambda)_\varepsilon$ as functions of $\varepsilon$. Then, we want to compute $\partial_\varepsilon K, \partial_\varepsilon W$, and $\partial_\varepsilon \lambda$. In the following, for the sake of notation clarity, we will omit the dependency of $(K,W,\lambda)$ and of $\phi_T$ on $\varepsilon.$ Let us begin by taking Eq. \eqref{eq:inv_flowmap} and differentiate it with respect to $\varepsilon$ to obtain
\[
{\rm D}_z \phi_T\big(K(\theta,\varphi),\varphi\big)\partial_\varepsilon K(\theta,\varphi) - \partial_\varepsilon K(\bar\theta,\bar\varphi) + \partial_\varepsilon \phi_T\big(K(\theta,\varphi),\varphi\big)=0,
\]
where $\partial_\varepsilon \phi_T\bigl(z,\varphi\bigr)$ is the variation of the map $\phi_T$ with respect to $\varepsilon$ that can be computed through variational equations. We now express the derivatives in the frame such that $\partial_\varepsilon K(\theta,\varphi)= P(\theta,\varphi)\xi(\theta,\varphi)$. The previous expression then reads
\begin{equation*}
\begin{split}
{\rm D}_z\phi_T\big(K(\theta,\varphi),\varphi\big)P(\theta,\varphi)\xi(\theta,\varphi) &- P(\bar\theta,\bar\varphi)\xi(\bar\theta,\bar\varphi) \\
 &+ \partial_\varepsilon \phi_T\big(K(\theta,\varphi),\varphi\big)=0
\end{split}
\end{equation*}
and, after left-multiplication by $P(\bar\theta,\bar\varphi)^{-1}$, we have
\begin{equation}
\label{eq:syspartialK}
\begin{pmatrix} \Lambda & S(\theta,\varphi) \\ 0 & \Lambda^{-\top} \end{pmatrix}\xi(\theta,\varphi)- \xi(\bar\theta,\bar\varphi)=\eta(\theta,\varphi),
\end{equation}
where
\begin{equation}
\label{eq:defetacontK}
\eta(\theta,\varphi) = \Omega_0 P(\bar\theta,\bar\varphi)^\top\Omega\big(K(\bar\theta,\bar\varphi)\big)\partial_\varepsilon \phi_T\big(K(\theta,\varphi),\varphi\big).    
\end{equation}
The cohomological equations given in \eqref{eq:syspartialK} can be solved exactly as described in Section \ref{sec:nwttr} but with $\eta$ given by \eqref{eq:defetacontK}.

\begin{remark}
For the system given by \eqref{eq:syspartialK} to be solvable, we encounter again small divisors cohomological equations for $\xi^3$ (recall the splittings defined in Section \ref{sec:nwttr}) that require $\langle\eta^3\rangle=0$. Using fiberwise symplectic deformations we prove in Appendix \ref{ap:zero_average} that, for $\eta$ defined as in \eqref{eq:defetacontK}, $\langle\eta^3\rangle=0$.

\end{remark}

In order to obtain $\partial_\varepsilon W$ and $\partial_\varepsilon \lambda$, we differentiate equation \eqref{eq:invW} with respect to $\varepsilon$ to obtain
\begin{equation*}
\begin{split}
\partial_\varepsilon\Big( {\rm D}_z \phi_T\bigl(K(\theta,\varphi),\varphi\bigr)\Big) W(\theta,\varphi) &+ {\rm D}_z \phi_T\bigl(K(\theta,\varphi),\varphi\bigr)\partial_\varepsilon W(\theta,\varphi)\\
 &- W(\bar\theta,\bar\varphi)\partial_\varepsilon \lambda - \partial_\varepsilon W(\bar\theta,\bar\varphi)\lambda = 0,
\end{split}
\end{equation*} 
which can be rewritten as
\begin{equation}
\label{eq:DeW}
\begin{split}
{\rm D}_z \phi_T\bigl(K(\theta,\varphi),\varphi\bigr)\partial_\varepsilon W(\theta,\varphi)&-\partial_\varepsilon W(\bar\theta,\bar\varphi)\lambda\\
&- W(\bar\theta,\bar\varphi)\partial_\varepsilon \lambda = -E^{\partial_\varepsilon W}(\theta,\varphi),
\end{split}
\end{equation}
with
\begin{equation}
\begin{split}
\label{eq:EpartialW}
E^{\partial_\varepsilon W}(\theta,\varphi) &= \partial_\varepsilon\Big( {\rm D}_z \phi_T\bigl(K(\theta,\varphi),\varphi\bigr)\Big) W(\theta,\varphi) \\
&=\partial_\varepsilon{\rm D}_{z}\phi_T\bigl(K(\theta,\varphi),\varphi\bigr)W(\theta,\varphi)\\
 &\phantom{=} + {\rm D}_z^2\phi_T\bigl(K(\theta,\varphi),\varphi\bigr)[W(\theta,\varphi),\partial_\varepsilon K(\theta,\varphi)]. 
\end{split}
\end{equation}
The form ${\rm D}_z^2\phi_T\bigl(z,\varphi\bigr)[\cdot,\cdot]$ is the bilinear form given by the second differential of $\phi_T$ with respect to $z$ that can be computed through variational equations. Then, by using the frame such that $\partial_\varepsilon W(\theta,\varphi)=P(\theta,\varphi)\xi(\theta,\varphi)$, and after left-multiplication by $P(\bar\theta,\bar\varphi)^{-1}$, expression \eqref{eq:DeW} reads
\begin{equation}\label{eq:DeWcoho}
\begin{pmatrix} \Lambda & S(\theta,\varphi) \\ 0 & \Lambda^{-\top} \end{pmatrix}\xi(\theta,\varphi)- \lambda\xi(\bar\theta,\bar\varphi)-e_n\partial_\varepsilon \lambda=\eta(\theta,\varphi),
\end{equation} 
with
\begin{equation}
\label{eq:defetapartialW}
\eta(\theta,\varphi)=\Omega_0 P(\bar\theta,\bar\varphi)^\top\Omega\big(K(\bar\theta,\bar\varphi)\big)E^{\partial_\varepsilon W}(\theta,\varphi).    
\end{equation}
These cohomological equations can be solved exactly as described in Section \ref{sec:nwtbd} but with $\eta$ given by \eqref{eq:defetapartialW}.

\begin{algo}\label{algo:cnte}
Let $(K,W,\lambda)_\varepsilon$ be implicit functions of $\varepsilon$ by
equations \eqref{eq:inv_flowmap} and \eqref{eq:invW}. Find
$\partial_\varepsilon K$, $\partial_\varepsilon W$, and $\partial_\varepsilon\lambda$ by following these steps:
\begin{enumerate}
\item
   Compute $P(\theta,\varphi)$ and $S^1(\theta,\varphi)$ by following
   Algorithm \ref{algo:frame}.
\item
  Compute $\eta(\theta,\varphi)$, the right-hand side of the cohomological equations given in \eqref{eq:syspartialK}, from \eqref{eq:defetacontK}.
  \item
   Solve \eqref{eq:corrtr2} and \eqref{eq:corrtr4} as non-small divisors
   cohomological equations in order to obtain $\xi^2(\theta,\varphi)$ and
   $\xi^4(\theta,\varphi)$.
\item
   Solve \eqref{eq:corrtr3} as small divisors cohomological equations
   in order to obtain its zero-average solution $\tilde\xi^3(\theta,\varphi)$.
\item
   Compute $\langle S^1\rangle$,
   and the right-hand side of the linear system, \eqref{eq:xi3sys}
    and solve it in order to obtain $\xi^3_0$.
\item
   Solve \eqref{eq:corrtr1} as small divisors cohomological equations in order to  obtain $\xi^1(\theta,\varphi)$ with $\langle \xi^1\rangle=0$.
\item
   Set $\partial_\varepsilon K(\theta,\varphi)\leftarrow P(\theta,\varphi)\xi(\theta,\varphi)$.
\item\label{en:cntTnumi}
   Compute $E^{\partial_\varepsilon W}(\theta,\varphi)$ from \eqref{eq:EpartialW}.
\item
   Compute $\eta(\theta,\varphi)$, the right-hand side of the cohomological equations given in \eqref{eq:DeWcoho}, from \eqref{eq:defetapartialW}.
\item
   Solve \eqref{eq:corrbd3}, \eqref{eq:corrbd4}, and \eqref{eq:corrbd1} as non-small divisors cohomological equations in order to obtain
   $\xi^3(\theta,\varphi)$, $\xi^4(\theta,\varphi)$, and $\xi^1(\theta,\varphi)$, 
   respectively. 
\item
   Take $\partial_\varepsilon\lambda=-\langle\eta^2\rangle$.
\item
    Solve 
    \[
    \lambda \xi^2(\theta,\varphi)-\lambda\xi^2(\theta,\varphi) - \partial_\varepsilon\lambda = \eta^2(\theta,\varphi)
    \]
    as a small divisors cohomological equation with $\langle\xi^2\rangle=0$
\item
   Set $\partial_\varepsilon W(\theta,\varphi)\leftarrow P(\theta,\varphi)\xi(\theta,\varphi)$.
\end{enumerate}
\end{algo}

\subsection{Comments on implementations}
\label{sec:compimpl}
 For every function $\zeta:\T^{d-1}\times\T^\ell\to\R$, we use its Fourier representation and their grid representation. The Fourier representation is given in terms of Fourier coefficients $\{\hat\zeta_{kj}\}\in\C$, with $k\in\Z^{d-1}$ and $j\in\Z^\ell$, and the grid representation is given by the values $\{\zeta_{kj}\}$ of $\zeta$ in an equally spaced grid of $\T^{d-1}\times\T^\ell$. We can switch between both representations with the linear one-to-one map provided by the Discrete Fourier Transform (DFT). Operations such as phase shifts, differentiation, or solving cohomological equations can be done efficiently in Fourier space whereas numerical integration or evaluation of vector fields can be done more efficiently in the grid representation. We switch between both representations according to the operations to be performed. See e.g. \cite{HM21} for details.
 
In practice, we can only work with truncated series. Furthermore, the DFT only provides approximate coefficients, that cannot in general be directly identified with the Fourier coefficients. Instead, for each coefficient $\hat\zeta_{kj}$, we use the DFT coefficient that best approximates it, see e.g. \cite{Henrici79} for details. 

The unstable Floquet multiplier of $\mathcal{K}$ can be large, which would compromise the convergence of the method. In the numerical explorations of Section \ref{sec:application}, instead of solving Eqs. \eqref{eq:inv_flowmap} and \eqref{eq:invW}, we follow a multiple shooting approach. We consider multiple tori $\left\{\mathcal{K}_i\right\}_{i=0}^{m-1}$ and bundles $\left\{\mathcal{W}_i\right\}_{i=0}^{m-1}$ parameterized by $\left\{K_i\right\}_{i=0}^{m-1}$ and $\left\{W_i\right\}_{i=0}^{m-1}$ with $K_i:\T^{d-1}\times\T^\ell\to U$ and $W_i:\T^{d-1}\times\T^\ell\to \R^{2n}$ such that
\begin{align}
\label{eq:multi1}
 \phi_{T/m}\big(K_i(\theta,\varphi),\varphi\big) &- K_{i+1}(\bar\theta_m,\bar\varphi_m) = 0,
 \\
 \label{eq:multi2}
 {\rm D}_z \phi_{T/m}\big(K_i(\theta,\varphi),\varphi\big)W_i(\theta,\varphi) &- \lambda^{\frac{1}{m}} W_{i+1}(\bar\theta_m,\bar\varphi_m) = 0
\end{align}
for $i=1,...,m-1$, where $\bar\theta_m:=\theta + \omega/m$ and $\bar\varphi_m:=\varphi + \alpha/m$. The subindex in $K$ and $W$ is defined modulo $m$. We choose $m$ such that $|\lambda|^{\frac{1}{m}}$ is small enough. The methodology described in Section \ref{sec:method} generalizes with some extra work for Eqs. \eqref{eq:multi1} and \eqref{eq:multi2}. For the sake of clarity, we described the method for $m=1$, see \cite{HM21} for the details in the autonomous case. 

The evaluation of flow maps and the variational equations require numerical integration that can be costly for high-dimensional tori. Nonetheless, numerical integration is easily parallelizable by assigning trajectories to different threads.

To prevent numerical instabilities, we implement a lowpass filter for the approximate Fourier series of $K$ and $W$ after each iterate. For simplicity, assume $\mathcal{K}$ is a 2-dimensional torus with a grid of size $N_1\times N_2$---which is the test case of Section \ref{sec:application}. The filtering strategy consists of setting to zero the coefficients for $|k|> k_f$ and $|j|>j_f$ for the values $k_f = r_f\cdot N_1$ and $j_f = r_f\cdot N_2$, where $r_f\in\left[\frac{1}{4},\frac{1}{2}\right)$ is some filtering factor. Because of this filtering strategy, the effective number of approximate Fourier coefficients is not $N_1\times N_2$. 

Since we are working with truncated Fourier series, we need to decide on the number of Fourier coefficients. This number needs to be large enough so the series allow the parameterizations to meet their required error tolerance in the invariance equations. The necessary number of coefficients might change throughout the continuations so we also need a strategy to adjust the grid sizes of the invariant objects. Since our objects are real analytic, their Fourier coefficients decay exponentially fast according to Remark \ref{remark:decay}. Our strategy is based on controlling this decay. In order to do so, we compute the tails of the truncated series. For functions $\zeta:\T\times\T\to\R$, we compute the tails in the internal phase $\theta$ and the external phase $\varphi$ as
\begin{equation*}
    t_\theta(\zeta) = \sum_{\substack{|k|>k_t\\|j|<j_t}}|\hat\zeta_{kj}| \qquad 
   t_\varphi(\zeta) = \sum_{\substack{|k|<k_t\\|j|>j_t}}|\hat\zeta_{kj}|,
\end{equation*}
for $k_t = r_t \cdot N_1$, $ j_t = r_t\cdot N_2$, where $r_t$ is some tail factor. For the case where $\zeta$ is a multidimensional function, we consider $t_\theta$ and $t_\varphi$ to act component-wise.

Furthermore, throughout the continuations on $\varepsilon$, the necessary step size of the continuation procedure needs to be adjusted. We conclude this section with a proposal in Algorithm \ref{algo:implementation} for the continuation of generating tori, bundles, and Floquet multipliers that is an adaptation for our case of Algorithm $3.6.1$ in \cite{HM21}.
The main difference is that in the algorithm of \cite{HM21}, a strategy to reduce the number of Fourier coefficients in the parameterizations is necessary whereas in Algorithm \ref{algo:implementation}, we require a strategy to control the decay of the coefficients in the different phases of $K$ and $W$ to adjust the grid sizes of the parameterizations.

Let us represent the parameterizations of the invariant objects and the Floquet multiplier for some $\varepsilon$ as
\[
\mathcal{T}_\varepsilon=(\varepsilon,K,W,\lambda)
\]
and let us also consider the following norm for $\mathcal{T}_\varepsilon$ that we will use for the step size control
\[
\|\mathcal{T}_\varepsilon\|:=\left(\varepsilon^2+ \lambda^2 + \langle\|K\|^2_2\rangle + \langle\|W\|^2_2\rangle\right)^{\frac{1}{2}},
\]
where $\|K\|_2$ stands for the map $(\theta,\varphi)\mapsto\|K(\theta,\varphi)\|_2$ and $\|W\|_2$ is the analogous map.

The error in the torus and bundle are estimated as follows
\begin{equation*}
\begin{split}
   \err^K(\mathcal{T_\varepsilon})&=\max_{\substack{
   0\leq k<N_1\\
   0\leq j<N_2
   }}\|E^K(k/N_1,j/N_2)\|_\infty, \\
   \err^W(\mathcal{T}_\varepsilon)&=\max_{\substack{
   0\leq k<N_1\\
   0\leq j<N_2
   }}\|E^W(k/N_1,j/N_2)\|_\infty.
   \end{split}
\end{equation*}

\begin{algo}\label{algo:implementation}
Let $\mathcal{T}_\varepsilon$ be an approximately invariant torus, bundle, and Floquet multiplier for some value of the external parameter $\varepsilon$ and some values of the flying time $T$ and rotation vectors $\omega$ and $\alpha$. Let $K$ and $W$ have grid representations of size $N_1\times N_2$, let $\epsilon_K,\epsilon_W,\epsilon_t$ be tolerances, $r_t$ some tail factor, and $n_{max}, n_\varepsilon,n_{des},n_t$ be integers. Assume we have a suggested continuation step $\Delta\varepsilon$. Compute a new torus, bundle, and Floquet multiplier $\mathcal{T}_{\varepsilon'}$ for a new value of the external parameter $\varepsilon'$ and fixed $T$, $\omega$, and $\alpha$ as follows:
\begin{enumerate}
    \item \label{en:cnt:start}Compute $\partial_\varepsilon K, \partial_\varepsilon W$, and $\partial_\varepsilon \lambda$ following algorithm \ref{algo:cnte} and set the continuation direction $\delta \leftarrow (1, \partial_\varepsilon K, \partial_\varepsilon W, \partial_\varepsilon\lambda)$.
    
    \item\label{en:cnt:lbl2} Set $\Delta\mathcal{T}_\varepsilon\leftarrow\delta/\|\delta\|$ and ${\mathcal{T}}_{\varepsilon'}\leftarrow\mathcal{T}_\varepsilon+\Delta\varepsilon \Delta\mathcal{T}_\varepsilon$.
    \item Perform Newton steps on $\mathcal{T}_{\varepsilon'}$ by following algorithms \ref{algo:nwttr} and \ref{algo:nwtbd} until ${\rm err}^K(\mathcal{T}_{\varepsilon'})<\epsilon_K$ and ${\rm err}^W(\mathcal{T}_{\varepsilon'})<\epsilon_W$ or up to $n_{max}$ times.
      \item If ${\rm err}^K(\mathcal{T}_{\varepsilon'})<\epsilon_K$ and ${\rm err}^W(\mathcal{T}_{\varepsilon'})<\epsilon_W$, let $n_{it}$ be the number of Newton iterations and go to step \ref{en:cnt:success}.
    \item Go to step \ref{en:cnt:lbl2} with $\Delta\varepsilon \leftarrow \frac{1}{2}\Delta\varepsilon$ up to $n_{\varepsilon}$ times. 
    
    \item Compute $\|t_\theta(K)\|_{\infty}$ and $\|t_\varphi(K)\|_{\infty}$. 
    
    \item If $\|t_\theta(K)\|_{\infty}>\epsilon_t$ and $\|t_\varphi(K)\|_{\infty}>\epsilon_t$, set $N_1\leftarrow 2N_1$ and $N_2\leftarrow 2 N_2$. Otherwise, set $N_1\leftarrow 2N_1$ if $\|t_\theta(K)\|_{\infty}>\|t_\varphi(K)\|_{\infty}$ and $N_2\leftarrow 2N_2$ if $ \|t_\varphi(K)\|_{\infty}>\|t_\theta(K)\|_{\infty}$. Go to step \ref{en:cnt:start}; try up to $n_t$ times.
    \item \label{en:cnt:success} Set $\mathcal{T}_\varepsilon\leftarrow\mathcal{T}_{\varepsilon'}, \Delta\varepsilon \leftarrow \frac{n_{des}}{n_{it}} \Delta\varepsilon$ and go to step \ref{en:cnt:start}.
\end{enumerate}
\end{algo}


\section{An application: the elliptic restricted three body problem}
\label{sec:application}

In this section, we apply our method to the periodic Hamiltonian system given by the Elliptic Restricted Three Body Problem (ERTBP). The strategy we follow consists of taking families of 2D partially hyperbolic invariant tori in the Circular Restricted Three Body Problem (CRTBP) and lift them to the elliptic problem as 3D partially hyperbolic invariant tori through continuation in the eccentricity as described in Section \ref{sec:method}. In practice, we compute parameterizations of 2D generating tori together with their stable, unstable, and center generating bundles.

For the numerical explorations, we have used a Fujitsu Siemens CELSIUS R930N workstation with two 12-core Intel Xeon E5-2630v2 at 2.60GHz running Debian GNU/Linux 11. The algorithms were written in C, compiled with GCC 10.2.1 and linked against Glibc 2.31, LAPACK 3.9.0, and FFTW 3.3.8. The numerical integration was parallelized using OpenMP 4.5.

\subsection{Dynamics}
The Elliptic Restricted Three Body Problem models the motion of a small body of negligible mass in the gravitational vector field generated by two other massive bodies, known as primaries, moving in elliptical orbits according to two-body dynamics. 

Let us denote by $m_1$ and $m_2$ the masses of the primary bodies and let us also define the parameter $\mu:=\frac{m_2}{m_1+m_2}$. It is possible to define a rotating and pulsating frame after a suitable rescaling in space and time where the primaries of mass $m_1$ and $m_2$ are at $(\mu,0,0)$ and $(\mu-1,0,0)$, respectively, and their period of revolution is $2\pi$, see \cite{szebehely} for details. The dynamics for the third body are then given by the periodic Hamiltonian
\begin{equation*}
\begin{split}
H(x,p,\varphi)=\frac{1}{2}[(p_1 & +x_2)^2 + (p_2-q_1)^2 + p_3^2 + x_3^2] \\ 
& - \frac{1}{1+e \cos f}\left[\frac{1}{2}(x_1^2+x_2^2+x_3^2) + \frac{1-\mu}{r_1}+\frac{\mu}{r_2}  \right],\end{split}
\end{equation*}
with $x\in\R^3$ positions, $p\in\R^3$ momenta, $r_1^2=(x_1-\mu)^2+x_2^2 + x_3^2, r_2^2=(x_1+1-\mu)^2+x_2^2+x_3^2$, and $e\in\R$ the eccentricity of the elliptic orbit of the primaries. The true anomaly $f:=2\pi\varphi$, with $\varphi\in\T$, is the angle that parameterizes the orbit of the primaries and moves according to the frequency $\dot\varphi=\hat\alpha=1/2\pi$.

The ERTBP has five fixed points, known as the Lagrange points, denoted by $L_i$ with $i={1,2,...,5}$. The coordinates of these equilibrium points coincide with the coordinates of the fixed points in the circular problem. The collinear solutions $L_1, L_2,$ and $L_3$ are unstable for any combination of $\mu$ and $e$ whereas the stability of the triangular points, $L_4$ and $L_5$, depends on these two parameters. For their values in the Sun-Earth system, that is $\mu=3.040357143\cdot 10^{-6}$ and $e=0.01671123$, the triangular Lagrange points are linearly stable (see e.g. \cite{szebehely} for details). 

The persistence of the fixed points in the elliptic problem does not generalize to other invariant objects. Periodic solutions only exist in resonance with the frequency of the primaries.
Furthermore, the frequency of the primaries, i.e. the external frequency $\hat\alpha$, is added to invariant objects such as periodic orbits and invariant tori of the CRTBP increasing the dimension by one of their counterparts in the elliptic case. That way, (non-resonant) periodic orbits survive as 2D tori while the classical Lissajous and quasi-halo orbits become 3D tori---this will be our case study.

\subsection{From the CRTBP to the ERTBP}
As we mentioned previously, we will lift invariant tori and bundles from the CRTBP to the ERTBP. In this section, we include some details on the families of tori that we computed in the circular problem: tori in the center manifold of the $L_1$ point.

The $L_1$ point in the CRTBP is of type center$\times$center$\times$saddle; that is
\[
   {\rm Spec} {\rm D}X_H(L_1) = \{
      \bmi 2\pi\hat\omega_p^0, -\bmi2\pi\hat\omega_p^0,
      \bmi 2\pi\hat\omega_v^0, -\bmi2\pi\hat\omega_v^0,
      \lambda^0, -\lambda^0
   \},
\]
with a value of the Hamiltonian $h^0$.
Thus, there is a 4-dimensional center manifold generated by the central part of $L_1$ that contains invariant objects.

The Lyapunov center theorem (see e.g. \cite{ meyer-hall-offin, siegel-moser}) ensures that there exists two 2-dimensional manifolds inside the center manifold filled with families of periodic orbits: the planar and the vertical Lyapunov families. These families present bifurcations that give birth to new families such as the halo family and other more exotic orbits (see e.g. \cite{2003aDiDoPa}). We can parameterize the orbits in these families in a large neighborhood of the $L_1$ point by their value of the Hamiltonian.


Besides the 2-dimensional manifolds of periodic orbits, the center manifold is also filled with 2-dimensional tori that exist around periodic orbits with a central part. Let us focus on the tori around vertical Lyapunov orbits. Let $\hat\omega_p$ and $\hat\omega_v$ be the frequencies of any torus such that $\hat\omega_p\rightarrow\hat\omega_p^0$ and $\hat\omega_v\rightarrow\hat\omega_v^0$ when $h\rightarrow h^0$, see Remark \ref{remark:freq} on the non-uniqueness of the frequencies. Following \cite{HM21}, we define the rotation number as
\[
\rho := \hat\omega_p/\hat\omega_v -1.
\]
We can then use the Hamiltonian $h$ and $\rho$ to represent the family of 2-dimensional tori around the vertical Lyapunov orbits. These two parameters uniquely determine each torus.
Analogous representations can be obtained for the tori around planar Lyapunov and halo orbits, see \cite{GomezM01,HM21} for a full description.

For $\mu=3.040357143\cdot10^{-6}$ and $e=0$, we selected 77 equally spaced values of $\rho$ between 0.03565 and 0.0961 for the Sun-Earth system and ``nobilized''\footnote{A
noble number is one whose continued fraction expansion coefficients are equal
to one from a position onward.} them with an absolute tolerance of $1.6 \times 10^{-4}$. Then, we performed continuations in the flying time $T$ for each value of $\omega=\rho$, following the methodology from \cite{HM21}, and obtained the family of tori around vertical Lyapunov orbits in the CRTBP. The results are gathered in Fig. \ref{fig:enrho} where we plot the rotation number and the Hamiltonian as well as the grid size of the parameterizations in the color map for each of the 8971 tori computed.

\begin{figure}[htbp]
\includegraphics{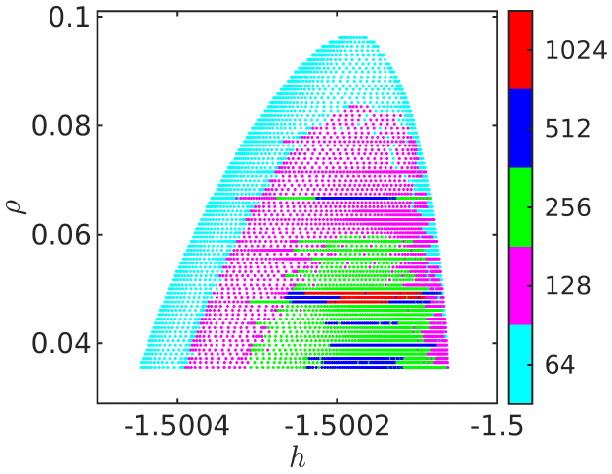}
\caption{\label{fig:enrho}Energy-rotation number representation in the CRTBP for the Lissajous family around vertical orbits around $L_1$ for the Sun-Earth mass parameter. The color map represents the grid size of the parameterizations.} 
\end{figure}

\subsection{Numerical explorations in the ERTBP}
\label{sec:numexp}
In this case study, the generating tori $\K$ are 2-dimensional and the generated tori $\hat\K$ are 3-dimensional. To initialize the algorithms, we use the autonomous tori from the CRTBP with a grid size in the internal phase as given in Fig. \ref{fig:enrho}. For the initial grid size in the external phase, we took $N_2=16$. Since the generating tori in the CRTBP are 1-dimensional, we obtain the 2-dimensional generators by setting the approximate Fourier coefficients $\hat\xi_{kj}$ to zero for $|j|>0$. Equivalently, we can construct the generators as in \eqref{eq:K0}. In algorithm \ref{algo:implementation}, we use $\epsilon^K=10^{-9}, \epsilon^W=10^{-5}, \epsilon^t=10^{-9}, r_t=1/5, n_{max}=6, n_\varepsilon=3, n_{des}=4,$ and $n_t=2$. We set a maximum of $1024$ Fourier coefficients in each phase and  for the multiple shooting approach, we take $m=4$. We set the continuations to reach the eccentricity of the Sun-Earth system; that is, $e=0.01671123$. It is worth mentioning that we set $\epsilon^K=10^{-9}$ for such an exhaustive numerical exploration, but we manage to get errors in the invariance equations of the order of $10^{-15}$ for some tori.

The results of our numerical explorations are gathered in Fig. \ref{fig:toriERTBP}. Out of the 8971 tori computed in the CRTBP, 4457 reached the Sun-Earth eccentricity. Each torus in the Figure is labeled by its value of the internal rotation number and the value of the Hamiltonian of the torus in the CRTBP used to initiate the continuations. Note that we use $h$ simply as a label with no dynamical implications. In the color map of Fig. \ref{fig:toriERTBP}, we represent the grid size in the internal phase $\theta$ (left) and the external phase $\varphi$ (right) only of the tori that reached the eccentricity of the Sun-Earth system.

\begin{figure}[htbp]
\includegraphics{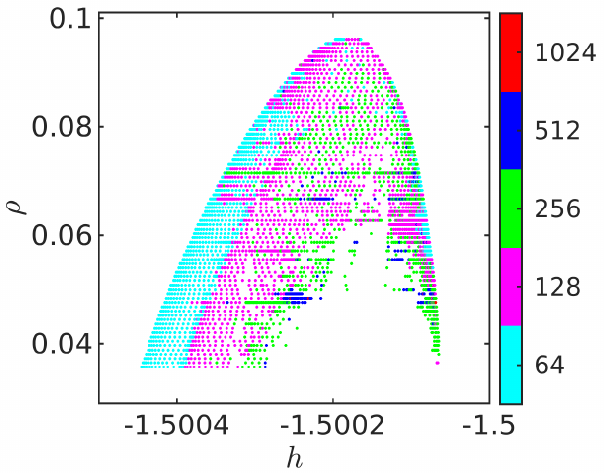}\includegraphics{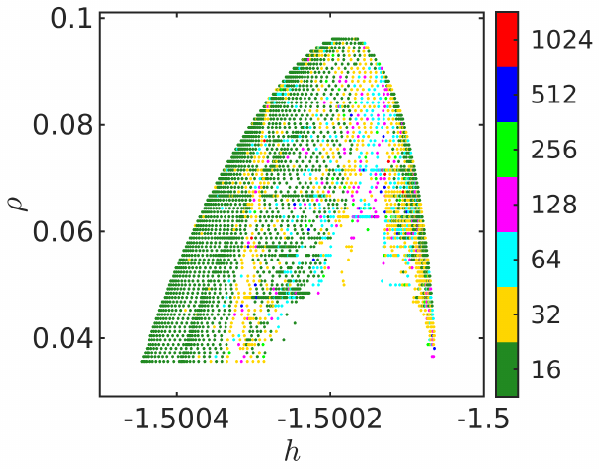}
\caption{\label{fig:toriERTBP}Energy-rotation number representation in the ERTBP for the Lissajous family around vertical orbits around $L_1$ for the Sun-Earth mass parameter. The color map represents the grid size of the internal phase (left) and the external phase (right).} 
\end{figure}

We can observe that not all continuations reach the required value of the eccentricity. Certain empty ``lines", where the continuations fail, are present, which suggests the existence of a dynamical barrier. It turns out that these lines correspond to resonances between the frequencies of the 3-dimensional tori, see Section \ref{sec:reso}. For values of $\rho<0.06$, there is also a gap in the energy-rotation number representation.
The lack of convergence in this region is related to resonances and also to the existence of homoclinic and heteroclinic tangles. We will come back to this issue in Section \ref{sec:poincare}. 

From the numerical explorations we can see that, generally, tori in the ERTBP do not need a large number of approximate Fourier coefficients in the external phase, see Fig. \ref{fig:toriERTBP} (right), which suggests that our tori are more analytic in the phase $\varphi$. Note that $\varphi$ is essentially time and tori tend to be more analytic in the temporal direction, which is in agreement with our results.

Close to resonances, the number of coefficients needed increases. We also observe an increase in the number of coefficients for the tori surrounding the region where homoclinic and heteroclinic tangles are present. Resonances and homoclinic and heteroclinic tangles are responsible for the breakdown of tori, see \cite{Chirikov79,LlaveO06,OlveraS87}. The fact that more coefficients are necessary for the parameterizations close to such cases reveals that the tori are losing regularity because they are breaking down.


In order to lift the autonomous tori to the elliptic problem, we see that it
does not suffice to simply add coefficients in the external phase. When we
compare Fig. \ref{fig:enrho} and Fig. \ref{fig:toriERTBP} (left), the number of
coefficients in the internal phase changes.  Throughout the continuations, it
was seen that for a given torus, sometimes it was necessary to increase the
number of coefficients in the phase $\theta$ and sometimes in the phase
$\varphi$.  It is therefore key to control the decay of the Fourier
coefficients as it has been described in Section \ref{sec:compimpl}.

Lastly, we include in Fig. \ref{fig:3Dplots} some plots in the configuration space of 3-dimensional tori (red) and their 2-dimensional generators (black) for a family with $\rho=0.071461$. They can be seen as ``fattened" with respect to their CRTBP. Such ``fattening" is clearly seen in the results from Section \ref{sec:poincare}.


\begin{figure}[htbp]\label{fig:3Dplots}
\includegraphics{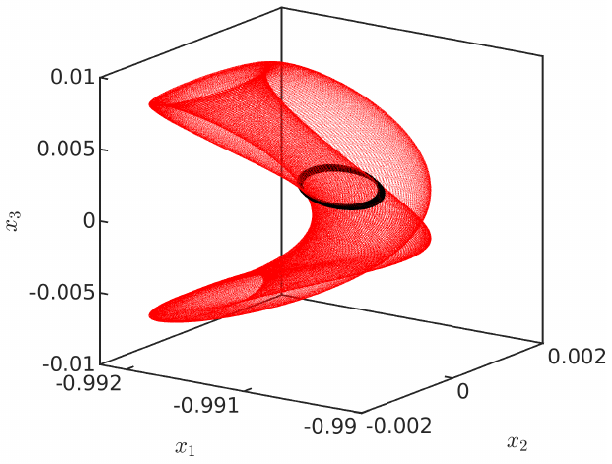}
\includegraphics{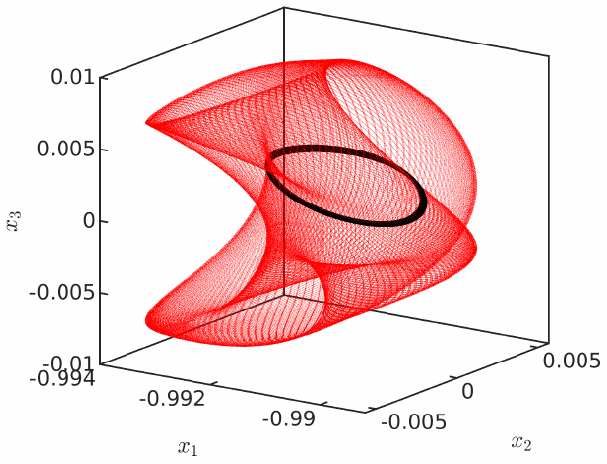}

\includegraphics{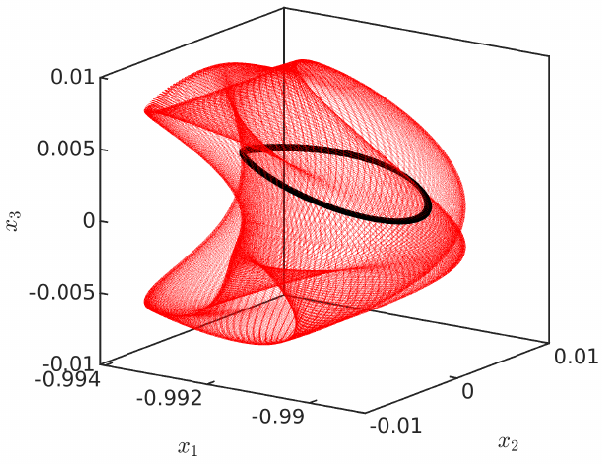}\includegraphics{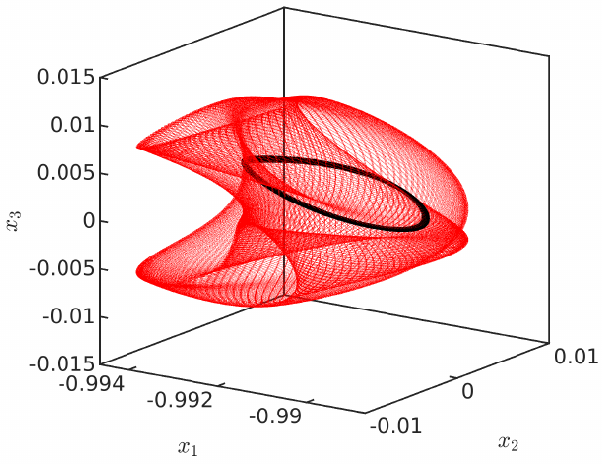}
\includegraphics{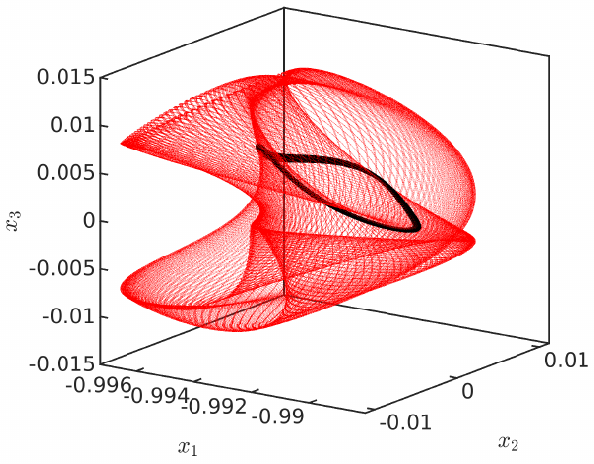}\includegraphics{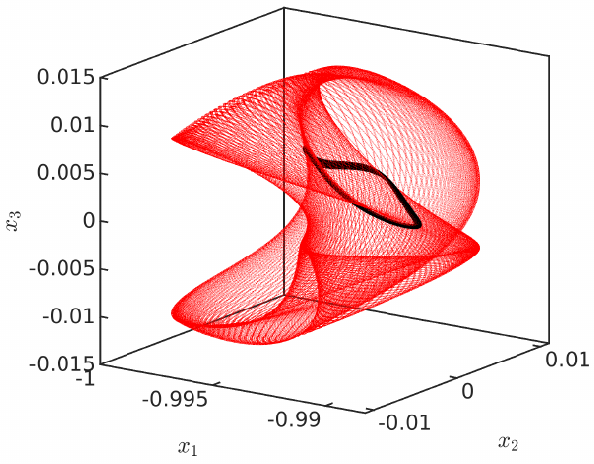}
\includegraphics{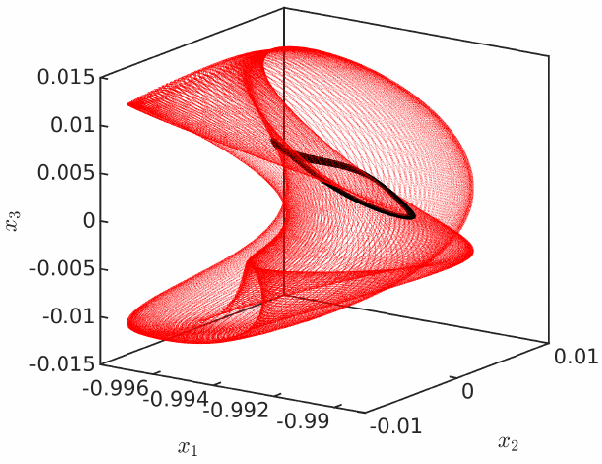}\includegraphics{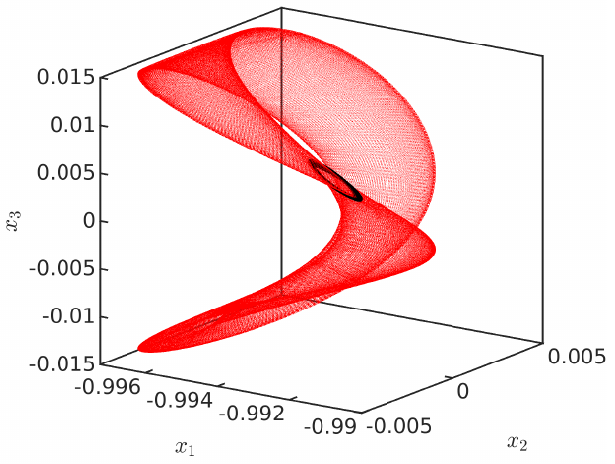}
\caption{Projection onto the configuration space of 3-dimensional generated tori (red) and 2-dimensional generating tori (black) for the family $\rho = 0.071461$.} 
\end{figure}

The behavior of the family of tori for fixed $\rho$ is qualitatively very similar to their autonomous counterparts in the CRTBP, see \cite{HM21}. The family begins with a torus close to a vertical Lyapunov orbit and, with increasing energy, the tori increase in size and start to bend. Then, tori approach a vertical Lyapunov orbit of higher energy where the family collapses.

\begin{remark}
To obtain the results presented in Fig. \ref{fig:toriERTBP}, we did several runs of Algorithm \ref{algo:implementation} for different values of the filtering factor for the lowpass filter described in Section \ref{sec:compimpl}. To give an idea of the computing time, for the family $\rho=0.089837$, we performed continuations in $e$ of 30 tori from the CRTBP to the Sun--Earth ERTBP in $4725.57$ seconds. This is the total computing time, the wall-clock time is roughly the total computing time divided by the number of threads; 24 in our case, so each of the previous continuations was done in roughly 6.6 seconds. Note that not all tori take the same time to compute. Computations close to resonances or close to homoclinic connections can take significantly longer.
\end{remark}

\subsection{Resonances} \label{sec:reso}
The vector of internal frequencies $\hat\omega$ and the vector of external frequencies $\hat\alpha$ were assumed to be sufficiently non-resonant; that is, Diophantine. When this assumption does not hold, there is a dynamical obstruction to the existence of invariant tori. 

Let $\kappa=(\kappa_1,\kappa_2,\kappa_3)$ be a 3-tuple. The vectors of frequencies $\hat\omega=(\frac{\rho}{T},\frac{1}{T})$ and $\hat\alpha=\frac{1}{2\pi}$ are in $p$-order resonance when $|\kappa|_1=p$ and
\begin{equation}
\label{eq:res}
\mathcal{R}_{\kappa}:=\kappa_1\frac{\rho}{T} + \kappa_2\frac{1}{T}+\kappa_3\frac{1}{2\pi},
\end{equation}
becomes zero. The numerical explorations were done for fixed values of $\rho$ but throughout the continuations, the flying time $T$ varies. For certain $\kappa,\rho,$ and $T$, $\mathcal{R}_k$ might become small, revealing that the continuations are approaching resonances. Note that for given $\kappa$, with $|\kappa|_1=p$, $\mathcal{R}_k=0$ defines $p$-resonant lines. To visualize these resonances in the energy-rotation number representation, we first examine the tori computed in the CRTBP. We look for 3-tuples $\bar \kappa$ such that $\mathcal{R}_{\bar \kappa}<\epsilon^{\mathcal{R}}$ up to a maximum order $\bar p$. We took $\epsilon^{\mathcal{R}}=10^{-4}$ and $\bar p =10$. Once we obtain the 3-tuples $\bar \kappa$, we compute the lines $\mathcal{R}_{\bar \kappa}=0$ and obtain $\rho_{\bar{\kappa}} = \rho_{\bar{\kappa}}(T)$. Then, we use the values of $h,\rho$, and $T$ of the grid of tori computed in the CRTBP to obtain through inverse cubic interpolation, for each line $\mathcal{R}_{\bar{\kappa}}=0$, the curve $\rho_{\bar\kappa}=\rho_{\bar{\kappa}}\circ T(h)$. The results are shown in Fig. \ref{fig:res}. 
\begin{figure}[htbp]
    \centering
    \includegraphics{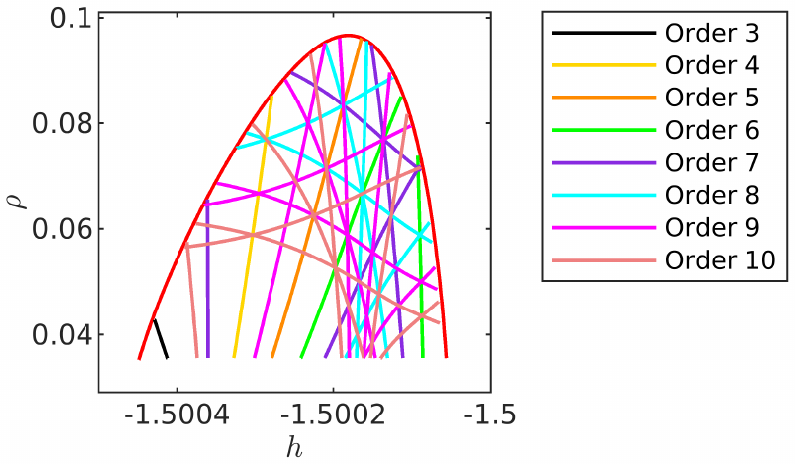}
    \caption{Resonant curves up to order 10 for the tori in the ERTBP for the Sun-Earth mass parameters.}
    \label{fig:res}
\end{figure}

It is clear that when lifting the tori from the CRTBP to the ERTBP, the frequencies can become resonant for a large subset of the autonomous tori. We observe that for $h\gtrsim-1.5002$, there is an accumulation of resonant curves in the energy-rotation number representation which explains the gap where few tori converged. The accumulation of resonant curves also explains why, when comparing Fig. \ref{fig:enrho} and Fig \ref{fig:toriERTBP} (left), more coefficients are necessary for the parameterizations of the tori that reached the Sun-Earth eccentricity.

Note the correspondence between the resonant curves and the  results from Fig. \ref{fig:toriERTBP}---the absence of tori in the region surrounded by tori with $N_2=32$ in Fig. \ref{fig:enrho} (right) corresponds to an order 4 resonance. Other resonant curves can be appreciated but are more subtle---the lower the order of the resonance, the bigger the obstruction to convergence. Due to the presence of resonances, performing continuations for each torus in the CRTBP is a more robust approach than performing them in the ERTBP. When the method tries to compute a resonant torus, it will simply fail and move to the next, whereas if continuations where done in the ERTBP, the method would have to jump through all the resonances it encounters.

\subsection{Poincaré representation}
\label{sec:poincare}
As we pointed out, the Hamiltonian in the ERTBP is not constant. Therefore, we cannot obtain the isoenergetic sections of the center manifold commonly seen in studies for the CRTBP, see e.g. \cite{JorbaM99,GomezM01,ESA3}. Nonetheless, in this section we show some analogous results. In addition to the numerical results already presented, we have also taken tori in the CRTBP around vertical and halo orbits within a level set of the Hamiltonian, lifted them to the Sun-Earth ERTBP, and computed a Poincaré section with $\Sigma = \{x,p\in\R^3 |\  x_3=0, p_3>0\}$.

We explore the level set $H=-1.5002$, value that crosses the region with the large gap, see Fig. \ref{fig:toriERTBP}. The results are gathered in Fig. \ref{fig:2Dsec}. On the left, we show (in red) the intersections of the tori of the CRTBP that, when used as seeds of continuations in the eccentricity $e$, reach the Sun-Earth ERTBP. The sections of the 3-dimensional tori at the end of each of the previous continuations are shown in Fig.~\ref{fig:2Dsec} right. In order to have some reference of where the families of tori end, we show in black the intersections of the last tori computed in the CRTBP around vertical and halo orbits.
\begin{figure}[htbp]
    \centering
    \includegraphics{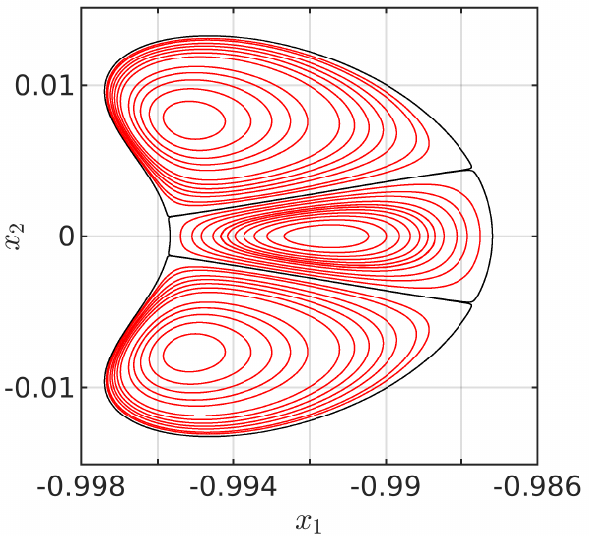}\includegraphics{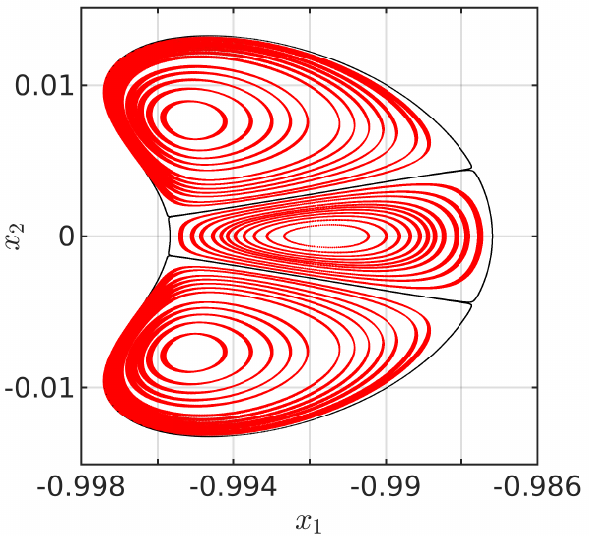}
    \caption{Intersections in red of Lissajous and quasi-halo tori with $\Sigma$ in the CRTBP (left) and in the ERTBP (right). In black, we show the intersections of the last tori computed for each family in the CRTBP. The tori were computed from the level set $H=-1.5002$ in the CRTBP.}
    \label{fig:2Dsec}
\end{figure}

We first observe that since the tori in the ERTBP are 3-dimensional, the intersections with $\Sigma$ are 2-dimensional. This allows us to somewhat see the ``fattening'' of each torus due to the time dependency of the Hamiltonian. This fattening has important implications for the convergence of our method in the region of the energy-rotation number where tori approach homoclinic and heteroclinic connections.

In the CRTBP, there is a range of the Hamiltonian where there exist tori that approach double homoclinic connections of planar Lyapunov orbits. For larger energy values, there exist heteroclinic connections between the so called ``axial'' orbits. These connections act as separatrices between the Lissajous and quasi-halo families. An example of a torus approaching such connections is shown in Fig. 8 of \cite{HM21}, where a torus around a vertical orbit approaches two vertically symmetric quasi-halo tori. The existence of transverse homoclinic and heteroclinic points is one known mechanism for breakdown of invariant tori, see \cite{Chirikov79,LlaveO06,OlveraS87}, and the tori in the CRTBP that approach connections were found for small values of $\rho$, see \cite{HM21}.

Non-resonant periodic orbits in the CRTBP survive as 2-dimensional tori in the ERTBP, so there might be even more connections than in the CRTBP. Together with the fact that tori in the ERTBP have been ``fattened'', suggests that there might be a larger set of tori in the ERTBP that are sufficiently close to homoclinic and heteroclinic connections for the method to fail. 

Lastly, in Fig. \ref{fig:homoclinic_tori}, we show in the configuration space plots of the largest Lissajous (red) and quasi-halo (blue) tori, and their projections, computed for Fig. \ref{fig:2Dsec}. That is, the largest Lissajous and quasi-halo tori that reached the Sun-Earth ERTBP from the level set $H=-1.5002$ in the Sun-Earth CRTBP. For easier visualization, of the two symmetric quasi-halo tori of the energy level only one is shown. We can observe the proximity between the tori in the configuration space (similar plots can be obtained with other projections) and how they almost merge.
\begin{figure}[htbp] 
    \centering
    \includegraphics[scale=0.95]{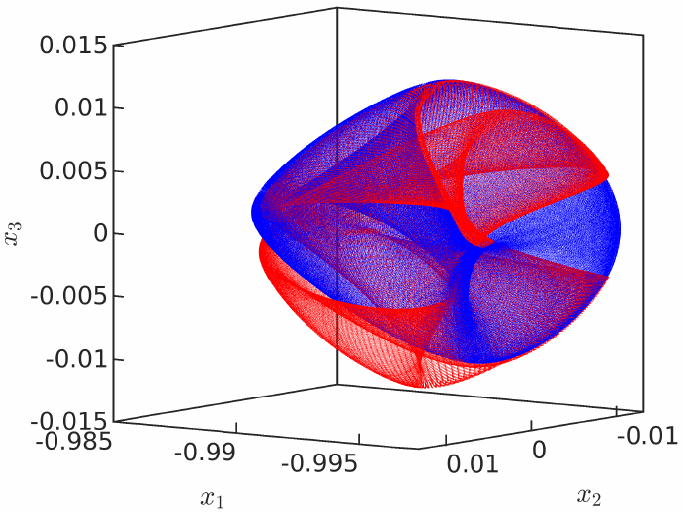}\includegraphics[scale=0.95]{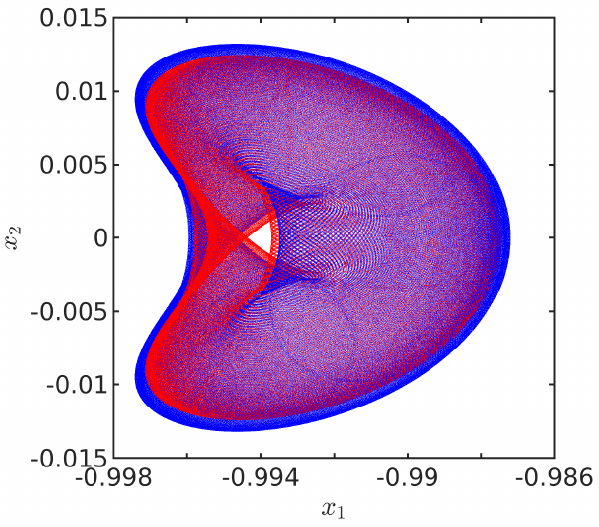} \includegraphics[scale=0.95]{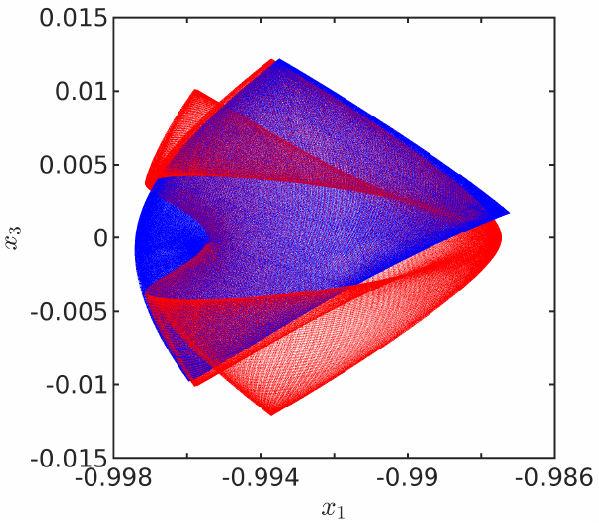}\quad \includegraphics[scale=0.95]{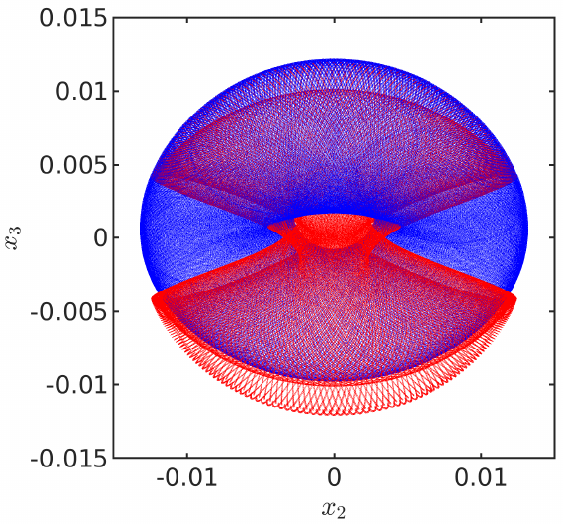}
    \caption{Largest Lissajous (red) and quasi-halo (blue) tori, and their projections, computed in the Sun-Earth ERTBP from the level set $H=-1.5002$ in the CRTBP}
    \label{fig:homoclinic_tori}
\end{figure}


\section{Conclusions}
\label{sec:conclusion}

In this work, we presented a general and efficient method to compute parameterizations of invariant tori and their bundles in quasi-periodic Hamiltonian systems with an arbitrary number of frequencies. To this end, we generalized flow map parameterization methods applicable in autonomous settings. This generalization required the development of the notions of fiberwise isotropy of invariant tori and fiberwise Lagrangian subspaces. We also obtained primitive functions of quasi-periodic time-$t$ maps and, in a more general framework, we introduced the concepts of fiberwise symplectic deformations and moment maps. All of these notions are vital for our constructions and for eventual proofs, using KAM techniques, of the existence of invariant objects and their regularity with respect to parameters.

We manage to reduce the dimension of the phase space by considering appropriate functional equations. We also reduce the dimension of our objects by using flow maps. Instead of using periods associated to the Hamiltonian we use periods associated to the internal frequencies of tori---which allow us to directly compute invariant tori in quasi-periodic Hamiltonian systems from tori in autonomous systems in an efficient and general setting.  We also provided a continuation method, under the parameterization method paradigm, for continuation of invariant tori and bundles with respect to parameters of the Hamiltonian.

We tested our method in a periodic Hamiltonian system: the Elliptic Restricted Three Body Problem (ERTBP). We computed a large grid of 3-dimensional invariant tori and their invariant bundles, gaining qualitative insight into the behavior of the ERTBP. From the numerical explorations, we observed that tori and bundles are more regular in the external phase. This implies that less coefficients are required for their parameterization. Consequently, it is advantageous to include external phases in the parameterizations. Additionally, we observed that for a large set of tori the frequencies can be in resonance; which is an obstruction to their existence. Therefore, computing tori by lifting them from an autonomous system is a more robust approach than performing continuations directly in the ERTBP.


\appendix
\newpage
\section{On the primitive function of $\phi_t$}\label{ap:primitive}
We dedicate this appendix to the explicit expression of the primitive function of the $t-$depending family of fiberwise exact symplectomorphism $\phi_t: U\times\T^{\ell}\to U$, for $U\subset\R^{2n}$ open and $t\in\R$, including its derivatives.

\begin{lemma}\label{lemma:primitive}
The primitive function $p_t:U\times\T^\ell\to \R$ of the fiberwise exact symplectomorphism $\phi_t:U\times\T^\ell\to U$ is given by 
\begin{equation}\label{eq:prim}
\begin{split}
	&p_t(z,\varphi)=\\
	&\int_0^t \left( a\bigl(\phi_s(z,\varphi)\bigr)^\top X_H\bigl(\phi_s(z,\varphi),R_s(\varphi)\bigr) -
        H\bigl(\phi_s(z,\varphi),R_s(\varphi)\bigr)  \right) ds,  
\end{split} 
\end{equation}
where $R_s(\varphi) = \varphi + \hat\alpha s$. That is, the primitive function $p_t$, as given above, satisfies  
\begin{equation}
{\rm D}_z p_t(z,\varphi) = a\big(\phi_t(z,\varphi)\big)^\top{\rm D}_z\phi_t(z,\varphi) - a(z)^\top. \label{eq:ap_primitiveZ}
\end{equation}
Moreover,
\begin{equation}
{\rm D}_\varphi p_t(z,\varphi) = a\big(\phi_t(z,\varphi)\big)^\top{\rm D}_\varphi \phi_t(z,\varphi) - \int^t_0 \Big({\rm D}_\varphi H\big(\phi_s(z,\varphi),R_s(\varphi)\big) \Big)ds. \label{eq:ap_primitivevarphi}
\end{equation}
\end{lemma}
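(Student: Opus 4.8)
The plan is to verify the two formulas by differentiating the integral expression \eqref{eq:prim} and using the defining properties of the Hamiltonian flow together with the exactness relation \eqref{eq:exactness}. For \eqref{eq:ap_primitiveZ}, first I would differentiate \eqref{eq:prim} with respect to $z$. Bringing ${\rm D}_z$ inside the integral and using the chain rule, each term in the integrand produces a contribution involving ${\rm D}_z\phi_s(z,\varphi)$. The key move is to recognize that the $s$-derivative of the quantity $a(\phi_s(z,\varphi))^\top{\rm D}_z\phi_s(z,\varphi)$ matches (up to the integrand) the integrand of ${\rm D}_z p_t$. Concretely, I would compute $\frac{d}{ds}\bigl(a(\phi_s(z,\varphi))^\top {\rm D}_z\phi_s(z,\varphi)\bigr)$ using the variational equation $\frac{d}{ds}{\rm D}_z\phi_s = {\rm D}_z X_H(\phi_s,R_s(\varphi)){\rm D}_z\phi_s$ and the flow equation $\frac{d}{ds}\phi_s = X_H(\phi_s,R_s(\varphi))$, and show that this derivative equals ${\rm D}_z\bigl(a(\phi_s)^\top X_H(\phi_s,R_s(\varphi)) - H(\phi_s,R_s(\varphi))\bigr)$. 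This last identity is exactly where exactness \eqref{eq:exactness} and the Hamiltonian relation $\Omega\, X_H = {\rm D}_z H^\top$ enter: the antisymmetric part $\bigl({\rm D}a^\top - {\rm D}a\bigr) = \Omega$ converts the ``wrong-sided'' terms into the Hamiltonian differential. Then integrating from $0$ to $t$ and using $\phi_0 = \id$ (so ${\rm D}_z\phi_0 = I_{2n}$) yields \eqref{eq:ap_primitiveZ}.

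For \eqref{eq:ap_primitivevarphi}, the strategy is the same but now differentiating \eqref{eq:prim} with respect to $\varphi$. Note that $\varphi$ enters both through $\phi_s(z,\varphi)$ and through $R_s(\varphi) = \varphi + \hat\alpha s$, and crucially ${\rm D}_\varphi R_s(\varphi) = I_\ell$ independent of $s$. I would again compute $\frac{d}{ds}\bigl(a(\phi_s(z,\varphi))^\top {\rm D}_\varphi\phi_s(z,\varphi)\bigr)$, using the variational equation for ${\rm D}_\varphi\phi_s$, namely $\frac{d}{ds}{\rm D}_\varphi\phi_s = {\rm D}_z X_H(\phi_s,R_s(\varphi)){\rm D}_\varphi\phi_s + {\rm D}_\varphi X_H(\phi_s,R_s(\varphi))$, which now carries the extra inhomogeneous term coming from the explicit $\varphi$-dependence. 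Repeating the cancellation computation, the terms organize so that most of the integrand telescopes into $a(\phi_t)^\top{\rm D}_\varphi\phi_t$ (using $\phi_0 = \id$, ${\rm D}_\varphi\phi_0 = O$), but the inhomogeneous piece leaves behind precisely the leftover integral $-\int_0^t {\rm D}_\varphi H(\phi_s(z,\varphi),R_s(\varphi))\,ds$. Here one must be careful that ${\rm D}_\varphi H$ denotes the partial derivative of $H$ with respect to its second argument evaluated along the trajectory, not the total $\varphi$-derivative.

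The main obstacle I anticipate is bookkeeping the cancellations correctly in the $\varphi$-derivative case, since there the chain rule generates both a ${\rm D}_z\phi_s$-type contribution and a direct ${\rm D}_\varphi$-contribution from the arguments $R_s(\varphi)$, and these must be combined with the inhomogeneous variational term in just the right way. A secondary subtlety is the matrix transpose conventions: $a(z)^\top$ is a row vector (matrix representation of the action form), so all manipulations of $a(\phi_s)^\top{\rm D}\phi_s$ are row-vector times matrix, and one has to track which differentials act on which slot. The cleanest way to organize the proof is probably to establish the key pointwise identity $\frac{d}{ds}\bigl(a(\phi_s)^\top{\rm D}_z\phi_s\bigr) = {\rm D}_z\bigl(a(\phi_s)^\top X_H(\phi_s,R_s(\varphi)) - H(\phi_s,R_s(\varphi))\bigr)$ as a standalone lemma (this is the classical statement that the Hamiltonian flow preserves the Liouville form up to an exact term), then apply it twice --- once for the $z$-variable and once, with the obvious modification for the explicit time dependence, for the $\varphi$-variable --- and simply integrate. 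Everything else is routine calculus under the integral sign, justified by the assumed smoothness of all objects.
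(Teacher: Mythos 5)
Your proposal is correct and follows essentially the same route as the paper's proof: differentiate \eqref{eq:prim} under the integral sign, use the flow and variational equations together with ${\rm D}_z H=-X_H^\top\Omega$ and the exactness relation \eqref{eq:exactness} to recognize the integrand as $\frac{d}{ds}\bigl(a(\phi_s)^\top{\rm D}_z\phi_s\bigr)$ (resp.\ $\frac{d}{ds}\bigl(a(\phi_s)^\top{\rm D}_\varphi\phi_s\bigr)-{\rm D}_\varphi H(\phi_s,R_s(\varphi))$), and integrate using $\phi_0=\id$, ${\rm D}_\varphi\phi_0=0$. The subtleties you flag (the inhomogeneous term in the $\varphi$-variational equation and the interpretation of ${\rm D}_\varphi H$ as a partial derivative) are exactly the points the paper's computation handles.
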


\begin{proof}
In order to prove $p_t$ is as given in \eqref{eq:prim}, it suffices to show that $p_t$ satisfies  \eqref{eq:ap_primitiveZ}. Let us begin by differentiating $p_t$ with respect to $z$
\begin{equation*}
\begin{split}
{\rm D}_z p_t(z,\varphi) =& \int_0^t\Big(X_H\big(\phi_s(z,\varphi),R_s(\varphi)\big)^\top{\rm D} a\big(\phi_s(z,\varphi)\big){\rm D}_z\phi_s(z,\varphi) \\
& \phantom{\smash{\int_0^t\Big(}} + a\big(\phi_s(z,\varphi)\big)^\top{\rm D}_z X_H(\phi_s(z,\varphi),R_s(\varphi)) {\rm D}_z\phi_s(z,\varphi)\\
& \phantom{\smash{\int_0^t\Big(}}- {\rm D}_z H\big(\phi_s(z,\varphi),R_s(\varphi)\big){\rm D}_z\phi_s(z,\varphi)\Big) ds.
\end{split}
\end{equation*}
We use that ${\rm D}_zH(z,\varphi)=-X_H^\top(z,\varphi)\Omega(z)$ and that $\Omega(z) = {\rm D}a(z)^\top - {\rm D}a(z)$ to obtain
\begin{equation*}
    \begin{split}
    {\rm D}_z p_t(z,\varphi)
    &=
      \int_0^t\Big(X_H\big(\phi_s(z,\varphi),R_s(\varphi)\big)^\top {\rm D}
      a\big(\phi_s(z,\varphi)\big)^\top{\rm D}_z\phi_s(z,\varphi) \\
    & \phantom{\smash{=\int_0^t\Big(}} + a\big(\phi_s(z,\varphi)\big)^\top{\rm
       D}_z X_H\big(\phi_s(z,\varphi),R_s(\varphi)\big) {\rm
       D}_z\phi_s(z,\varphi) \Big) ds \\
    & =\int_0^t\bigg(\frac{d}{ds} \Big(a\big(\phi_s(z,\varphi)\big)\Big)^\top
        {\rm D}_z \phi_s(z,\varphi) 
        \\
    &\phantom{=\smash{\int_0^t\bigg(}} 
       + a\big(\phi_s(z,\varphi)\big)^\top \frac{d}{ds}\Big({\rm D}_z
       \phi_s(z,\varphi)\Big) 
       \bigg) ds \\
    &  = \int_0^t\frac{d}{ds}\Big(a\big(\phi_s(z,\varphi)\big)^\top{\rm D}_z\phi_s(z,\varphi) \Big)ds\\
    &= a\big(\phi_t(z,\varphi)\big)^\top{\rm D}_z \phi_t(z,\varphi) - a(z).
    \end{split}
\end{equation*}
Let us now differentiate $p_t$ with respect to $\varphi$ 
\begin{align*}
	{\rm D}_\varphi p_t(z,\varphi)&=
	 \int_0^t\Big(X_H\big(\phi_s(z,
	 \varphi),R_s(\varphi)\big)^\top {\rm D}
	 a\big(\phi_s(z,\varphi)\big){\rm D}
	 _\varphi\phi_s(z,\varphi) \\
	&   \phantom{=\smash{\int_0^t\bigg(}} 
	+a\big(\phi_s(z,\varphi)\big)^\top{\rm D}_z
	 X_H\big(\phi_s(z,\varphi),R_s(\varphi)\big)
	 {\rm D}_\varphi\phi_s(z,\varphi) \\
	&   \phantom{=\smash{\int_0^t\bigg(}}+
	 a\big(\phi_s(z,\varphi)\big)^\top {\rm D}
	_\varphi X_H\big(\phi_s(z,
	\varphi),R_s(\varphi)\big)\\
	& \phantom{=\smash{\int_0^t\bigg(}}- {\rm D}_z 
	H\big(\phi_s(z,\varphi),R_s(\varphi)\big)
	{\rm D}_\varphi\phi_s(z,\varphi)  \\
	&	\phantom{=\smash{\int_0^t\bigg(}} - 
	{\rm D}_\varphi H\big(\phi_s(z,		
	\varphi),R_s(\varphi)\big)\Big) ds,    
\end{align*}
which we can rewrite as
\begin{align*}
    {\rm D}_\varphi p_t(z,\varphi)&=
     \int_0^t\Big(X_H\big(\phi_s(z,
     \varphi),R_s(\varphi)\big)^\top {\rm D}
     a\big(\phi_s(z,\varphi)\big)^\top{\rm D}
     _\varphi\phi_s(z,\varphi)\\
    &	 \phantom{=\smash{\int_0^t\bigg(}} 
    +a\big(\phi_s(z,\varphi)
    \big)^\top\frac{d}{ds}\Big({\rm D}_\varphi 
    \phi_s(z,\varphi)\Big)\\
    & \phantom{=\smash{\int_0^t\bigg(}} 
     - {\rm D}_\varphi 
    H\big(\phi_s(z,\varphi),R_s(\varphi)\big)
    \Big)ds \\
    &	=\int_0^t\bigg(\frac{d}{ds}
    \Big(a\big(\phi_s(z,\varphi)\big)^\top {\rm D}
    _\varphi\phi_s(z,\varphi) \Big)\\
    &	\phantom{=\smash{\int_0^t\bigg(}} 
    - {\rm D}_\varphi 
    H\big(\phi_s(z,\varphi),R_s(\varphi)\big)
    \bigg) ds \\
    &	= a\big(\phi_t(z,\varphi)\big)^\top{\rm D}
    _\varphi \phi_t(z,\varphi) - \int^t_0 {\rm D}
    _\varphi H\big(\phi_s(z,\varphi),R_s(\varphi)
    \big)\ ds.
\end{align*}

\end{proof}
\section{Fiberwise isotropy of invariant tori}
\label{ap:isotropy}
Invariant tori have geometric properties that we leverage in our method. In this appendix, we prove the fiberwise isotropy of $\mathcal{K}$.

\begin{lemma}\label{lemma:iso}
Assume $K:\T^{d-1}\times\T^{\ell}\to U$ is a parameterization of a $\phi_T$-invariant torus $\mathcal{K}$ and that the dynamics on $\mathcal{K}$ are conjugate to an ergodic rotation, that is
\[
\phi_T\big(K(\theta,\varphi),\varphi\big) = K(\theta+\omega,\varphi+\alpha),
\]
with $(\omega,\alpha)$ rationally independent. For each $\varphi \in\T^\ell$, consider parameterizations $K_\varphi:\T^{d-1}\to U$ of tori $\mathcal{K}_\varphi\subset\mathcal{K}$ such that $K_\varphi(\theta)=K(\theta,\varphi)$. Then, for each $\varphi$, $\mathcal{K}_\varphi$ is an isotropic torus and thus $\mathcal{K}$ is fiberwise isotropic.
\end{lemma}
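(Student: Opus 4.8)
The plan is to derive an invariance equation for the pullback two-form $K_\varphi^*\boldsymbol{\omega}$ and exploit that an ergodic rotation on a torus has no nontrivial invariant sections that average to something nonzero, forcing the form to vanish. Concretely, define the matrix-valued function $\Gamma_\varphi(\theta) := {\rm D}_\theta K_\varphi(\theta)^\top \Omega\big(K_\varphi(\theta)\big) {\rm D}_\theta K_\varphi(\theta)$, which is the coordinate representation of $K_\varphi^*\boldsymbol{\omega}$; note it is antisymmetric. The goal is to show $\Gamma_\varphi \equiv 0$ for every $\varphi$.

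First I would write down the transport property of the tangent frame. From the invariance equation \eqref{eq:inv_flowmap}, differentiating in $\theta$ gives \eqref{eq:invDK}, which in the fiberwise notation reads
\[
{\rm D}_z\phi_T\big(K_\varphi(\theta),\varphi\big){\rm D}_\theta K_\varphi(\theta) = {\rm D}_\theta K_{\varphi+\alpha}(\theta+\omega).
\]
Next I would plug this into $\Gamma_{\varphi+\alpha}(\theta+\omega)$ and use the fiberwise symplecticity of $\phi_T$, equation \eqref{eq:symplecticity}, namely ${\rm D}_z\phi_T(z,\varphi)^\top \Omega\big(\phi_T(z,\varphi)\big){\rm D}_z\phi_T(z,\varphi) = \Omega(z)$ with $z = K_\varphi(\theta)$ and $\phi_T(z,\varphi) = K_{\varphi+\alpha}(\theta+\omega)$. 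This yields exactly
\[
\Gamma_{\varphi+\alpha}(\theta+\omega) = {\rm D}_\theta K_\varphi(\theta)^\top \Omega\big(K_\varphi(\theta)\big) {\rm D}_\theta K_\varphi(\theta) = \Gamma_\varphi(\theta),
\]
so the family $\{\Gamma_\varphi\}$ is invariant under the rotation $(\theta,\varphi)\mapsto(\theta+\omega,\varphi+\alpha)$ on $\T^{d-1}\times\T^\ell$.

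Then I would pass to Fourier coefficients in $(\theta,\varphi)$: writing $\Gamma(\theta,\varphi) = \sum_{k,j}\hat\Gamma_{kj}\,\ee^{\bmi 2\pi(k\theta+j\varphi)}$, the invariance forces $\hat\Gamma_{kj}\big(\ee^{\bmi 2\pi(k\omega+j\alpha)}-1\big)=0$, so by the non-resonance (ergodicity) of $(\omega,\alpha)$ all coefficients with $(k,j)\neq 0$ vanish, and $\Gamma(\theta,\varphi)\equiv \hat\Gamma_{00} = \langle\Gamma\rangle$, a constant antisymmetric matrix. The remaining step — and the one requiring the real idea rather than bookkeeping — is to show this constant is zero. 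I would argue that the constant value equals the average over $\theta$ of a pullback of a closed form: since $\boldsymbol{\omega}$ is closed, $\Gamma_\varphi\,d\theta$ is a closed two-form on $\T^{d-1}$, and its cohomology class is obstructed because $\boldsymbol{\omega}$ is exact, $\boldsymbol{\omega} = d\boldsymbol{\alpha}$ with action form $\boldsymbol{\alpha}$ having matrix representation $a$. Thus $\Gamma_\varphi = {\rm D}_\theta\big(a(K_\varphi)^\top {\rm D}_\theta K_\varphi\big) - $ (its transpose)-type expression is a total $\theta$-derivative, so $\langle\Gamma_\varphi\rangle = 0$; combined with the constancy just established, $\Gamma_\varphi\equiv 0$ for each $\varphi$, i.e. $K_\varphi^*\boldsymbol{\omega}=0$, which is precisely the assertion that each $\mathcal{K}_\varphi$ is isotropic and hence $\mathcal{K}$ is fiberwise isotropic. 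The main obstacle is making the exactness argument clean: one must be careful that $a$ is only defined on $U$ (not globally on a torus), so the "total derivative" form of $\Gamma_\varphi$ should be obtained honestly from \eqref{eq:exactness} by expanding ${\rm D}_\theta\big(a(K_\varphi(\theta))\big)$ via the chain rule and recognizing $\Omega = {\rm D}a^\top - {\rm D}a$, after which periodicity in $\theta$ kills the average.
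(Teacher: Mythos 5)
Your proposal is correct and follows essentially the same route as the paper's proof: symplecticity of $\phi_T$ plus the transported tangent frame give the invariance $\Gamma_{\varphi+\alpha}(\theta+\omega)=\Gamma_\varphi(\theta)$, non-resonance forces constancy (you via Fourier coefficients, the paper via density of the orbit — a cosmetic difference), and exactness $\Omega={\rm D}a^\top-{\rm D}a$ exhibits each entry as a total $\theta$-derivative with zero average, hence the constant vanishes. Your closing caveat about deriving the total-derivative form honestly from \eqref{eq:exactness} is exactly the computation the paper carries out.
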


\begin{proof}
We need to show that $K_\varphi^*\bm{\omega}=0$ for all $\varphi\in\T^\ell$. In coordinates, this condition reads 
\begin{equation}
\label{eq:ap_iso}
    {\rm D}_\theta K_\varphi(\theta)^\top\Omega\big(K_\varphi(\theta)\big){\rm D}_\theta K_\varphi(\theta) = 0,
\end{equation}
for all $\varphi$. Let us first introduce the definitions $\bar\theta:=\theta+\omega$ and $\bar\varphi := \varphi + \alpha$. From the symplecticity of $\phi_T$ we have the identity 
\begin{equation}
\label{eq:ap_symplecticity}
    \Omega\big(K_\varphi(\theta)\big) = {\rm D}_z\phi_T\big(K_\varphi(\theta),\varphi\big)^\top \Omega\big(K_{\bar\varphi}(\bar\theta)\big) {\rm D}_z \phi_T\big(K_\varphi(\theta),\varphi\big).
\end{equation}
From the invariance of $\mathcal{K}$, the tangent bundle of $\mathcal{K}_\varphi$, parameterized by the columns of ${\rm D}_\theta K_\varphi$, is transported by the differential of $\phi_T$ as
\begin{equation}
\label{eq:ap_dif_inv}
    {\rm D}_z \phi_T\big(K_\varphi(\theta),\varphi\big) {\rm D}_\theta K_\varphi(\theta) = {\rm D}_\theta K_{\bar\varphi}(\bar\theta).
\end{equation}
Then, using \eqref{eq:ap_symplecticity}, \eqref{eq:ap_dif_inv}, and rewriting $K_\varphi(\theta)$ as $K(\theta,\varphi)$, we have
\[
{\rm D}_\theta K(\theta,\varphi)^\top\Omega\big(K(\theta,\varphi)\big) {\rm D}_\theta K(\theta,\varphi) = {\rm D}_\theta K(\bar\theta,\bar\varphi)^\top\Omega\big(K(\bar\theta,\bar\varphi)\big) {\rm D}_\theta K(\bar\theta,\bar\varphi),
\]
which is constant in $\theta,\varphi$ due to the ergodicity of $(\theta,\varphi)\mapsto(\theta+\omega,\varphi+\alpha)$, coming from the fact that, for fixed $\theta,\varphi$, $\{(\theta+j\omega,\varphi+j\alpha)\}_{j\in\Z}$ is dense in $\T^{d-1}\times\T^\ell$.

Since $\Omega(z) = {\rm D}a(z)^\top - {\rm D}a(z)$, the left-hand side of \eqref{eq:ap_iso} reads
\begin{equation}
\label{eq:ap_iso2}
{\rm D}_\theta\Big( a\big(K(\theta,\varphi)\big)\Big)^\top {\rm D}_\theta K(\theta,\varphi) - {\rm D}_\theta K(\theta,\varphi)^\top {\rm D}_\theta \Big(a\big(K(\theta,\varphi)\big)\Big)
\end{equation}
and each entry $ij$ of \eqref{eq:ap_iso2} can be expressed as
\begin{equation}
\label{eq:ap_iso3}
\begin{split}
 \phantom{=}&\sum_{ k} \partial_{\theta^i}\Big( a_k\big(K(\theta,\varphi)\big)\Big)\partial_{\theta^j} K^k(\theta,\varphi) - \partial_{\theta^j}\Big(a_k\big(K(\theta,\varphi)\big)\Big) \partial_{\theta^i} K^k(\theta,\varphi) \\
=&\sum_{k} \partial_{\theta^i}\Big( a_k\big(K(\theta,\varphi)\big)\partial_{\theta^j} K^k(\theta,\varphi)\Big) - a_k\big(K(\theta,\varphi)\big) \partial^2_{\theta^i\theta^j}K^k(\theta,\varphi) \\
\phantom{=}& - \partial_{\theta^j}\Big(a_k\big(K(\theta,\varphi)\big) \partial_{\theta^i} K^k(\theta,\varphi) \Big) + a_k\big(K(\theta,\varphi)\big) \partial^2_{\theta^i\theta^j}K^k(\theta,\varphi)\\
=& \sum_{k} \partial_{\theta^i} \Big(a_k\big(K(\theta,\varphi)\big)\partial_{\theta^j} K^k(\theta,\varphi) \Big) - \partial_{\theta^j}\Big(a_k\big(K(\theta,\varphi)\big) \partial_{\theta^i} K^k(\theta,\varphi) \Big).
\end{split}
\end{equation}
The averages of each entry $ij$ of \eqref{eq:ap_iso2} are zero since we are taking averages of derivatives with respect to $\theta$. Then, since \eqref{eq:ap_iso2} is constant, it is identically zero which concludes the proof.
\end{proof}

\section{Fiberwise Lagrangian space generated by the subframe $L$} \label{ap:fiberwise_lag}
The subframe $L$, used to construct the adapted frame $P$, needs to be a fiberwise Lagrangian subframe. We dedicate this section to the following lemma.

\begin{lemma} \label{lem:lag}
For each $\varphi\in\T^\ell$, the subframe $L_{\varphi}(\theta):=L(\theta,\varphi)$
constructed as
\begin{equation*}
     L(\theta, \varphi)=\Big({\rm D}_{\theta}K(\theta,\varphi) \quad \mathcal{X}_H(\theta,\varphi) \quad W(\theta,\varphi) \Big),
\end{equation*}
where 
\begin{equation*}
\mathcal{X}_H(\theta,\varphi):=X_H\big(K(\theta,\varphi),\varphi\big) - {\rm D}_\varphi  K(\theta,\varphi)\hat{\alpha},    
\end{equation*}
generates a Lagrangian subspace and, consequently, $L$ is a fiberwise Lagrangian subframe.
\end{lemma}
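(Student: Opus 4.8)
The plan is to show that the $n\times n$ Gram-type matrix
\[
L_\varphi(\theta)^\top \,\Omega\bigl(K(\theta,\varphi)\bigr)\, L_\varphi(\theta)
\]
vanishes identically by checking it block by block, using the columns of $L$ in the order $\bigl({\rm D}_\theta K,\ \mathcal X_H,\ W\bigr)$. First I would record that the $(d-1)\times(d-1)$ block ${\rm D}_\theta K_\varphi^\top\,\Omega\,{\rm D}_\theta K_\varphi$ is zero — this is exactly the fiberwise isotropy of $\mathcal K$ proved in Lemma~\ref{lemma:iso} of Appendix~\ref{ap:isotropy}. So the tangent-tangent block is already done, and it remains to handle the pairings of ${\rm D}_\theta K$, $\mathcal X_H$ and $W$ among themselves.

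Next I would exploit the common invariance under ${\rm D}_z\phi_T$: by \eqref{eq:invDK}, \eqref{eq:invW} and \eqref{eq:inv_zgeo}, each of the columns of $L$ transforms as ${\rm D}_z\phi_T(K(\theta,\varphi),\varphi)\,v(\theta,\varphi)=v(\bar\theta,\bar\varphi)\,\Lambda$ in the appropriate sense. Combining this with the fiberwise symplecticity \eqref{eq:symplecticity} shows that
\[
L_\varphi(\theta)^\top\Omega\bigl(K(\theta,\varphi)\bigr)L_\varphi(\theta)
=\Lambda^{\!\top}\, L_{\bar\varphi}(\bar\theta)^\top\Omega\bigl(K(\bar\theta,\bar\varphi)\bigr)L_{\bar\varphi}(\bar\theta)\,\Lambda .
\]
Since $\Lambda={\rm diag}(I_{n-1},\lambda)$, writing $C(\theta,\varphi)$ for the left-hand side this reads $C=\Lambda^{\!\top}(C\circ R)\Lambda$, where $R$ is the rotation $(\theta,\varphi)\mapsto(\bar\theta,\bar\varphi)$. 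For every entry this is a scalar cohomological relation $C_{pq}=\lambda^{\,\epsilon_p+\epsilon_q}\,C_{pq}\!\circ R$ with $\epsilon_p\in\{0,1\}$. When $\epsilon_p=\epsilon_q=0$ the entry is invariant under the ergodic rotation, hence constant; when $\epsilon_p+\epsilon_q\ge 1$ and $\lambda\neq 1$ the only continuous solution is $C_{pq}\equiv 0$ (taking sup-norms forces $\|C_{pq}\|_\infty=|\lambda|^{\epsilon_p+\epsilon_q}\|C_{pq}\|_\infty$). This instantly kills every block that involves the $W$-column, and also the mixed tangent--$\mathcal X_H$ type blocks reduce to constants.

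So the residual work is to show the remaining \emph{constant} blocks — ${\rm D}_\theta K^\top\Omega\,\mathcal X_H$ and $\mathcal X_H^\top\Omega\,\mathcal X_H$ (the latter is automatically zero by antisymmetry of $\Omega$) — actually vanish. For the tangent--$\mathcal X_H$ block I would compute $\tfrac{d}{dt}\bigl[{\rm D}_\theta K(\theta+\omega t,\varphi+\alpha t)^\top\Omega(\cdots)\,\mathcal X_H(\theta+\omega t,\ldots)\bigr]$, or more directly use the vector-field form of the invariance equation from the Remark after \eqref{eq:inv_ext}, namely $X_H\circ\hat K={\rm D}_{\hat\theta}\hat K\,\hat\omega+{\rm D}_\varphi\hat K\,\hat\alpha$; restricted to the generating torus this expresses $\mathcal X_H$ in terms of $\partial_\theta K$ and $\partial_{\theta_d}$-type data, so that ${\rm D}_\theta K^\top\Omega\,\mathcal X_H$ becomes a combination of entries of the isotropy matrix (zero) plus an average of an exact $\theta$-derivative (zero average, constant, hence zero). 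I expect this last step — correctly unwinding $\mathcal X_H$ via the $\hat K$/$K$ relation and the definition \eqref{eq:defXH}, and confirming the resulting constant pairing is zero rather than merely constant — to be the main obstacle; everything else is the ergodicity/cohomological bookkeeping above. Since $L$ has $2n-1$... wait, $L$ is $2n\times n$ here because $d=n-1$ gives $(d-1)+1+1=n$ columns, so the Gram matrix is $n\times n$, and showing it is $0$ is precisely the statement that the column span is isotropic of dimension $n$, i.e.\ Lagrangian; this holds for each $\varphi$, so $L$ is a fiberwise Lagrangian subframe.
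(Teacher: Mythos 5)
Your overall architecture matches the paper's: split $L^\top\Omega L$ into blocks, use fiberwise isotropy (Lemma~\ref{lemma:iso}) for the ${\rm D}_\theta K$--${\rm D}_\theta K$ block, antisymmetry for the diagonal scalar blocks, and the common invariance under ${\rm D}_z\phi_T$ plus symplecticity to relate $C(\theta,\varphi)=L^\top\Omega(K)L$ to $\Lambda^{\top}C(\bar\theta,\bar\varphi)\Lambda$. Your treatment of every block involving the $W$-column is correct and in fact slightly cleaner than the paper's: the sup-norm identity $\|C_{pq}\|_\infty=|\lambda|^{\epsilon_p+\epsilon_q}\|C_{pq}\|_\infty$ kills these entries in one stroke for $|\lambda|\neq1$, whereas the paper iterates the relation $k$ times and lets $k\to\infty$ (treating $\lambda<1$ and $\lambda>1$ separately via $\phi_{-T}$). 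Likewise, constancy of the remaining entries by ergodicity of $(\theta,\varphi)\mapsto(\theta+\omega,\varphi+\alpha)$ is exactly the paper's step.

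However, there is a genuine gap, and it is precisely the step you flag as ``the main obstacle'': showing that the constant block ${\rm D}_\theta K^\top\Omega\,\mathcal X_H$ is actually zero. This is the crux of the lemma (the ``magic cancellation''), and your sketch does not deliver it. The paper computes the average directly: writing $\mathcal X_H=X_H\circ K-{\rm D}_\varphi K\,\hat\alpha$, the first term gives $\langle{\rm D}_\theta K^\top {\rm D}_z H(K,\varphi)^\top\rangle=\langle{\rm D}_\theta\bigl(H\circ K\bigr)^\top\rangle=0$ via $\Omega X_H={\rm D}_zH^\top$, and the second term vanishes only after invoking exactness $\Omega={\rm D}a^\top-{\rm D}a$ and an integration by parts in \emph{both} $\theta$ and $\varphi$ of $a\circ K$ against the mixed partials of $K$. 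Your alternative route --- unwinding $\mathcal X_H$ through the vector-field invariance of $\hat K$ --- expresses $\mathcal X_H$ as $\tfrac1T\bigl({\rm D}_\theta K\,\omega+\partial_{\theta_d}\hat K\bigr)$, so the pairing becomes ${\rm D}_\theta K^\top\Omega\,\partial_{\theta_d}\hat K$ up to the already-zero isotropy block; but this cross term is \emph{not} an entry of the isotropy matrix established in Lemma~\ref{lemma:iso} (that lemma concerns only the $(d-1)$-dimensional generating tori $\mathcal K_\varphi$, and the time-$T$ rotation $(\theta,\theta_d,\varphi)\mapsto(\theta+\omega,\theta_d+1,\varphi+\alpha)$ is not ergodic in $\theta_d$), so its vanishing would require a separate argument --- essentially the isotropy of the full $d$-dimensional tori $\hat{\mathcal K}_\varphi$ or the very average computation the paper performs. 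As written, the claim that the block reduces to ``entries of the isotropy matrix plus an average of an exact $\theta$-derivative'' is unjustified, so the proof is incomplete at its most essential point.
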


\begin{proof}
If $L$ is a fiberwise Lagrangian subframe, it needs to satisfy
\begin{equation*}
\label{eq:L_lag}
L(\theta,\varphi)^{\top}\Omega\big(K(\theta,\varphi)\big)L(\theta,\varphi)=0.
\end{equation*}
Since $\Omega$ is skew-symmetric, it suffices to prove the following:
\begin{enumerate}[i.]
    \item \label{it:1}${\rm D}_\theta K(\theta,\varphi)^\top\Omega\big(K(\theta,\varphi)\big){\rm D}_\theta K(\theta,\varphi)=0$,
    \item \label{it:2}${\rm D}_\theta K(\theta,\varphi)^\top\Omega\big(K(\theta,\varphi)\big)\mathcal{X}_H(\theta,\varphi)=0$,
    \item \label{it:3}${\rm D}_\theta K(\theta,\varphi)^\top\Omega\big(K(\theta,\varphi)\big)W(\theta,\varphi)=0$,
    \item \label{it:4}$\mathcal{X}_H(\theta,\varphi)^\top \Omega\big(K(\theta,\varphi)\big)\mathcal{X}_H(\theta,\varphi)=0$,
    \item \label{it:5}$\mathcal{X}_H(\theta,\varphi)^\top \Omega\big(K(\theta,\varphi)\big) W(\theta,\varphi)=0$,
    \item \label{it:6}$W(\theta,\varphi)^\top\Omega\big(K(\theta,\varphi)\big)W(\theta,\varphi)=0$.
\end{enumerate}
From Lemma \ref{lemma:iso}, we have that $\mathcal K$ is fiberwise isotropic. Hence, \eqref{it:1} is immediately satisfied. Additionally, \eqref{it:4} and \eqref{it:6} are the symplectic product of vectors with themselves; hence trivial. Note that \eqref{it:6} is also true for stable and unstable bundles of higher rank. The proof uses a similar contraction argument to the one we use here to prove \eqref{it:3} and \eqref{it:5}, which is the following. Let $\bar \theta^k := \theta + k\omega$ and $\bar \varphi^k := \varphi + k\alpha$ with $k\in\N$. From the symplecticity of $\phi_T$ and the invariance of $\mathcal{K}$, we obtain 
\begin{equation}
\label{eq:Omegaflow}
      \Omega\big(K(\theta,\varphi)\big) = {\rm D}_z\phi_T\big(K(\theta,\varphi),\varphi\big)^\top \Omega\big(K(\bar\theta,\bar\varphi )\big) {\rm D}_z\phi_T\big(K(\theta,\varphi),\varphi\big). 
\end{equation}
If we apply \eqref{eq:Omegaflow} recursively $k$ times in \eqref{it:3} and \eqref{it:5}, and use the invariance of $D_\theta K, \mathcal{X}_H,$ and $\mathcal{W}$, we obtain the identities
\begin{align*}
{\rm D}_\theta K(\theta,\varphi)^\top\Omega\big(K(\theta,\varphi)\big)&W(\theta,\varphi) = \\
&\lambda^k{\rm D}_\theta K(\bar\theta^k,\bar\varphi^k)^\top\Omega\big(K(\bar\theta^k,\bar\varphi^k)\big)W(\bar\theta^k,\bar\varphi^k), \\
 \mathcal{X}_H(\theta,\varphi)^\top\Omega\big(K(\theta,\varphi)\big)&W(\theta,\varphi)=\\
 &\lambda^k\mathcal{X}_H(\bar\theta^k,\bar\varphi^k)^\top\Omega\big(K(\bar\theta^k,\bar\varphi^k)\big)W(\bar\theta^k,\bar\varphi^k).   
\end{align*}
Let us now assume the dynamics in $\mathcal{W}$ is contracting, i.e., $\lambda<1$. Then by taking $k\rightarrow\infty$, we conclude \eqref{it:3} and \eqref{it:5} are zero. For the case where the dynamics in $\mathcal{W}$ is expanding, we can consider the invariance of $\mathcal{K}$ under $\phi_{-T}$ and proceed analogously.

For the proof of \eqref{it:2}, we substitute \eqref{eq:Omegaflow} in the left-hand side and use that ${\rm D}_\theta K$ and $\mathcal{X}_H$ are invariant under ${\rm D}_z \phi_T$. Then, we have 
\begin{equation*}
    {\rm D}_\theta K(\theta,\varphi)^\top\Omega\big(K(\theta,\varphi)\big)\mathcal{X}_H(\theta,\varphi) = 
    {\rm D}_\theta K(\bar\theta,\bar\varphi)^\top\Omega\big(K(\bar\theta,\bar\varphi)\big)\mathcal{X}_H(\bar\theta,\bar\varphi).
\end{equation*}
From the ergodicity of $(\omega,\alpha)$, the left-hand side of \eqref{it:2} must be constant. Recall that $\langle\cdot\rangle$ denotes the average with respect to $\theta$ and $\varphi$.  Let us then take the average of the left-hand side of \eqref{it:2} after expanding $\mathcal{X}_H$. That is,
\[
\langle{\rm D}_\theta K(\theta,\varphi)^\top \Omega\big(K(\theta,\varphi)\big)\Big(X_H\big(K(\theta,\varphi),\varphi\big) - {\rm D}_\varphi K(\theta,\varphi) \hat\alpha\Big)\rangle.
\]
From the first term, using that $\Omega(z)X_H(z,\varphi)= {\rm D}_z H(z,\varphi)^\top$, we obtain
\[
\langle{\rm D}_\theta K(\theta,\varphi)^\top {\rm D}_zH\big(K(\theta,\varphi),\varphi\big)^\top\rangle = \langle {\rm D}_\theta\Big( H\big(K(\theta,\varphi),\varphi\big)\Big)^\top\rangle=0
\]
since we are taking averages of derivatives with respect to $\theta$. 
For the second term, we use that $\Omega(z) = {\rm D}a(z)^\top - {\rm D}a(z)$ and obtain
\begin{equation}
\label{eq:proof2}
\langle {\rm D}_\theta K(\theta,\varphi)^\top {\rm D}_\varphi \Big( a\big(K(\theta,\varphi)\big) \Big ) -{\rm D}_\theta \Big(a\big(K(\theta,\varphi)\big)\Big)^\top {\rm D}_\varphi K(\theta,\varphi)\rangle \hat\alpha.
\end{equation}
Then, each term $ij$ in \eqref{eq:proof2} is obtained from
\begin{equation*}
\begin{split}
&\langle \sum_{k}\partial_{\theta^i}K^k(\theta,\varphi)\partial_{\varphi^j}\Big(a_k\big(K(\theta,\varphi)\big)\Big)- \partial_{\theta^i}\Big(a_k\big(K(\theta,\varphi)\big)\Big)\partial_{\varphi^j}K^k(\theta,\varphi)\rangle\\
&=\langle \sum_{k} \partial_{\varphi^j}\Big(a_k\big(K(\theta,\varphi)\big) \partial_{\theta^i} K^k(\theta,\varphi) \Big) - a_k\big(K(\theta,\varphi)\big)\partial^2_{\theta^i\varphi^j}K^k(\theta,\varphi)\\ 
&\phantom{=} -\partial_{\theta^i}\Big(a_k\big(K(\theta,\varphi)\big) \partial_{\varphi^j} K^k(\theta,\varphi) \Big) + a_k\big(K(\theta,\varphi)\big)\partial^2_{\theta^i\varphi^j}K^k(\theta,\varphi)\rangle\\
 & = 0
 \end{split}
\end{equation*}
since we are taking averages of derivatives with respect to $\theta$ and $\varphi$. Therefore \eqref{it:2} also holds.
\end{proof}

\section{Quadratically small averages}
\label{ap:quadratiically small averages}
For the Newton step described in Section \ref{sec:nwttr} to be consistent, we need the averages of $\eta^3$ given by
\begin{equation*}
	\eta^3(\theta,\varphi) = 
	\begin{pmatrix} 
	-{\rm D}_\theta K(\bar\theta,\bar\varphi)^\top \Omega\big(K(\bar\theta,\bar\varphi)\big) E^K(\theta,\varphi) 
	\\
	-\mathcal{X}_H(\bar\theta,\bar\varphi)^\top \Omega\big(K(\bar\theta,\bar\varphi)\big) E^K(\theta,\varphi)
	\end{pmatrix} =: 
	\begin{pmatrix}
	\eta^{31}(\theta,\varphi)
	\\
	\eta^{32}(\theta,\varphi)
	\end{pmatrix}	 
\end{equation*}
to be quadratically small with respect to the error in the torus invariance $E^K$. Note that for some suitable norm, the derivatives of $E^K$ can be controlled by $\|E^K\|$ using Cauchy estimates. The following lemma provides the explicit formulas for $\langle\eta^3\rangle$.
\begin{lemma}
The averages of $\eta^{31}$ and $\eta^{32}$ are 
\begin{align*}
\langle\eta^{31}\rangle &= -\langle{\rm D}_\theta E^K(\theta,\varphi)^\top \Delta^1 a(\theta,\varphi)+ {\rm D}_\theta K (\bar\theta,\bar\varphi)^\top\Delta^2a(\theta,\varphi)\rangle\\
\langle\eta^{32}\rangle &= \langle \Delta^1a(\theta,\varphi)^\top {\rm D}_\varphi E^K(\theta,\varphi)\hat\alpha\rangle + \langle\Delta^2a(\theta,\varphi)^\top{\rm D}_\varphi K (\bar\theta,\bar\varphi)\hat\alpha\rangle \\
&\phantom{=} - \langle \Delta^2H(\theta,\varphi)\rangle,
\end{align*}
where the Taylor remainders $\Delta^i$ of order $i$ in $E^K$ are given by 
\begin{align*}
	\Delta^1 a(\theta,\varphi):&= 
	a\big(\phi_T(K(\theta,\varphi),\varphi)\big)
	 - 	a\big(K(\bar\theta,\bar\varphi)\big)
	\\
	&=
	\int_{0}^1 {\rm D} a\big(K(\bar\theta,
	\bar\varphi)+s E^K(\theta,\varphi)\big)
	 E^K(\theta,\varphi)\ ds,
\end{align*}
\begin{align*}
	&\Delta^2  a(\theta,\varphi):= 
	a\big(\phi_T(K(\theta,\varphi),\varphi)\big) - a\big(K(\bar\theta,\bar\varphi)\big) -
	{\rm D} a\big(K(\bar\theta,\bar\varphi)\big) E^K(\theta,\varphi)   \\
	&=\int_{0}^1 (1-s){\rm D}^2 a\big(K(\bar\theta,\bar\varphi)+s E^K(\theta,\varphi)\big) [E^K(\theta,\varphi),E^K(\theta,\varphi)]\ ds, \\
	&\Delta^2 H(\theta,\varphi):=H\big(\phi_T(K(\theta,\varphi),\varphi),\bar\varphi\big) - H\big(K(\bar\theta,\bar\varphi),\bar\varphi\big) \\
	&\phantom{\Delta^2 H(\theta,\varphi)=} - {\rm D}_z H\big(K(\bar\theta,\bar\varphi),\bar\varphi\big)E^K(\theta,\varphi)   \\
    &=\int_0^1(1-s){\rm D}^2_z H\big(K(\bar\theta,\bar\varphi) + sE(\theta,\varphi),\bar\varphi\big)[E^K(\theta,\varphi),E^K(\theta,\varphi)]\ ds.
\end{align*}
\end{lemma}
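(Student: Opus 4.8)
The plan is to prove the two formulas separately, in each case rewriting the integrand of $\langle\eta^{31}\rangle$ (resp.\ $\langle\eta^{32}\rangle$) as a sum of three kinds of terms: genuine second‑order Taylor remainders in $E^K$ — exactly what survives in the statement —, total $\theta$‑ or $\varphi$‑derivatives of single‑valued functions on $\T^{d-1}\times\T^\ell$, and quantities evaluated at $(\bar\theta,\bar\varphi)$ that, after the measure‑preserving change of variables $(\theta,\varphi)\mapsto(\theta+\omega,\varphi+\alpha)$, equal something already present. The last two kinds drop out under $\langle\cdot\rangle$, leaving only the remainders. I abbreviate $z=K(\theta,\varphi)$, $\bar z=K(\bar\theta,\bar\varphi)$, $\Phi=\phi_T(K(\theta,\varphi),\varphi)$ (so $E^K=\Phi-\bar z$) and $\bar a=a(K(\bar\theta,\bar\varphi))$. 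The structural inputs are: $\Omega={\rm D}a^\top-{\rm D}a$ from \eqref{eq:exactness}; the primitive‑function identities \eqref{eq:ap_primitiveZ}--\eqref{eq:ap_primitivevarphi} of Lemma~\ref{lemma:primitive}; the energy balance $\frac{d}{ds}H(\phi_s(z,\varphi),R_s(\varphi))={\rm D}_\varphi H(\phi_s(z,\varphi),R_s(\varphi))\hat\alpha$, valid because ${\rm D}_z H\cdot X_H=0$, which at $s=T$ uses $\hat\alpha T=\alpha$, i.e.\ $R_T(\varphi)=\bar\varphi$; and translation‑invariance of the normalized average $\langle\cdot\rangle$ (no Diophantine or ergodicity assumption enters here).

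For $\langle\eta^{31}\rangle$: starting from $\eta^{31}=-{\rm D}_\theta K(\bar\theta,\bar\varphi)^\top\Omega(\bar z)E^K$, substitute $\Omega(\bar z)={\rm D}a(\bar z)^\top-{\rm D}a(\bar z)$ and use ${\rm D}a(\bar z){\rm D}_\theta K(\bar\theta,\bar\varphi)={\rm D}_\theta\bar a$ together with ${\rm D}a(\bar z)E^K=\Delta^1 a-\Delta^2 a$, which yields $\eta^{31}=-({\rm D}_\theta\bar a)^\top E^K+{\rm D}_\theta K(\bar\theta,\bar\varphi)^\top(\Delta^1 a-\Delta^2 a)$. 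Add the two correction terms from the claim; the $\Delta^2 a$ pieces cancel and ${\rm D}_\theta K(\bar\theta,\bar\varphi)^\top\Delta^1 a+({\rm D}_\theta E^K)^\top\Delta^1 a=({\rm D}_\theta\Phi)^\top\Delta^1 a={\rm D}_\theta K(\theta,\varphi)^\top{\rm D}_z\phi_T(z,\varphi)^\top(a(\Phi)-\bar a)$, using ${\rm D}_\theta\Phi={\rm D}_z\phi_T(z,\varphi){\rm D}_\theta K(\theta,\varphi)$. The transpose of \eqref{eq:ap_primitiveZ}, ${\rm D}_z\phi_T(z,\varphi)^\top a(\phi_T(z,\varphi))={\rm D}_z p_T(z,\varphi)^\top+a(z)$, turns the $a(\Phi)$ part into $({\rm D}_\theta[p_T(K(\theta,\varphi),\varphi)])^\top+{\rm D}_\theta K(\theta,\varphi)^\top a(z)$, while $({\rm D}_\theta\Phi)^\top\bar a={\rm D}_\theta K(\bar\theta,\bar\varphi)^\top\bar a+({\rm D}_\theta E^K)^\top\bar a$. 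Averaging kills the $p_T$‑gradient, matches $\langle{\rm D}_\theta K(\theta,\varphi)^\top a(z)\rangle$ with $\langle{\rm D}_\theta K(\bar\theta,\bar\varphi)^\top\bar a\rangle$ by the torus translation, and leaves $-\langle({\rm D}_\theta\bar a)^\top E^K+({\rm D}_\theta E^K)^\top\bar a\rangle=-\langle{\rm D}_\theta(\bar a^\top E^K)\rangle=0$. This is the asserted identity for $\eta^{31}$.

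For $\langle\eta^{32}\rangle$: write $\mathcal{X}_H(\bar\theta,\bar\varphi)=X_H(\bar z,\bar\varphi)-{\rm D}_\varphi K(\bar\theta,\bar\varphi)\hat\alpha$. The $X_H$ part contributes $-X_H(\bar z,\bar\varphi)^\top\Omega(\bar z)E^K={\rm D}_z H(\bar z,\bar\varphi)E^K=H(\Phi,\bar\varphi)-H(\bar z,\bar\varphi)-\Delta^2 H$, using $X_H^\top\Omega=-{\rm D}_z H$ and the definition of $\Delta^2 H$. The ${\rm D}_\varphi K\hat\alpha$ part is handled exactly as $\eta^{31}$ but with $\varphi$‑derivatives: after adding the correction terms $-\Delta^2 a^\top{\rm D}_\varphi K(\bar\theta,\bar\varphi)\hat\alpha-\Delta^1 a^\top{\rm D}_\varphi E^K\hat\alpha$ and discarding the total derivative ${\rm D}_\varphi(\bar a^\top E^K)$, the task reduces to $\langle H(\Phi,\bar\varphi)-H(\bar z,\bar\varphi)-a(\Phi)^\top{\rm D}_\varphi\Phi\,\hat\alpha+\bar a^\top{\rm D}_\varphi K(\bar\theta,\bar\varphi)\hat\alpha\rangle=0$. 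Here $\langle H(\bar z,\bar\varphi)\rangle=\langle H(z,\varphi)\rangle$ and $\langle\bar a^\top{\rm D}_\varphi K(\bar\theta,\bar\varphi)\hat\alpha\rangle=\langle a(z)^\top{\rm D}_\varphi K(\theta,\varphi)\hat\alpha\rangle$ by translation; $\langle{\rm D}_\varphi[p_T(K(\theta,\varphi),\varphi)]\rangle=0$ combined with \eqref{eq:ap_primitiveZ}, \eqref{eq:ap_primitivevarphi} and ${\rm D}_\varphi\Phi={\rm D}_z\phi_T(z,\varphi){\rm D}_\varphi K(\theta,\varphi)+{\rm D}_\varphi\phi_T(z,\varphi)$ gives $\langle a(\Phi)^\top{\rm D}_\varphi\Phi\,\hat\alpha\rangle=\langle a(z)^\top{\rm D}_\varphi K(\theta,\varphi)\hat\alpha\rangle+\langle\int_0^T{\rm D}_\varphi H(\phi_s(z,\varphi),R_s(\varphi))\hat\alpha\,ds\rangle$; and integrating the energy balance over $[0,T]$ gives $H(\Phi,\bar\varphi)-H(z,\varphi)=\int_0^T{\rm D}_\varphi H(\phi_s(z,\varphi),R_s(\varphi))\hat\alpha\,ds$. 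These three facts make the bracket collapse to $0$, which is the asserted identity for $\eta^{32}$.

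I expect the only real difficulty to be bookkeeping rather than ideas: tracking transposes, the two ways $\phi_T$ depends on $\varphi$ (through the state and through the explicit angle), and whether a derivative is evaluated at $(\theta,\varphi)$ or at $(\bar\theta,\bar\varphi)$, so that each term meant to be a total derivative or a translate of another genuinely is one. Once these substitutions are made cleanly, nothing beyond the primitive‑function identities, the energy balance along orbits, the vanishing of $\langle{\rm D}_\theta(\cdot)\rangle$ and $\langle{\rm D}_\varphi(\cdot)\rangle$, and translation‑invariance of $\langle\cdot\rangle$ is required; the resulting formulas are manifestly quadratic in $E^K$, as needed.
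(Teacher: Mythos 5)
Your proposal is correct and follows essentially the same route as the paper's proof: both rest on the exactness identity $\Omega={\rm D}a^\top-{\rm D}a$, the primitive-function identities \eqref{eq:ap_primitiveZ}--\eqref{eq:ap_primitivevarphi}, the energy balance along orbits (using ${\rm D}_zH\,X_H=0$), the vanishing of averaged $\theta$- and $\varphi$-derivatives, and translation invariance of $\langle\cdot\rangle$, with the same splitting of $\eta^{32}$ into its $X_H$ and ${\rm D}_\varphi K\,\hat\alpha$ parts. The only difference is organizational (you verify that the difference with the claimed formulas averages to zero and group terms via ${\rm D}_\theta\Phi$, ${\rm D}_\varphi\Phi$ rather than ${\rm D}_\theta E^K$, ${\rm D}_\varphi E^K$), which is immaterial.
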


\begin{proof}
Let us start with $\langle\eta^{31}\rangle$ by using the exactness of the symplectic form as expressed in \eqref{eq:exactness} to obtain 
\[
\begin{split}
 \langle \eta^{31} \rangle
= &
 - \langle {\rm D}_\theta K(\bar\theta,\bar\varphi)^\top {\rm D} a\big(K(\bar\theta,\bar\varphi)\big)^\top E^K(\theta,\varphi)\rangle \\
 & 
 + \langle  {\rm D}_\theta K(\bar\theta,\bar\varphi)^\top {\rm D} a\big(K(\bar\theta,\bar\varphi)\big) E^K(\theta,\varphi) \rangle.
\end{split}
\]
We use that
\[
\begin{split}	
	0 = & \langle  {\rm D}_\theta\Big( a\big(K(\bar\theta,\bar\varphi)\big)^\top E^K(\theta,\varphi) \Big)  \rangle \\
	   = & \langle E^K(\theta,\varphi)^\top  {\rm D}_\theta \big( a\big(K(\bar\theta,\bar\varphi)\big)\big) + a\big(K(\bar\theta,\bar\varphi)\big)^\top {\rm D}_\theta E^K(\theta,\varphi)   \rangle,
\end{split}	
\]
so we have
\begin{align*}
\langle \eta^{31}\rangle &= \\
& \langle  {\rm D}_\theta E^K(\theta,\varphi)^\top a\big(K(\bar\theta,\bar\varphi)\big) 
+{\rm D}_\theta K(\bar\theta,\bar\varphi)^\top \big(\Delta^1 a(\theta,\varphi)-\Delta^2 a(\theta,\varphi)\big)\rangle.
\end{align*}
We will now use 
\begin{gather*}
{\rm D}_\theta E^K(\theta,\varphi) = {\rm D}_z \phi_T\big(K(\theta,\varphi),\varphi\big){\rm D}_\theta K(\theta,\varphi) - {\rm D}_\theta K(\bar\theta,\bar\varphi),\\
\langle {\rm D}_\theta K( \bar\theta,\bar\varphi)^\top a\big(K(\bar\theta,\bar\varphi)\big)\rangle
= \langle {\rm D}_\theta K(\theta,\varphi)^\top a\big(K(\theta,\varphi)\big)\rangle,
\end{gather*}
Eq. \eqref{eq:ap_primitiveZ}, and 
\[
\begin{split}
{\rm D}_\theta\big( &p_T(K(\theta,\varphi),\varphi)\big) = \\
&\Big(a\big(\phi_T(K(\theta,\varphi),\varphi)\big)^\top {\rm D}_z\phi_T(K(\theta,\varphi),\varphi) - a\big(K(\theta,\varphi)\big)^\top \Big){\rm D}_\theta K(\theta,\varphi)
\end{split}
\]
to rewrite $\langle\eta^{31}\rangle$ as
\[
\begin{split}
 \langle \eta^{31}\rangle 
= &
\phantom{+}
\langle {\rm D}_\theta K(\theta,\varphi)^\top 
\Big({\rm D}_z\phi_T\big(K(\theta,\varphi),\varphi\big)^\top a\big(\phi_T(K(\theta,\varphi),\varphi)\big) \\ 
\phantom{+} & - a\big(K(\theta,\varphi)\big)\Big)\rangle  - \langle{\rm D}_\theta E^K(\theta,\varphi)^\top \Delta^1 a(\theta,\varphi)\rangle \\
& - \langle{\rm D}_\theta K(\bar\theta,\bar\varphi)^\top \Delta^2 a(\theta,\varphi)
\rangle 
\\
=&  \phantom{+}
\langle  {\rm D}_\theta\big(p_T(K(\theta,\varphi),\varphi)\big)^\top  \rangle 
\\
& - \langle {\rm D}_\theta E^K(\theta,\varphi)^\top \Delta^1 a(\theta,\varphi) + {\rm D}_\theta K(\bar\theta,\bar\varphi)^\top \Delta^2 a(\theta,\varphi)\rangle
\\
=& 
- \langle {\rm D}_\theta E^K(\theta,\varphi)^\top \Delta^1 a(\theta,\varphi) + {\rm D}_\theta K(\bar\theta,\bar\varphi)^\top \Delta^2 a(\theta,\varphi)\rangle,
\end{split}
\]

For $\eta^{32}$, we expand $\mathcal{X}_H$ to obtain
\[
\begin{split}
\langle\eta^{32}\rangle = &-\langle X_H\big(K(\bar\theta,\bar\varphi),\bar\varphi\big)^\top \Omega\big(K(\bar\theta,\bar\varphi\big)E^K(\theta,\varphi)\rangle \\
&+ \langle\hat\alpha^\top{\rm D}_\varphi K(\bar\theta,\bar\varphi)^\top\Omega\big(K(\bar\theta,\bar\varphi)\big)E^K(\theta,\varphi)\rangle
\end{split}
\]
and define 
\[
\begin{split}
\eta^{32}_1(\theta,\varphi)&:=-X_H\big(K(\bar\theta,\bar\varphi,\bar\varphi\big)^\top \Omega\big(K(\bar\theta,\bar\varphi\big)E^K(\theta,\varphi), \\
\eta^{32}_2(\theta,\varphi)&:= \hat\alpha^\top{\rm D}_\varphi K(\bar\theta,\bar\varphi)^\top\Omega\big(K(\bar\theta,\bar\varphi)\big)E^K(\theta,\varphi)
\end{split}
\]
in order to inspect each term of $\eta^{32}$ separately. We first use that $ {\rm D}_zH(z,\varphi)=-X_H(z,\varphi)^\top\Omega(z)$ to express $\langle\eta^{32}_1\rangle$ as
\[
\langle \eta^{32}_1\rangle = \langle{\rm D}_z H\big(K(\bar\theta,\bar\varphi),\bar\varphi\big) E^K(\theta,\varphi)\rangle.
\]
Hence,
\[
\langle \eta^{32}_1\rangle = \langle H\big(\phi_T(K(\theta,\varphi),\varphi),\bar\varphi\big) - H\big(K(\theta,\varphi),\varphi\big)\rangle-\langle\Delta^2 H(\theta,\varphi) \rangle
\]
where we used that
\[
\langle H\big(K(\bar\theta,\bar\varphi),\bar\varphi\big) \rangle=\langle H\big(K(\theta,\varphi),\varphi\big)\rangle.
\]
Let $R_t(\varphi)=\varphi + \hat\alpha t $. We use the fundamental theorem of calculus so 
\[
\begin{split}
&H\Big(\phi_T\big(K(\theta,\varphi),\varphi\big),\bar\varphi\Big) - H\big(K(\theta,\varphi),\varphi\big)\\
&= \int^T_0 \frac{d}{dt}\bigg(H\Big(\phi_t\big(K(\theta,\varphi),\varphi\big),R_t(\varphi)\Big)\bigg) dt \\
& = \int_0^T \bigg({\rm D}_zH\Big(\phi_t\big(K(\theta,\varphi),\varphi\big),R_t(\varphi)\Big)X_H\Big(\phi_t\big(K(\theta,\varphi),\varphi\big),R_t(\varphi)\Big) \bigg) dt\\
 & \phantom{=}+ \int_0^T \bigg({\rm D}_\varphi H\Big(\phi_t\big(K(\theta,\varphi),\varphi\big),R_t(\varphi)\Big)\hat\alpha\bigg) dt,
\end{split}
\]
where the first integral varnishes because
\[
{\rm D}_z H(z,\varphi)X_H(z,\varphi) = -X_H(z,\varphi)^\top\Omega(z)X_H(z,\varphi) = 0.
\]
Consequently,
\[
\langle \eta^{32}_1\rangle = \langle  \int_0^T \bigg({\rm D}_\varphi H\Big(\phi_t\big(K(\theta,\varphi),\varphi\big),R_t(\varphi)\Big) \hat\alpha\bigg)\ dt  \rangle -\langle \Delta^2H(\theta,\varphi)\rangle.
\]
We now use \eqref{eq:ap_primitivevarphi} to express $\langle\eta^{32}_1\rangle$ as
\begin{align*}
  \langle\eta^{32}_1\rangle &= \langle a\Big(\phi_T\big(K(\theta,\varphi),\varphi\big)\Big)^\top {\rm D}_\varphi \phi_T\big(K(\theta,\varphi),\varphi\big)\hat\alpha \\ 
  &\phantom{=}-  {\rm D}_\varphi p_T\big(K(\theta,\varphi),\varphi\big)\hat\alpha \rangle -\langle \Delta^2H(\theta,\varphi)\rangle  \\
  &=\langle a\Big(\phi_T\big(K(\theta,\varphi),\varphi\big)\Big)^\top {\rm D}_\varphi \phi_T\big(K(\theta,\varphi),\varphi\big)\hat\alpha\rangle \\
  &\phantom{=} + \langle{\rm D}_z p_T\big(K(\theta,\varphi),\varphi\big) {\rm D}_\varphi K(\theta,\varphi)\hat\alpha\rangle  - \langle \Delta^2H(\theta,\varphi)\rangle,
\end{align*}
where we used 
\begin{align*}
    0 &= \langle{\rm D}_\varphi \Big(p_T\big(K(\theta,\varphi),\varphi\big) \Big) \rangle \\
    &=\langle{\rm D}_z p_T\big(K(\theta,\varphi),\varphi\big){\rm D}_\varphi K(\theta,\varphi) + {\rm D}_\varphi p_T\big(K(\theta,\varphi),\varphi\big) \rangle.
\end{align*}
We then use that
\[
\begin{split}
{\rm D}_\varphi E^K(\theta,\varphi) &=  {\rm D}_z \phi_T\big(K(\theta,\varphi),\varphi\big){\rm D}_\varphi K(\theta,\varphi) +  {\rm D}_\varphi\phi_T\big(K(\theta,\varphi),\varphi\big)\\
&\phantom{=}- {\rm D}_\varphi K(\bar\theta,\bar\varphi),
\end{split}
\]
in order to rewrite $\langle\eta^{32}_1\rangle$ as
\begin{align*}
 \langle\eta^{32}_1\rangle &=  \langle a\Big(\phi_T\big(K(\theta,\varphi),\varphi\big)\Big)^\top {\rm D}_\varphi E^K(\theta,\varphi)\hat\alpha\rangle \\
 &\phantom{=} -\langle a\Big(\phi_T\big(K(\theta,\varphi),\varphi\big)\Big)^\top {\rm D}_z \phi_T\big(K(\theta,\varphi,\varphi)\big){\rm D}_\varphi K(\theta,\varphi)\hat\alpha\rangle\\
 &\phantom{=} +\langle a\Big(\phi_T\big(K(\theta,\varphi),\varphi\big)\Big)^\top{\rm D}_\varphi K(\bar\theta,\bar\varphi)\hat\alpha\rangle\\
 &\phantom{=} + \langle {\rm D}_z p_T\big(K(\theta,\varphi),\varphi\big) {\rm D}_\varphi K(\theta,\varphi)\hat\alpha\rangle  - \langle \Delta^2H(\theta,\varphi)\rangle \\
 &= \langle a\Big(\phi_T\big(K(\theta,\varphi),\varphi\big)\Big)^\top {\rm D}_\varphi E^K(\theta,\varphi)\hat\alpha - a\big(K(\bar\theta,\bar\varphi)\big)^\top {\rm D}_\varphi K(\bar\theta,\bar\varphi) \hat\alpha\rangle \\
 &\phantom{=} +\langle a\Big(\phi_T\big(K(\theta,\varphi),\varphi\big)\Big)^\top{\rm D}_\varphi K(\bar\theta,\bar\varphi)\hat\alpha\rangle - \langle \Delta^2H(\theta,\varphi)\rangle,
 \end{align*}
where we used Eq. \eqref{eq:ap_primitiveZ} in the last equality and that
\[
\langle a\big(K(\theta,\varphi)\big)^\top {\rm D}_\varphi K(\theta,\varphi) \hat\alpha\rangle = \langle a\big(K(\bar\theta,\bar\varphi)\big)^\top {\rm D}_\varphi K(\bar\theta,\bar\varphi) \hat\alpha\rangle.
\]
We leave the term $\eta^{32}_1$ as it is. For the term $\eta^{32}_2$, we use \eqref{eq:exactness} to obtain
\[
\begin{split}
 \langle \eta^{32}_2 \rangle
= &\phantom{+}\langle \hat\alpha^\top{\rm D}_\varphi K(\bar\theta,\bar\varphi)^\top {\rm D} a\big(K(\bar\theta,\bar\varphi)\big)^\top E^K(\theta,\varphi)\rangle \\
 &- \langle \hat\alpha^\top {\rm D}_\varphi K(\bar\theta,\bar\varphi)^\top {\rm D} a\big(K(\bar\theta,\bar\varphi)\big) E^K(\theta,\varphi) \rangle.
\end{split}
\]
Then, we use that
\[
\begin{split}	
	0 = & \langle  {\rm D}_\varphi\Big( a\big(K(\bar\theta,\bar\varphi)\big)^\top E^K(\theta,\varphi) \Big)  \rangle \\
	   = & \langle E^K(\theta,\varphi)^\top  {\rm D}_\varphi \Big(a\big(K(\bar\theta,\bar\varphi)\big)\Big) + 
    a\big(K(\bar\theta,\bar\varphi)\big)^\top {\rm D}_\varphi E^K(\theta,\varphi)   \rangle
\end{split}	
\]
and the definitions for $\Delta^1 a$ and $\Delta^2 a$ to obtain
\[
\begin{split}
\langle \eta^{32}_2\rangle =
&-\langle a\big(K(\bar\theta,\bar\varphi)\big)^\top {\rm D}_\varphi E^K(\theta,\varphi)\hat\alpha \rangle \\ 
& - \langle\Big(\Delta^1 a(\theta,\varphi)^\top-\Delta^2 a(\theta,\varphi)^\top\Big){\rm D}_\varphi K(\bar\theta,\bar\varphi)\hat\alpha\rangle,
\end{split}
\]
where we used that $\eta^{32}_2 = \big(\eta^{32}_2 \big)^\top$ since $\eta^{32}_2$ is a scalar. 
Lastly, we obtain $\langle\eta^{32} \rangle$ as 
\begin{align*}
    \langle \eta^{32}\rangle & = \langle \eta^{32}_1\rangle + \langle \eta^{32}_2\rangle\\
    &= \langle a\Big(\phi_T\big(K(\theta,\varphi),\varphi\big)\Big)^\top {\rm D}_\varphi E^K(\theta,\varphi)\hat\alpha -  a\big(K(\bar\theta,\bar\varphi)\big)^\top {\rm D}_\varphi K(\bar\theta,\bar\varphi) \hat\alpha\rangle \\
 &\phantom{=} + \langle a\Big(\phi_T\big(K(\theta,\varphi),\varphi\big)\Big)^\top{\rm D}_\varphi K(\bar\theta,\bar\varphi)\hat\alpha\rangle \\
  &\phantom{=}-\langle a\big(K(\bar\theta,\bar\varphi)\big)^\top {\rm D}_\varphi E^K(\theta,\varphi)\hat\alpha - \Delta^1 a(\theta,\varphi)^\top  {\rm D}_\varphi K(\bar\theta,\bar\varphi)\hat\alpha\rangle\\ 
 &\phantom{=}  + \langle \Delta^2 a(\theta,\varphi)^\top {\rm D}_\varphi K(\bar\theta,\bar\varphi)\hat\alpha\rangle - \langle \Delta^2H(\theta,\varphi)\rangle\\
 &= \langle\Delta^1 a(\theta,\varphi)^\top  {\rm D}_\varphi E^K(\theta,\varphi)\hat\alpha\rangle + \langle \Delta^2 a(\theta,\varphi)^\top {\rm D}_\varphi K(\bar\theta,\bar\varphi)\hat\alpha\rangle\\
 &\phantom{=} - \langle \Delta^2H(\theta,\varphi)\rangle.
\end{align*}
\end{proof}

\section{Fiberwise symplectic deformations and moment maps}\label{ap:deformations}
In this section we introduce the notion of fiberwise symplectic deformations and establish their main properties. For a general exposition on symplectic deformations see \cite{GonzalezHL13}.

\begin{Def}
A fiberwise symplectic deformation in $U\subset\R^{2n}$ and with base $G\subset\R^m$ is a smooth diffeomorphism
\begin{align*}
\Phi:\quad U\times G &\longrightarrow U\times G \\
(z,\boldsymbol{t})&\longmapsto \big(\phi_{\boldsymbol{t}}(z),\tau(\bm t)\big),
\end{align*}
such that for all $\bm t\in G$, $\phi_{\bm t}:U\to U$ is symplectic and $\tau:G\to G$ is the base of the deformation. If for all $\bm t\in G$, $\phi_{\bm t}(z)=\phi(z,\bm{t})$ is exact symplectic, we will say that the fiberwise deformation is Hamiltonian.
\end{Def}

\begin{Def}
The primitive function of a fiberwise Hamiltonian deformation $\Phi:U\times G\to U\times G$ is a smooth function $p:U\times G\to \R$ such that for all $\bm t \in G$, the function $p_{\bm t}(z)=p(z,\bm t)$ is the primitive function of $\phi_{\bm t}(z)$. 
\end{Def}

\begin{Def}
Let $\Phi:U\times G \to U\times G$ be a fiberwise Hamiltonian deformation and let $p: U\times G \to \R$ be the primitive function of $\Phi$.
\begin{enumerate}[i)]
    \item The generator of $\Phi$ is the function $\mathcal{F}: U\times G\to \R^{2n\times m}$ defined as
    \begin{equation}\label{eq:generator}
    \mathcal{F}_{\bm t}(z) := {\rm D}_{\bm t} \phi\big(\phi_{\tau^{-1}(\bm t)}^{-1}(z),\tau^{-1}({\bm t})\big),
    \end{equation}
    where $\mathcal{F}_{\bm t}(z)=\mathcal{F}(z,\bm t) = \Big(\mathcal{F}^1(z,\bm t)\ \, \mathcal{F}^2(z,\bm t)\ \, \dots\ \,  \mathcal{F}^m(z,\bm t)    \Big)$.
    \item The moment map of $\Phi$ is the function $\mathcal{M}: U\times G \to \R^m$ defined as
    \[
        \mathcal{M}_{\bm t} (z)^\top := a(z)^\top  \mathcal{F}_{ \bm t}(z) - {\rm D}_{\bm t} p\big(\phi^{-1}_{\tau^{-1}(\bm t)}(z),\tau^{-1}({\bm t})\big),
    \]
    where $\mathcal{M}_{\bm t} (z) = \mathcal{M}(z,{\bm t}) = \Big(\mathcal{M}^1(z,\bm t)\ \, \mathcal{M}^2(z,\bm t) \ \,\dots \ \,\mathcal{M}^m(z,\bm t)    \Big)^\top $.
\end{enumerate}
\end{Def}

\begin{lemma} \label{lem:moment_gen}
For all ${\bm t} \in G$, the moment map $\mathcal{M}$ satisfies 
\[
\mathcal{F}_{\bm t} (z) = \Omega(z)^{-1} \big({\rm D}_z \mathcal{M}_{\bm t}(z)\big)^\top.
\]
\end{lemma}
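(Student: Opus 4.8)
The plan is to show that $\mathcal{M}_{\bm t}$ is a Hamiltonian generating the vector field $\mathcal{F}_{\bm t}$ with respect to $\boldsymbol{\omega}$, which amounts to verifying that ${\rm D}_z\mathcal{M}_{\bm t}(z)^\top = \Omega(z)\mathcal{F}_{\bm t}(z)$. The natural way to do this is to differentiate the defining relation for the primitive function $p_{\bm s}$ of $\phi_{\bm s}$, namely
\[
a\bigl(\phi_{\bm s}(w)\bigr)^\top {\rm D}_w \phi_{\bm s}(w) - a(w)^\top = {\rm D}_w p_{\bm s}(w),
\]
with respect to the parameter $\bm s$, and then combine it with the exactness relation $\Omega(w) = {\rm D}a(w)^\top - {\rm D}a(w)$ and the symplecticity of $\phi_{\bm s}$. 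It will be cleanest to first prove the identity in the ``Lagrangian'' coordinates $w = \phi_{\tau^{-1}(\bm t)}^{-1}(z)$, $\bm s = \tau^{-1}(\bm t)$, where $\mathcal{F}$ and $\mathcal{M}$ have their simplest form, and then push it forward by $\phi_{\bm s}$ using symplecticity.

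First I would fix $\bm s$ and $w$, set $z = \phi_{\bm s}(w)$, and differentiate the primitive-function identity in $\bm s$ (for a single component $\bm s_r$, to keep things scalar). Using ${\rm D}a(\phi_{\bm s}(w)) = {\rm D}a(z)$, the chain rule applied to ${\rm D}\bigl(a(\phi_{\bm s}(w))\bigr)$, and the fact that ${\rm D}_{\bm s_r}\phi_{\bm s}(w) = \mathcal{F}^r_{\bm t}(z)$ by definition \eqref{eq:generator}, I would obtain an expression for ${\rm D}_w\bigl({\rm D}_{\bm s_r} p_{\bm s}(w)\bigr)$ in terms of $a$, ${\rm D}a$, $\mathcal{F}^r$ and ${\rm D}_w\phi_{\bm s}(w)$. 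The key algebraic step is to recognize the antisymmetric combination ${\rm D}a(z)^\top - {\rm D}a(z) = \Omega(z)$ emerging; one should get something like ${\rm D}_w\bigl(\partial_{\bm s_r} p_{\bm s}(w)\bigr) = \mathcal{F}^r_{\bm t}(z)^\top \Omega(z)\, {\rm D}_w\phi_{\bm s}(w) + {\rm D}_w\bigl(a(z)^\top\mathcal{F}^r_{\bm t}(z)\bigr) - (\text{a symmetric leftover that vanishes})$. Comparing with the definition $\mathcal{M}^r_{\bm t}(z)^\top = a(z)^\top\mathcal{F}^r_{\bm t}(z) - \partial_{\bm s_r}p_{\bm s}(w)$ and using the chain rule ${\rm D}_w[\,\cdot\,] = {\rm D}_z[\,\cdot\,]\,{\rm D}_w\phi_{\bm s}(w)$ together with invertibility of ${\rm D}_w\phi_{\bm s}(w)$, I would cancel the common factor ${\rm D}_w\phi_{\bm s}(w)$ and read off ${\rm D}_z\mathcal{M}^r_{\bm t}(z)^\top = \Omega(z)\mathcal{F}^r_{\bm t}(z)$. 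Assembling the columns $r = 1,\dots,m$ gives the matrix identity, and multiplying by $\Omega(z)^{-1}$ yields the claim.

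The main obstacle I anticipate is bookkeeping: tracking transposes, making sure the ``symmetric leftover'' terms (those built from ${\rm D}^2 a$ contracted symmetrically, or from the equality of mixed partials $\partial_{\bm s_r}{\rm D}_w p$ vs ${\rm D}_w\partial_{\bm s_r}p$) genuinely cancel, and handling the $\tau^{-1}$ reparameterization carefully since $\bm t$ and $\bm s = \tau^{-1}(\bm t)$ differ. A clean way to sidestep part of this is to note that ${\rm D}_z\mathcal{M}_{\bm t}$ only involves $z$-derivatives at fixed $\bm t$, so the $\tau$-dependence is inert once $\bm t$ is frozen, and to work throughout with the substitution $z = \phi_{\bm s}(w)$ so that all $\bm s$-derivatives of $\phi$ are literally the columns of $\mathcal{F}$. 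The symmetry cancellations should follow from differentiating the symplecticity relation \eqref{eq:symplecticity} in $\bm s$ (which expresses $\partial_{\bm s}\bigl({\rm D}_w\phi_{\bm s}^\top \Omega(\phi_{\bm s}){\rm D}_w\phi_{\bm s}\bigr) = 0$), exactly as in the classical fact that the infinitesimal generator of a symplectic isotopy is locally Hamiltonian; I would invoke that structure rather than expand every second derivative.
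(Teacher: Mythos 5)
Your proposal is correct and is essentially the paper's own argument: the paper differentiates $\mathcal{M}^i\circ\Phi$ with respect to $z$, which amounts to exactly your plan of differentiating the primitive-function identity in the deformation parameter, using equality of mixed partials of $p$, the exactness relation $\Omega(z)={\rm D}a(z)^\top-{\rm D}a(z)$, the chain rule at $z=\phi_{\bm s}(w)$, and cancellation of the invertible factor ${\rm D}_z\phi$. The only (cosmetic) difference is that no ``symmetric leftover'' actually arises and no differentiated symplecticity identity is needed --- the exactness relation alone produces the antisymmetric combination, so your displayed intermediate identity holds with zero remainder.
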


\begin{proof}
For $i=1,2,\dots,m$, let us differentiate $\mathcal{M}^i\big(\phi_{\bm t}(z),\tau({\bm t})\big)$ with respect to $z$ as
\begin{align*}
{\rm D}_z & \Big( \mathcal{M}^i\big(\phi_{\bm t}(z),\tau({\bm t})\big)\Big)  = {\rm D}_z\Big( a\big(\phi_{\bm t}(z)\big)^\top \partial_{{t}_i}\phi_{\bm t}(z) - \partial_{{t}_i}p_{\bm t}(z)\Big)\\
&= a\big(\phi_{\bm t}(z)\big)^\top \partial_{{t}_i}{\rm D}_{z}\phi_{t}(z) + \partial_{{t}_i}\phi_{\bm t}(z)^\top {\rm D} a\big(\phi_{\bm t}(z)\big) {\rm D}_z \phi_{\bm t}(z)\\
& \phantom{=} - \partial_{{t}_i}\Big(a\big(\phi_{\bm t}(z) \big)^\top {\rm D}_z\phi_{\bm t}(z) - a(z)^\top\Big) \\
&= a\big(\phi_{\bm t}(z)\big)^\top \partial_{{t}_i}{\rm D}_{z}\phi_{\bm t}(z) + \partial_{{t}_i}\phi_{\bm t}(z)^\top {\rm D} a\big(\phi_{\bm t}(z)\big) {\rm D}_z \phi_{\bm t}(z)\\
& \phantom{=} - a\big(\phi_{\bm t}(z) \big)^\top \partial_{{t}_i}{\rm D}_{z}\phi_{\bm t}(z) - \partial_{{t}_i}\phi_{\bm t}(z)^\top {\rm D}a\big(\phi_{\bm t}(z)\big)^\top {\rm D}_z \phi_{\bm t}(z)\\
& = - \partial_{{t}_i}\phi_{\bm t}(z)^\top \Omega\big(\phi_{\bm t}(z)\big) {\rm D}_z \phi_{\bm t}(z).
\end{align*}
On the other hand, applying the chain rule we also have
\[
{\rm D}_z \Big(\mathcal{M}^i\big(\phi_{\bm t}(z),\tau({\bm t})\big)\Big) = {\rm D}_z \mathcal{M}^i\big(\phi_{\bm t}(z),\tau({\bm t})\big){\rm D}_z \phi_{\bm t}(z). 
\]
Assuming that ${\rm D}_z\phi_{\bm t}(z)$ is invertible, we obtain
\begin{equation*}
    {\rm D}_z\mathcal{M}^i\big(\phi_{\bm t}(z), \tau({\bm t})\big) = - \partial_{{t}_i}\phi_{\bm t}(z)^\top \Omega\big(\phi_{\bm t}(z)\big),
\end{equation*}
and evaluating at $\bar z=\phi^{-1}_{\tau^{-1}(\bm t)}(z)$ and $\bar {\bm t} = \tau^{-1}({\bm t})$, 
\begin{equation*}
    {\rm D}_z\mathcal{M}^i(z,\bm t) = - \partial_{{t}_i} \phi\big(\phi^{-1}_{\tau^{-1}(\bm t)}(z),\tau^{-1}(\bm t)\big)^\top \Omega(z).
\end{equation*}
Equivalently, using the definition of the generator given by \eqref{eq:generator}, we conclude
\begin{equation*}
    \mathcal{F}^{i}(z,\bm t) = \Omega(z)^{-1}\big({\rm D}_z \mathcal{M}^i(z,\bm t)\big)^\top
\end{equation*}
\end{proof}
\begin{remark} \label{ap:Moment-Ham}
Note that, for $i=1,2,...,m$, the moment map $\mathcal{M}$ gives the Hamiltonian $\mathcal{M}^{i}$ for the vector field $\mathcal{F}^ {i}$ obtained by differentiating the symplectomorphism $\phi_{\bm t}$ with respect to ${t}_i$.
\end{remark}

In the context of the present paper, let us consider flows of a quasi-periodic Hamiltonian system, defined by the function $H$, with frequencies $\hat \alpha\in\R^\ell$. As described in Section \ref{sec:setting}, for each $(z,\varphi)$ the flow $\tilde{\phi}: I_{z,\varphi}\times U\times \T^\ell\to U\times\T^\ell$, where $I_{z,\varphi}\subset\R$ is the maximal interval of existence for initial conditions $(z,\varphi)$, adopts the form
\begin{equation*}
  \tilde{\phi}_t(z,\varphi)=\begin{pmatrix}\phi_t(z,\varphi)\\ \varphi + \hat{\alpha}t \end{pmatrix},
\end{equation*}
where the evolution operator $\phi_t$ is fiberwise exact symplectic for all $t\in I_{z,\varphi}$. Let us first define the rotation operator $R_t(\varphi):=\varphi + \hat\alpha t$. We observe that for fixed $t$, and each $(z,\varphi)$, we can identify time-$t$ maps with fiberwise Hamiltonian deformations $\Phi: U \times G\to U\times G$ 
\begin{align*}
\Phi:\quad U\times G &\longrightarrow U\times G \\
(z,\boldsymbol{t})&\longmapsto \big(\phi_{t}(z,\varphi),\tau(\bm t)\big),
\end{align*}
where $G\subset \R^{\ell +1}$, ${\bm t} = (t,\varphi)$, and $\tau({\bm t}) = \big(t,R_t(\varphi) \big)$. For a quasi-periodic Hamiltonian $H_{\varepsilon}(z,\varphi)$ that depends on some parameter $\varepsilon\in\R$, we can again identify time-$t$ maps with fiberwise Hamiltonian deformations $\Phi: U \times G\to U\times G $
\begin{align*}
\Phi:\quad U\times G &\longrightarrow U\times G \\
(z,\boldsymbol{t})&\longmapsto \big(\phi_{t,\varepsilon}(z,\varphi),\tau(\bm t)\big),
\end{align*}
where $G\subset\R^{\ell +2}$, ${\bm t} = (t,\varepsilon,\varphi)$, and $\tau({\bm t}) = (t,\varepsilon,R_t(\varphi) \big)$. For this case, we will also write $\Phi_{\bm t}(z) = \Phi_{t,\varepsilon}(z,\varphi)$.

\begin{lemma}\label{lem:moment}
The moment map of $\Phi_{t,\varepsilon}(z,\varphi)$ is given by
\begin{align*}
    \mathcal{M}^t\big(\phi_{t,\varepsilon}(z,\varphi),\tau({\bm t})\big)&= H_\varepsilon\big(\phi_{t,\varepsilon}(z,\varphi),R_{t}(\varphi)\big)\\
    \mathcal{M}^\varepsilon\big(\phi_{t,\varepsilon}(z,\varphi),\tau({\bm t})\big) &= \int_0^t \partial_\varepsilon H_\varepsilon\big(\phi_{s,\varepsilon}(z,\varphi),R_s(\varphi)\big)\ ds \\
 \mathcal{M}^\varphi\big(\phi_{t,\varepsilon}(z,\varphi),\tau({\bm t})\big)   &= \int^t_0 {\rm D}_\varphi H_\varepsilon\big(\phi_{s,\varepsilon}(z,\varphi),R_s(\varphi)\big)\ ds
\end{align*}
\end{lemma}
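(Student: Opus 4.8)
The goal is to identify the three components of the moment map for the fiberwise Hamiltonian deformation $\Phi_{t,\varepsilon}(z,\varphi)$, where the base coordinates are $\bm t=(t,\varepsilon,\varphi)$ and $\tau(\bm t)=(t,\varepsilon,R_t(\varphi))$. By definition, the moment map is
$\mathcal M_{\bm t}(z)^\top = a(z)^\top \mathcal F_{\bm t}(z) - {\rm D}_{\bm t}p\big(\phi^{-1}_{\tau^{-1}(\bm t)}(z),\tau^{-1}(\bm t)\big)$,
so the plan is to compute the generator $\mathcal F$ (the $\bm t$-derivative of the flow, suitably transported) componentwise in $t$, $\varepsilon$, and $\varphi$, and then pair it with the action form $a$ and subtract the appropriate derivative of the primitive function $p_t$, for which we already have the explicit expression from Lemma \ref{lemma:primitive}. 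To avoid carrying the awkward composition with $\tau^{-1}$, I would follow the same device as in the proof of Lemma \ref{lem:moment_gen}: first establish the identity with everything evaluated ``along the flow'', i.e. prove
$\mathcal M^{\bullet}\big(\phi_{t,\varepsilon}(z,\varphi),\tau(\bm t)\big) = (\text{claimed expression})$
directly, which is exactly the form in which the statement is phrased.

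First I would treat the $t$-component. Here $\mathcal F^t = \partial_t\phi$ transported, which along the flow is simply $X_{H_\varepsilon}\big(\phi_{t,\varepsilon}(z,\varphi),R_t(\varphi)\big)$ together with the $\hat\alpha$ part of $\tilde\phi$; but the action-form pairing $a^\top X_{H_\varepsilon}$ minus $\partial_t p_t$ is already isolated in \eqref{eq:prim}, since $p_t = \int_0^t\big(a^\top X_H - H\big)\,ds$ gives $\partial_t p_t = a\big(\phi_t\big)^\top X_H\big(\phi_t,R_t\big) - H\big(\phi_t,R_t\big)$. Hence $\mathcal M^t\big(\phi_{t,\varepsilon}(z,\varphi),\tau(\bm t)\big) = H_\varepsilon\big(\phi_{t,\varepsilon}(z,\varphi),R_t(\varphi)\big)$, which is the first claimed formula. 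For the $\varepsilon$-component, $\mathcal F^\varepsilon$ along the flow is $\partial_\varepsilon\phi_{t,\varepsilon}$, computed via the variational equation; pairing with $a$ and subtracting $\partial_\varepsilon p_t$ (differentiating \eqref{eq:prim} under the integral sign with respect to $\varepsilon$) should telescope exactly as in the derivation of \eqref{eq:ap_primitiveZ}, leaving only $\int_0^t \partial_\varepsilon H_\varepsilon\big(\phi_{s,\varepsilon}(z,\varphi),R_s(\varphi)\big)\,ds$. The $\varphi$-component is the analogue with $\varepsilon$ replaced by $\varphi$, and here I would reuse \eqref{eq:ap_primitivevarphi} essentially verbatim: the $a\big(\phi_t\big)^\top {\rm D}_\varphi\phi_t$ term in $\mathcal M^\varphi$ is precisely cancelled by the matching term in ${\rm D}_\varphi p_t$, leaving $\int_0^t {\rm D}_\varphi H_\varepsilon\big(\phi_{s,\varepsilon}(z,\varphi),R_s(\varphi)\big)\,ds$.

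The main technical nuisance will be bookkeeping the transport by $\tau^{-1}$ in the definition of $\mathcal F_{\bm t}$ and $\mathcal M_{\bm t}$: the rotation operator $R_t$ acts nontrivially on the $\varphi$-slot, so one must be careful that ``$\partial_t$ at fixed $(z,\varphi)$'' of $\phi_t(z,\varphi)$ versus the generator's $\partial_t\phi$ evaluated at the pre-image differ by the $R_t$-shift; writing everything composed with the flow (as the statement does) is what makes these shifts invisible, and this is the step I would be most careful to state cleanly. A secondary point is the differentiation under the integral sign in $\partial_\varepsilon p_t$ and $\partial_\varphi p_t$, which requires the smoothness hypotheses already assumed throughout Section \ref{sec:setting}; once those are granted, each of the three computations is a short telescoping argument mirroring Lemma \ref{lemma:primitive} and Lemma \ref{lem:moment_gen}, and the proof concludes by collecting the three formulas. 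One may also remark, via Remark \ref{ap:Moment-Ham}, that $\mathcal M^t = H_\varepsilon$ simply recovers the fact that the Hamiltonian generates its own flow, which is a useful consistency check.
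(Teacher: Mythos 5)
Your plan is correct and follows essentially the same route as the paper: for $\mathcal{M}^t$ you read off $\partial_t p_t$ from \eqref{eq:prim} and cancel the $a^\top X_{H_\varepsilon}$ terms, for $\mathcal{M}^\varepsilon$ you differentiate the primitive function under the integral sign and telescope as in the proof of Lemma \ref{lemma:primitive} so that only $\int_0^t \partial_\varepsilon H_\varepsilon\,ds$ survives, and for $\mathcal{M}^\varphi$ you invoke \eqref{eq:ap_primitivevarphi} directly, all phrased along the flow to sidestep the $\tau^{-1}$ bookkeeping. This matches the paper's proof, including the observation that the first identity also follows from Remark \ref{ap:Moment-Ham}.
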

\begin{proof}
For $\mathcal{M}^t\big(\phi_{t,\varepsilon}(z,\varphi),\tau({\bm t})\big)$, let us use the definition of the moment map and the primitive function of $\phi_t$ as given in \eqref{eq:prim}---which, for each $(t,\varepsilon,\varphi)$, coincides with the primitive function of the fiberwise Hamiltonian deformation $\Phi_{t,\varepsilon}(z,\varphi)$---to obtain 
\begin{align*}
    \mathcal{M}^t\big(\phi_{t,\varepsilon}(z,\varphi),\tau({\bm t})\big)  &= a\big(\phi_{t,\varepsilon}(z,\varphi) \big)^\top \partial_t \phi_{t,\varepsilon}(z,\varphi) - \partial_t p_{ t,\varepsilon}(z,\varphi)\\
     &= a\big(\phi_{t,\varepsilon}(z,\varphi) \big)^\top X_{H_\varepsilon}\big(\phi_{t,\varepsilon}(z,\varphi),R_t(\varphi) \big)\\
     & \phantom{=} - a\big(\phi_{t,\varepsilon}(z,\varphi)\big)^\top  X_{H_\varepsilon}\big(\phi_{t,\varepsilon}(z,\varphi),R_t(\varphi)\big) \\
     &\phantom{=}+ H_\varepsilon\big(\phi_{t,\varepsilon}(z,\varphi),R_t(\varphi)\big)\\
     &= H_\varepsilon\big(\phi_{t,\varepsilon}(z,\varphi),R_t(\varphi)\big).
\end{align*}
Note that this result could have been obtained directly from Remark \ref{ap:Moment-Ham}.

For $\mathcal{M}^\varepsilon\big(\phi_{t,\varepsilon}(z,\varphi),\tau({\bm t})\big)$, let us first differentiate the primitive function of $\Phi$ with respect to $\varepsilon$
\begin{align*}
    \partial_\varepsilon p_{t,\varepsilon}(z,\varphi) &=
    \partial_\varepsilon \Bigg(\int_0^t \bigg( a\big(\phi_{s,\varepsilon}(z,\varphi)\big)^\top  X_{H_\varepsilon}\big(\phi_{s,\varepsilon}(z,\varphi),R_s(\varphi)\big) \\
    &\phantom{=}-H_\varepsilon\big(\phi_{s,\varepsilon}(z,\varphi),R_s(\varphi)\big)  \bigg) ds \Bigg) \\
    &=\int_0^t \bigg(X_{H_\varepsilon}\big(\phi_{s,\varepsilon}(z,\varphi),R_s(\varphi)\big)^\top {\rm D}a\big(\phi_{s,\varepsilon}(z,\varphi)\big)\partial_\varepsilon\phi_{s,\varepsilon}(z,\varphi) \\
    &\phantom{=} + a\big(\phi_{s,\varepsilon}(z,\varphi)\big)^\top  \Big({\rm D}_z X_{H_\varepsilon}\big(\phi_{s,\varepsilon}(z,\varphi),R_s(\varphi)\big)     \partial_\varepsilon\phi_{s,\varepsilon}(z,\varphi)  \\
    &\phantom{=} + \partial_\varepsilon X_{H_\varepsilon}\big(\phi_{s,\varepsilon}(z,\varphi),R_s(\varphi)\big)\Big)
    \\
    &\phantom{=} -{\rm D}_z H_\varepsilon\big(\phi_{s,\varepsilon}(z,\varphi),R_s(\varphi)\big) \partial_\varepsilon\phi_{s,\varepsilon}(z,\varphi)\\
    &\phantom{=} - \partial_\varepsilon H_\varepsilon\big(\phi_{s,\varepsilon}(z,\varphi),R_s(\varphi)\big)
    \bigg)\ ds.
\end{align*}
We use that ${\rm D}_zH_\varepsilon(z,\varphi)=-X_{H_\varepsilon}^\top(z,\varphi)\Omega(z)$ and \eqref{eq:exactness} to obtain
\begin{align*}
    \partial_\varepsilon & p_{t,\varepsilon}(z,\varphi) =\\
    &\int_0^t \bigg(X_{H_\varepsilon}\big(\phi_{s,\varepsilon}(z,\varphi),R_s(\varphi)\big)^\top {\rm D}a\big(\phi_{s,\varepsilon}(z,\varphi)\big)^\top\partial_\varepsilon\phi_{s,\varepsilon}(z,\varphi)\\
    & \phantom{=} + a\big(\phi_{s,\varepsilon}(z,\varphi)\big)^\top  \Big({\rm D}_z X_{H_\varepsilon}\big(\phi_{s,\varepsilon}(z,\varphi),R_s(\varphi)\big)     \partial_\varepsilon\phi_{s,\varepsilon}(z,\varphi)  \\
    &\phantom{=} + \partial_\varepsilon X_{H_\varepsilon}\big(\phi_{s,\varepsilon}(z,\varphi),R_s(\varphi)\big)\Big)
    - \partial_\varepsilon H_\varepsilon\big(\phi_{s,\varepsilon}(z,\varphi),R_s(\varphi)\big)
    \bigg) ds.
\end{align*}
We can rewrite it as
\begin{align*}
   \partial_\varepsilon p_{t,\varepsilon}(z,\varphi) &= \int_0^t \bigg( \frac{d}{ds}\Big( a\big(\phi_{s,\varepsilon}(z,\varphi) \big)\Big)^\top \partial_\varepsilon\phi_{s,\varepsilon}(z,\varphi)\\
    &\phantom{=} + a\big(\phi_{s,\varepsilon}(z,\varphi) \big)^\top \frac{d}{ds}      \big(\partial_\varepsilon \phi_{s,\varepsilon}(z,\varphi) \big)  \\
    &\phantom{=} - \partial_\varepsilon H_\varepsilon \big(\phi_{s,\varepsilon}(z,\varphi),R_s(\varphi)\big) \bigg)ds 
\end{align*}    
and obtain  
\begin{align*}    
    \partial_\varepsilon p_{t,\varepsilon}(z,\varphi) &= \int^t_0 \bigg(\frac{d}{ds}\left(a\big(\phi_{s,\varepsilon}(z,\varphi)\big)^\top\partial_\varepsilon\phi_{s,\varepsilon}(z,\varphi)\right)\\
    &\phantom{=} -\partial_\varepsilon H_\varepsilon\big(\phi_{s,\varepsilon}(z,\varphi),R_s(\varphi) \big) \bigg)ds\\
    &= a\big(\phi_{t,\varepsilon}(z,\varphi)\big)^\top \partial_\varepsilon \phi_{t,\varepsilon}(z,\varphi)- \int^t_0 \partial_\varepsilon H_\varepsilon\big(\phi_{s,\varepsilon}(z,\varphi),R_s(\varphi) \big) ds.
    \end{align*}
Then, by definition, we have
\begin{align*}
\mathcal{M}^\varepsilon\big(\phi_{t,\varepsilon}(z,\varphi),\tau({\bm t})\big) &= a\big(\phi_{t,\varepsilon}(z,\varphi)\big)^\top \partial_\varepsilon\phi_{t,\varepsilon}(z,\varphi)-\partial_\varepsilon p_{t,\varepsilon}(z,\varphi)\\
&= \int^t_0 \partial_\varepsilon H_\varepsilon\big(\phi_{s,\varepsilon}(z,\varphi),R_s(\varphi) \big)\ ds.
\end{align*}
For $\mathcal{M}^\varphi\big(\phi_{t,\varepsilon}(z,\varphi),\tau(\bm t)\big)$, we will use the the expression of ${\rm D}_\varphi p_t$ given by \eqref{eq:ap_primitivevarphi}. Then for the primitive function of $\Phi$, we have
\begin{align*}
{\rm D}_\varphi p_{t,\varepsilon}&(z,\varphi) = \\ 
& a\big(\phi_{t,\varepsilon}(z,\varphi)\big)^\top{\rm D}_\varphi \phi_{t,\varepsilon}(z,\varphi) - \int^t_0 \Big({\rm D}_\varphi H_\varepsilon\big(\phi_{s,\varepsilon}(z,\varphi),R_s(\varphi)\big) \Big)ds.
\end{align*}
Hence, 
\begin{align*}
\mathcal{M}^\varphi\big(\phi_{t,\varepsilon}(z,\varphi),\tau({\bm t})\big) &= a\big(\phi_{t,\varepsilon}(z,\varphi)\big)^\top {\rm D}_\varphi \phi_{t,\varepsilon}(z,\varphi)-{\rm D}_\varphi p_{t,\varepsilon}(z,\varphi)\\
&= \int^t_0 {\rm D}_\varphi H_\varepsilon\big(\phi_{s,\varepsilon}(z,\varphi),R_s(\varphi) \big)\ ds.
\end{align*}
Note that it is possible to obtain expressions for $\mathcal{M}(z,\bf{t})$, but we provide here the expressions for $\mathcal{M}(\phi_{t,\varepsilon}(z,\varphi),\tau({\bm t})\big)$ since they will be useful in the next section.

\end{proof}

\section{Zero averages}\label{ap:zero_average}
For the continuation of an invariant torus $\mathcal{K}$ with respect to parameters of the Hamiltonian we need the averages of $\eta^3$ given by 
\begin{equation*}
	\eta^3(\theta,\varphi) = 
	\begin{pmatrix} 
	-{\rm D}_\theta K(\bar\theta,\bar\varphi)^\top \Omega\big(K(\bar\theta,\bar\varphi)\big) \partial_\varepsilon \phi_{T,\varepsilon}\big(K(\theta,\varphi),\varphi\big)
	\\
	-\mathcal{X}_H(\bar\theta,\bar\varphi)^\top \Omega\big(K(\bar\theta,\bar\varphi)\big) \partial_\varepsilon \phi_{T,\varepsilon}\big(K(\theta,\varphi),\varphi\big)
	\end{pmatrix}
\end{equation*}
to be zero in order for the small divisors cohomological equations from Section \ref{sec:cnte} to be solvable. Recall the definitions $\bar\theta := \theta + \omega$ and $\bar\varphi=\varphi + \alpha$, and let us define
\begin{equation*}
	\begin{pmatrix}
	\eta^{31}(\theta,\varphi)
	\\
	\eta^{32}(\theta,\varphi)
	\end{pmatrix}	:= \begin{pmatrix} 
	-{\rm D}_\theta K(\bar\theta,\bar\varphi)^\top \Omega\big(K(\bar\theta,\bar\varphi)\big) \partial_\varepsilon \phi_{T,\varepsilon}\big(K(\theta,\varphi),\varphi\big)
	\\
	-\mathcal{X}_H(\bar\theta,\bar\varphi)^\top \Omega\big(K(\bar\theta,\bar\varphi)\big) \partial_\varepsilon \phi_{T,\varepsilon}\big(K(\theta,\varphi),\varphi\big)
	\end{pmatrix}. 
\end{equation*}
\begin{lemma}
The averages of $\eta^{31}$ and $\eta^{32}$ are zero.
\end{lemma}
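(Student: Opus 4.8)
The plan is to mimic the structure of the proof in Appendix~\ref{ap:quadratiically small averages}, but now exploiting the \emph{exact} invariance of $\mathcal K$ (since here $\mathcal K$ satisfies \eqref{eq:inv_flowmap} on the nose, the error term $E^K$ is absent) and the machinery of fiberwise symplectic deformations and moment maps from Appendix~\ref{ap:deformations}. The key observation is that $\partial_\varepsilon\phi_{T,\varepsilon}$ is precisely the generator component $\mathcal F^\varepsilon$ of the fiberwise Hamiltonian deformation $\Phi_{t,\varepsilon}$, whose Hamiltonian is the moment map component $\mathcal M^\varepsilon$ computed in Lemma~\ref{lem:moment}. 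By Lemma~\ref{lem:moment_gen}, $\Omega(z)\,\partial_\varepsilon\phi_{T,\varepsilon}(\phi^{-1}_{\tau^{-1}(\bm t)}(z),\dots) = ({\rm D}_z\mathcal M^\varepsilon)^\top$, so contracting $\Omega\big(K(\bar\theta,\bar\varphi)\big)\,\partial_\varepsilon\phi_{T,\varepsilon}(K(\theta,\varphi),\varphi)$ against a tangent vector turns it into a directional derivative of $\mathcal M^\varepsilon$ along $\mathcal K$, whose $\theta$- and $\varphi$-averages vanish.

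For $\langle\eta^{31}\rangle$ I would proceed as follows. Using \eqref{eq:Omegaflow} and the invariance ${\rm D}_z\phi_T(K(\theta,\varphi),\varphi){\rm D}_\theta K(\theta,\varphi)={\rm D}_\theta K(\bar\theta,\bar\varphi)$, rewrite ${\rm D}_\theta K(\bar\theta,\bar\varphi)^\top\Omega\big(K(\bar\theta,\bar\varphi)\big)\partial_\varepsilon\phi_{T,\varepsilon}\big(K(\theta,\varphi),\varphi\big)$ in terms of quantities at $(\theta,\varphi)$; alternatively, apply Lemma~\ref{lem:moment_gen} directly at the point $z=K(\bar\theta,\bar\varphi)=\phi_{T}(K(\theta,\varphi),\varphi)$ to get $\Omega\big(K(\bar\theta,\bar\varphi)\big)\partial_\varepsilon\phi_{T,\varepsilon}\big(K(\theta,\varphi),\varphi\big)=\big({\rm D}_z\mathcal M^\varepsilon(K(\bar\theta,\bar\varphi),\tau(\bm t))\big)^\top$. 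Contracting with ${\rm D}_\theta K(\bar\theta,\bar\varphi)$ gives ${\rm D}_\theta\big(\mathcal M^\varepsilon(K(\bar\theta,\bar\varphi),\dots)\big)^\top$ up to terms that are themselves total $\theta$-derivatives (arising from the explicit dependence of $\mathcal M^\varepsilon$ on the base point that is not through $K$, which in fact is none, since $\tau(\bm t)=(T,\varepsilon,\varphi+\hat\alpha T)$ depends only on $\varphi$). Hence $\eta^{31}$ is, up to a total $\theta$-derivative, $-{\rm D}_\theta$ of a function, so $\langle\eta^{31}\rangle=0$ because averaging a $\theta$-derivative over $\T^{d-1}$ gives zero. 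The technical care needed is to handle the $\varphi$-dependence hidden in $\bar\varphi=\varphi+\alpha$ and in $\tau(\bm t)$, but since we average over all of $\T^{d-1}\times\T^\ell$ this causes no trouble.

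For $\langle\eta^{32}\rangle$ the argument is analogous but uses the invariance of $\mathcal X_H$ under ${\rm D}_z\phi_T$ (equation~\eqref{eq:inv_zgeo}) in place of the invariance of ${\rm D}_\theta K$. Thus $\mathcal X_H(\bar\theta,\bar\varphi)^\top\Omega\big(K(\bar\theta,\bar\varphi)\big)\partial_\varepsilon\phi_{T,\varepsilon}\big(K(\theta,\varphi),\varphi\big) = \mathcal X_H(\bar\theta,\bar\varphi)^\top\big({\rm D}_z\mathcal M^\varepsilon(K(\bar\theta,\bar\varphi),\dots)\big)^\top$. Expanding $\mathcal X_H=X_H\big(K,\varphi\big)-{\rm D}_\varphi K\,\hat\alpha$: the $X_H$-term pairs with ${\rm D}_z\mathcal M^\varepsilon$ to give a Lie-derivative of $\mathcal M^\varepsilon$ along the Hamiltonian vector field, which by Lemma~\ref{lem:moment} and $\mathcal M^\varepsilon=\int_0^T\partial_\varepsilon H_\varepsilon(\phi_s,R_s)\,ds$ is (by the fundamental theorem of calculus and $X_{H_\varepsilon}$ being the Hamiltonian vector field) expressible so that its average over the torus vanishes using the invariance and the ergodicity of $(\omega,\alpha)$; the ${\rm D}_\varphi K$-term combines into a total $\varphi$-derivative of $\mathcal M^\varepsilon(K(\theta,\varphi),\dots)$, whose average over $\T^\ell$ is zero. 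I would carry this out by first writing both $\langle\eta^{31}\rangle$ and $\langle\eta^{32}\rangle$ as averages of exact derivatives of a single scalar built from $p_T$ and $\mathcal M^\varepsilon$, exactly in the style of the $\langle\eta^{31}\rangle$, $\langle\eta^{32}\rangle$ computation in Appendix~\ref{ap:quadratiically small averages} but with all the Taylor-remainder terms $\Delta^1a$, $\Delta^2a$, $\Delta^2H$ set to zero.

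The main obstacle I anticipate is bookkeeping the two sources of $\varphi$-dependence --- the one entering through $K(\cdot,\varphi)$ and the one entering through the base shift $R_T(\varphi)=\varphi+\hat\alpha T$ in $\mathcal M^\varepsilon(\cdot,\tau(\bm t))$ --- and making sure that when I invoke ``the average of a total derivative is zero'' I am differentiating in the right variable and that no boundary terms appear on $\T^{d-1}\times\T^\ell$. A secondary subtlety is justifying that, since $\mathcal K$ is \emph{exactly} invariant here, the whole of $\eta^3$ is a genuine total derivative and not merely quadratically small; this is exactly where Lemma~\ref{lem:moment_gen} (moment map $\Rightarrow$ Hamiltonian of the generator) and Lemma~\ref{lem:moment} (explicit formula for $\mathcal M^\varepsilon$) do the work that the Taylor remainders did in the Newton-step version.
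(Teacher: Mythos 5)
Your proposal follows essentially the same route as the paper's proof: you identify $\partial_\varepsilon\phi_{T,\varepsilon}$ with the generator $\mathcal{F}^\varepsilon$ of the fiberwise Hamiltonian deformation, use Lemma~\ref{lem:moment_gen} to turn $\Omega\,\partial_\varepsilon\phi_{T,\varepsilon}$ into ${\rm D}_z\mathcal{M}^\varepsilon$, so that $\eta^{31}$ becomes a total $\theta$-derivative, and you treat $\eta^{32}$ by expanding $\mathcal{X}_H$ and using the explicit formula for $\mathcal{M}^\varepsilon$ from Lemma~\ref{lem:moment}, combining the pieces into a total $s$-derivative plus a total $\varphi$-derivative and invoking the rotation-invariance of averages, exactly as the paper does. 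The argument is correct.
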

\begin{proof}
Let us consider the following fiberwise Hamiltonian deformation $\Phi_{T,\varepsilon}:U \times G \to U\times G$
\begin{align*}
\Phi:\quad U\times G &\longrightarrow U\times G \\
(z,\boldsymbol{t})&\longmapsto \big(\phi_{T,\varepsilon}(z,\varphi),\tau(\bm t)\big),
\end{align*}
with $G\subset\R^{\ell +2}$, $\bm t = (T,\varepsilon,\varphi)$, and $\tau(\bm t) = \big(T,\varepsilon, \bar\varphi\big)$, see Appendix  \ref{ap:deformations}. We can then use the definition of the generator $\mathcal{F}^{\varepsilon}
$ of the fiberwise Hamiltonian deformation as given in \eqref{eq:generator} and obtain for $\langle\eta^{31}\rangle$
\begin{equation*}
\langle\eta^{31} \rangle = -\langle {\rm D}_\theta K(\bar\theta,\bar\varphi)^\top \Omega\big(K(\bar\theta,\bar\varphi)\big)\mathcal{F}_{T,\varepsilon}^\varepsilon\Big(\phi_{T,\varepsilon}\big(K(\theta,\varphi),\varphi\big),\bar\varphi\Big) \rangle.
\end{equation*}
Using the invariance of $\mathcal{K}$ and Lemma \ref{lem:moment_gen}, we have
\begin{align*}
\langle \eta^{31}\rangle &=-\langle {\rm D}_\theta K(\bar\theta,\bar\varphi)^\top \Omega\big(K(\bar\theta,\bar\varphi)\big)\mathcal{F}_{T,\varepsilon}^\varepsilon\big(K(\bar\theta,\bar\varphi),\bar\varphi\big) \rangle\\
&=-\langle {\rm D}_\theta K(\bar\theta,\bar\varphi)^\top{\rm D}_z\mathcal{M}^\varepsilon_{T,\varepsilon}\big(K(\bar\theta,\bar\varphi),\bar\varphi \big)^\top    \rangle\\
&=-\langle {\rm D}_\theta\Big(\mathcal{M}^\varepsilon_{T,\varepsilon}\big(K(\bar\theta,\bar\varphi),\bar\varphi \big) \Big)^\top \rangle = 0
\end{align*}
since we are taking averages of derivatives with respect to $\theta$.

For $\langle \eta^{32}\rangle$, let us use Lemma \ref{lem:moment_gen} and that 
\begin{equation*}
{\rm D}_z\phi_{T,\varepsilon}\big(K(\theta,\varphi),\varphi\big)\mathcal{X}_H(\theta,\varphi)=\mathcal{X}_H(\bar{\theta},\bar{\varphi})
\end{equation*}
to rewrite $\langle \eta^{32}\rangle$ as
\begin{equation*}
\langle \eta^{32}\rangle  =
 -\langle \mathcal{X}_H(\theta,\varphi)^\top {\rm D}_z \bigg(\mathcal{M}^\varepsilon_{T,\varepsilon}\Big(\phi_{T,\varepsilon}\big(K(\theta,\varphi),\varphi\big),\bar\varphi\Big)     \bigg)^\top  \rangle.
\end{equation*}
Let us transpose the previous expression and use the explicit form of $\mathcal{M}^\varepsilon_{T,\varepsilon}\Big(\phi_{T,\varepsilon}\big(K(\theta,\varphi),\varphi\big),\bar\varphi \Big)$ as given in Lemma \ref{lem:moment}. Hence, using the rotation operator $R_s(\varphi) = \varphi + \hat\alpha s$, we have
\begin{align*}
 \langle  \eta^{32}\rangle = -\langle   \int_0^T\bigg( {\rm D}_z \partial_\varepsilon &H_\varepsilon\big(\phi_{s,\varepsilon}\big(K(\theta,\varphi),\varphi\big),R_s(\varphi)\Big)\\
 & {\rm D}_z \phi_{s,\varepsilon}\big(K(\theta,\varphi),\varphi \big)\mathcal{X}_H(\theta,\varphi) \bigg) \ ds \rangle.
\end{align*}
We then expand $\mathcal{X}_H$ to rewrite $\langle\eta^{32}\rangle$ as
\begin{align*}
\langle\eta^{32} \rangle & = -\langle \int_0^T\bigg( {\rm D}_z \partial_\varepsilon H_\varepsilon\Big(\phi_{s,\varepsilon}\big(K(\theta,\varphi),\varphi\big),R_s(\varphi) \Big) {\rm D}_z \phi_{s,\varepsilon}\big(K(\theta,\varphi),\varphi \big)\\
&\phantom{=} \Big(X_{H_\varepsilon}\big(K(\theta,\varphi),\varphi\big) - {\rm D}_\varphi K(\theta,\varphi)\hat\alpha \Big) \bigg)\ ds \rangle
\end{align*}
and we use that 
\[
X_{H_\varepsilon}\big(\phi_{t,\varepsilon}(z,\varphi),R_t(\varphi) \big) = {\rm D}_z\phi_{t,\varepsilon}(z,\varphi) X_{H_\varepsilon}(z,\varphi) + {\rm D}_\varphi \phi_{t,\varepsilon}(z,\varphi) \hat\alpha,
\]
see Section \ref{sec:frm}, to obtain
\begin{align*}
\langle&\eta^{32}\rangle =\\
& \langle \int_0^T \bigg(-{\rm D}_z \partial_\varepsilon H_\varepsilon\Big(\phi_{s,\varepsilon}\big(K(\theta,\varphi),\varphi\big),R_s(\varphi) \Big) \frac{d}{ds}\Big(\phi_{s,\varepsilon}\big(K(\theta,\varphi),\varphi \big) \Big)  \\
& +  {\rm D}_z \partial_\varepsilon H_\varepsilon\Big(\phi_{s,\varepsilon}\big(K(\theta,\varphi),\varphi\big),R_s(\varphi) \Big)  {\rm D}_z \phi_{s,\varepsilon}\big(K(\theta,\varphi),\varphi \big) {\rm D}_\varphi K(\theta,\varphi)\hat\alpha \\ 
&+  {\rm D}_z \partial_\varepsilon H_\varepsilon\Big(\phi_{s,\varepsilon}\big(K(\theta,\varphi),\varphi\big),R_s(\varphi) \Big)  {\rm D}_\varphi \phi_{s,\varepsilon}\big(K(\theta,\varphi),\varphi \big)\hat\alpha
     \bigg) ds\rangle. 
\end{align*}
We can then rewrite it as

\begin{align*}
 \langle&\eta^{32}\rangle = \\
 & \langle \int_0^T \bigg(-{\rm D}_z \partial_\varepsilon H_\varepsilon\Big(\phi_{s,\varepsilon}\big(K(\theta,\varphi),\varphi\big),R_s(\varphi) \Big)\frac{d}{ds}\Big(\phi_{s,\varepsilon}\big(K(\theta,\varphi),\varphi \big) \Big)      \\
 & - {\rm D}_\varphi \partial_\varepsilon H_\varepsilon\Big(\phi_{s,\varepsilon}\big(K(\theta,\varphi),\varphi\big),R_s(\varphi)\Big)\hat\alpha \\
 & + {\rm D}_\varphi \partial_\varepsilon H_\varepsilon\Big(\phi_{s,\varepsilon}\big(K(\theta,\varphi),\varphi\big),R_s(\varphi) \Big)\hat\alpha \\ 
& +  {\rm D}_z \partial_\varepsilon H_\varepsilon\Big(\phi_{s,\varepsilon}\big(K(\theta,\varphi),\varphi\big),R_s(\varphi) \Big)  {\rm D}_z \phi_{s,\varepsilon}\big(K(\theta,\varphi),\varphi \big) {\rm D}_\varphi K(\theta,\varphi)\hat\alpha \\ 
&+  {\rm D}_z \partial_\varepsilon H_\varepsilon\Big(\phi_{s,\varepsilon}\big(K(\theta,\varphi),\varphi\big),R_s(\varphi) \Big)  {\rm D}_\varphi \phi_{s,\varepsilon}\big(K(\theta,\varphi),\varphi \big)\hat\alpha
     \bigg) ds\rangle
\end{align*}

to finally obtain
\begin{align*}
\langle\eta^{32}\rangle&= -\langle \int_0^T \frac{d}{ds}\bigg( \partial_\varepsilon H_\varepsilon \Big(\phi_{s,\varepsilon}\big(K(\theta,\varphi),\varphi\big),R_s(\varphi) \Big)\bigg) \ ds   \rangle \\
&\phantom{=} + \langle \int_0^T {\rm D}_\varphi \bigg(\partial_\varepsilon H_\varepsilon \Big(\phi_{s,\varepsilon}\big(K(\theta,\varphi),\varphi\big),R_s(\varphi) \Big)  \bigg)\ ds\rangle\hat\alpha \\
&= -\langle \partial_\varepsilon H_\varepsilon \big(K(\bar\theta,\bar\varphi),\bar\varphi \big) - \partial_\varepsilon H_\varepsilon \big(K(\theta,\varphi),\varphi \big)   \rangle \\
&\phantom{=} + \langle {\rm D}_\varphi\bigg( \mathcal{M}^\varepsilon_{T,\varepsilon}\Big(\phi_{T,\varepsilon}\big(K(\theta,\varphi),\varphi\big),\bar\varphi\Big)\bigg)  \rangle \hat\alpha = 0,
\end{align*}
where we used that $\partial_\varepsilon H_\varepsilon\big(K(\theta,\varphi),\varphi\big)$ and $\partial_\varepsilon H_\varepsilon\big(K(\bar\theta,\bar\varphi),\bar\varphi\big)$ have the same average and that the average of derivatives with respect to $\varphi$ is zero.

\end{proof}

\bibliographystyle{plain}
\bibliography{references} 

\end{document}